\documentclass[pdflatex,sn-mathphys]{sn-jnl}% Math and Physical Sciences Numbered Reference Style
\usepackage{graphicx}%
\usepackage{multirow}%
\usepackage{amsmath,amssymb,amsfonts}%
\usepackage{amsthm}%
\usepackage{mathrsfs}%
\usepackage[title]{appendix}%
\usepackage{xcolor}%
\usepackage{textcomp}%
\usepackage{manyfoot}%
\usepackage{booktabs}%
\usepackage{algorithm}%
\usepackage{algorithmicx}%
\usepackage{algpseudocode}%
\usepackage{listings}%
\theoremstyle{thmstyleone}%
\newtheorem{theorem}{Theorem}%  meant for continuous numbers
\newtheorem{proposition}[theorem]{Proposition}% 

\newtheorem{corollary}{Corollary}

\newtheorem{lemma}{Lemma}

\theoremstyle{thmstyletwo}%
\newtheorem{remark}{Remark}%

\theoremstyle{thmstylethree}%
\newtheorem{definition}{Definition}%

\usepackage{placeins}

\usepackage{array}

\allowdisplaybreaks

\begin{document}

\title[A description of the Laguerre-Hahn orthogonal polynomials of class zero revisited]{A description of the Laguerre-Hahn orthogonal polynomials of class zero revisited}

%%=============================================================%%
%% GivenName	-> \fnm{Joergen W.}
%% Particle	-> \spfx{van der} -> surname prefix
%% FamilyName	-> \sur{Ploeg}
%% Suffix	-> \sfx{IV}
%% \author*[1,2]{\fnm{Joergen W.} \spfx{van der} \sur{Ploeg} 
%%  \sfx{IV}}\email{iauthor@gmail.com}
%%=============================================================%%

\author[1]{\fnm{Mohamed} \sur{Khalfallah}}\email{mohamed.khalfallah@fsm.rnu.tn}

\author[2]{\fnm{Pascal} \sur{Maroni}} %\email{}
\equalcont{17th January 1933 -- 16th January  2024}

\author*[3]{\fnm{Zélia} \sur{da Rocha}}\email{mrdioh@fc.up.pt}

\affil[1]{\orgdiv{Department of Mathematics}, 
\orgname{Faculty of Sciences of Monastir, University of Monastir}, 
\orgaddress{
%\street{}, 
\city{Monastir}, 
\postcode{5019}, 
%\state{}, 
\country{Tunisia}}}

\affil[2]{\orgdiv{Laboratoire Jacques-Louis Lions}, 
\orgname{Sorbonne Université, CNRS}, 
\orgaddress{
\street{Boite courrier 187; 4, place Jussieu}, 
\city{Paris}, 
\postcode{75252 Paris cedex 05}, 
%\state{}, 
\country{France}}}

\affil[3]{\orgdiv{Departamento de Matemática, Centro de Matemática da Universidade do Porto (CMUP)}, 
\orgname{Faculdade de Ciências da Universidade do Porto}, 
\orgaddress{
\street{Rua do Campo Alegre n. 687}, 
\city{Porto}, 
\postcode{4169-007}, 
%\state{}, 
\country{Portugal}}}

%%==================================%%
%% Sample for unstructured abstract %%
%%==================================%%

\abstract{This paper has a threefold aim. On the one hand, we provide a complete description of Laguerre–Hahn forms of class zero. This fills a gap in the literature: more precisely, up to an affine change of variables, there are ten families, including two new ones analogous to the classical Bessel family. On the other hand, we establish the structure relations for all these families, correcting those that have previously been reported in the literature. At last, as an application, using an algorithm recently obtained, with the aid of symbolic computations, we derive four new structure relations and a new fourth-order differential equation for one of the new families analogous to Bessel.}

\keywords{Orthogonal polynomials, Laguerre-Hahn forms, structure relations, fourth-order differential equation, Bessel polynomials, symbolic computations}

%%\pacs[JEL Classification]{D8, H51}

\pacs[MSC 2020 Classification]{34, 33C45, 33D45, 42C05, 33F10, 68W30, 62-09, 33F05, 65D20, 68-04}

\maketitle

%%% ----------------------------------------------------------------------
%\maketitle
%%% ----------------------------------------------------------------------
%\tableofcontents%%%%%%%%%%%%%%%%%%%%%%%%%%%%%%%%
%\tableofcontents

\newpage
%%%%%%%%%%%%%%%%%%%%%%%%%%%%%%%%%%%
\section{Introduction}
%%%%%%%%%%%%%%%%%%%%%%%%%%%%%%%%%%%

The theory of orthogonal polynomials related to Stieltjes functions $S$ satisfying 
a Riccati equation with polynomial coefficients of the  type
\begin{equation}\label{intro-Riccati}
\Phi S' = B S^2 + C S + D, \quad \Phi \neq 0,
\end{equation}
has a long and rich 
history, rooted in the framework of Pad\'e rational approximation and Jacobi continued fractions associated
with analytic functions at infinity \cite{Rebocho-2026}.
The set of orthogonal polynomial sequences arising in this context is known as the 
Laguerre-Hahn families~\cite{Magnus-1983}.
An algebraic and topological framework for Laguerre-Hahn linear functionals 
was developed in \cite{Dini-these-1988, Maroni-1991}, establishing in 
particular, the equivalence between the above Riccati equation \eqref{intro-Riccati} satisfied by the formal 
Stieltjes function
\begin{equation*}
S(z):=-\sum_{n\ge0}\frac{(u)_n}{z^{n+1}},
\end{equation*}
where $(u)_n$ denotes the $n$-th moment of the corresponding linear functional $u$, 
and the following functional equation
\begin{equation}\label{intro-Riccati2}
(\Phi u)' + \psi u + B(x^{-1}u^2)=0, \quad \psi = -\Phi' - C.
\end{equation}
This result constitutes one of the cornerstones of the theory. A structure relation 
characterizing Laguerre-Hahn polynomials was also established in 
%\cite[Chapter~IV, Theorem~1.1]{Dini-these-1988}; 
\cite{Dini-these-1988};
more precisely, each Laguerre–Hahn polynomial 
$P_n$, $n \geq 0$, satisfies the so-called structure relation
\begin{equation*}
\Phi(x)P_{n+1}^{\prime}(x) - B(x)P_n^{(1)}(x) = \sum_{\mu=n-s}^{n+d} 
\theta_{n,\mu} P_\mu(x), \quad n \geq s + 1,
\end{equation*}
where $\Phi$ and $B$ are the polynomials defined in \eqref{intro-Riccati2},  
$\{P_n^{(1)}\}_{n \geq 0}$ denotes the sequence of associated orthogonal 
polynomials of order~$1$ of $\{P_n\}_{n \geq 0}$, and $d=\max(t,q)$, $s=\max(p-1,d-2)$, being $t$, $p$, and $q$ the degrees of  $\Phi$, $\psi$, and $B$, respectively. In particular, 
$\{P_n\}_{n \geq 0}$ satisfies the following main structure relation
\begin{align*}
\Phi(x) P_{n+1}'(x) - B_0(x) P_n^{(1)}(x) = \frac{1}{2}\big(C_{n+1}(x) 
- C_0(x)\big) P_{n+1}(x)- \gamma_{n+1} D_{n+1}(x) P_n(x), 
\end{align*}
for any $n\geq0$, where $C_n$ and $D_n$ are polynomials with coefficients depending on $n$, 
satisfying $
\deg C_n \leq s + 1,~\deg D_n \leq s$.\\

From a structural standpoint, the Laguerre-Hahn set occupies a central place 
in the theory of orthogonal polynomials, as it encompasses a wide range of polynomial 
families studied in the literature. Notably, it contains the semiclassical class as 
a special case, recovered when $B = 0$, while the classical families arise under the 
further conditions $\deg(\Phi) \leq 2$ and $\deg(\psi) = 1$ \cite{Maroni-1987}. 
Moreover, every linear functional of degree three belongs to the Laguerre-Hahn 
class \cite{Salah-Maroni-2000}, although the converse does not hold in general.

Laguerre-Hahn orthogonal polynomials can be constructed through various transformations 
of semiclassical orthogonal polynomials, either by modifying the corresponding Stieltjes 
function or by perturbing the recurrence coefficients 
\cite{Askey-1984, Dehesa-1990, Maroni-1991, Ronveaux-1990}. This approach gives rise 
to several well-known families, including associated, co-recursive, co-dilated, and 
co-modified polynomial sequences 
\cite{Belmehdi-1989, Belmehdi-Ronveaux-1991, Letessier-1994, Ronveaux-1990}. 
Despite this wealth of examples, a complete classification of Laguerre-Hahn orthogonal polynomials remains an open problem.
Moreover, the explicit determination of Laguerre-Hahn linear functionals is, in general, a highly non-trivial task. It should be noted that Laguerre-Hahn linear functionals of class zero have been investigated in~\cite{Bouakkaz-these,Bouakkaz-Maroni-1991}, both through the functional equation and the second-order recurrence relation. However, it should be pointed out that the resolution of system~\eqref{r0}--\eqref{Fn-System} carried out in~\cite{Bouakkaz-these,Bouakkaz-Maroni-1991} remains incomplete. Furthermore, the coefficients of the Laguerre-Hahn main structure relation associated with the eight families provided in~\cite{Bouakkaz-these} turn out to be incorrectly stated. \\

Recently, in \cite{khalfallah2026}, a general constructive method was developed to derive four new structure relations and a fourth-order linear differential equation satisfied by Laguerre-Hahn orthogonal polynomials. 
The approach employed was based on a systematic combination of structure relations, their derivatives, and algebraic elimination techniques, leading to a compact representation of the differential equation as a determinant of order 4. In the sequel, the coefficients of the differential equation were made explicit.
Moreover, the method was systematized in the algorithm {\it 4oDELH} (fourth-order differential equation for Laguerre-Hahn). Its implementation in the \textit{Mathematica}$^\circledR$ language has already produced the symbolic results for all Laguerre-Hahn families of class~0. To accomplish this task, the structure coefficients mentioned in the previous paragraph were required.
The results for the two cases analogous to Hermite were presented in \cite{khalfallah2026}. In this work, we provide, in Appendix~\ref{Section5}, the results corresponding to the second case analogous to Bessel. The remaining families will be addressed in forthcoming articles.\\

In the sequel of  \cite{khalfallah2026}, the aim of the present work is therefore threefold. On one hand, we carry out a complete resolution of system~\eqref{r0}--\eqref{Fn-System}, leading to two additional families analogous to Bessel that have not been reported elsewhere in the literature. On the other hand, we determine the coefficients of the Laguerre-Hahn structure relation for all ten resulting families, correcting in particular those that were stated in~\cite{Bouakkaz-these}. Last but not least, we provide four new structure relations and the fourth-order linear differential equation satisfied by one of the new families analogous to Bessel.\\

The remainder of this paper is organized as follows. Section~\ref{Section2} provides a concise summary of the background material needed throughout, covering the core definitions and principal results on orthogonal polynomials, moment functionals, and Laguerre-Hahn forms. In Section~\ref{Section3}, based on the system governed by the recurrence coefficients $\beta_n$ and $\gamma_{n+1}$ for a Laguerre-Hahn sequence of class zero, we establish a general result (Theorem~\ref{theorem-SR}) that provides the coefficients of the main structure relation. Section~\ref{Section4} is devoted to a complete description of Laguerre-Hahn forms of class zero via the functional equation and the second-order recurrence relation satisfied by such polynomials, as well as the coefficients of the main structure relation, by means of the general result obtained in the preceding section. More precisely, up to an affine change of variables, ten families are identified, including two new ones that are analogous to the Bessel case. Finally, Section~\ref{Section5} is dedicated to an application of the general theoretical framework developed in~\cite{khalfallah2026}. Specifically, we derive four additional structure relations and a fourth-order linear differential equation satisfied by the orthogonal polynomials corresponding to Case 2, analogous to Bessel. At last, we present some final remarks.

%\newpage
%%%%%%%%%%%%%%%%%%%%%%%%%%%%%%%%%%%
\section{Notation and basic background}\label{Section2} 
%%%%%%%%%%%%%%%%%%%%%%%%%%%%%%%%%%%

In this section, we present some basic definitions, notations, and results that are used throughout this paper.
%%%%%%%%%%%%%%%%%%%%%%%%%%%%%%%%%%%%%%%%%%%%%%%%%%%%%%%%%
\subsection{Basic tools}
Let $\mathcal{P}$ denote the vector space of polynomials with complex coefficients, and let $\mathcal{P'}$ be its algebraic dual space.  
The elements of $\mathcal{P'}$ will be referred to as \emph{forms} (or linear functionals).  
The pairing between $\mathcal{P}$ and $\mathcal{P'}$ is expressed through the duality brackets $\langle \cdot, \cdot \rangle$.  
For a form $u \in \mathcal{P'}$, the sequence of complex numbers $(u)_n,~ n \geq 0$, is called the \emph{moment sequence} of $u$ relative to the monomial basis $\{x^n\}_{n \geq 0}$.  
In particular, the $n$-th moment is given by $(u)_n := \langle u, x^n \rangle$, 
so that $u$ is uniquely determined by the sequence of its moments.  

In the following, we shall refer to a sequence $\{P_n\}_{n \geq 0}$ as a \emph{polynomial sequence} (PS) if $\deg P_n = n$ for all $n \geq 0$. A \emph{monic polynomial sequence} (MPS) is a PS in which each polynomial has a leading coefficient equal to one.  
If ${\{P_{n}\}}_{n \geq 0}$ is a MPS, there exists a unique sequence
$\{u_n\}_{n\geq 0}$, $u_n\in\mathcal{P}^{\prime}$, called the dual sequence of $\{P_{n}\}_{n\geq 0}$, such that,
\begin{equation}\label{SucDual}
\langle u_{n},P_{m}\rangle=\delta_{n,m}, \quad n,m\geq 0.
\end{equation}
We say that a sequence of forms $\{v_n \}_{ n \geq 0}$ is normalised if and only if $\left( v_n  \right)_n = 1, $ $n \geq 0$, and if $n \geq 1$, then $\left( v_n  \right)_m = 0, $ $m=0, \ldots, n-1$. Thus, the dual sequence  $\{u_n\}_{n\geq 0}$ is normalised. The first form $u_0$ is called the canonical form of ${\{P_{n}\}}_{n \geq 0}$.

We now introduce some operations on $\mathcal{P'}$ following \cite{Maroni-1991}.  
For $c \in \mathbb{C},~ f,p \in \mathcal{P}$, and $u \in \mathcal{P'}$, we define
\begin{align*}
&\langle fu, p \rangle = \langle u, fp \rangle, \quad
\langle u', p \rangle = -\langle u, p' \rangle,  \\
&\langle (x-c)^{-1}u, p \rangle = \langle u, \theta_c p \rangle
= \left\langle u, \frac{p(x)-p(c)}{x-c} \right\rangle.
\end{align*}
Given $f \in \mathcal{P}$ and $u \in \mathcal{P'}$, the product $uf$ is defined by  $(u f)(x) := \left\langle u, \displaystyle\frac{x f(x)-\zeta f(\zeta)}{x-\zeta} \right\rangle $.

This definition allows us to introduce the \emph{Cauchy product} of two forms $u,v \in \mathcal{P'}$ by 
\[
\langle uv, f \rangle := \langle u, v f \rangle, \quad f \in \mathcal{P}.
\]

In addition, we make use of the formal Stieltjes function associated with 
$u \in \mathcal{P}'$, defined by  \cite{Maroni-1991}
\[
S(u)(z) := - \sum_{n \geq 0} \frac{(u)_n}{z^{n+1}},
\]
which provides an alternative representation of the moment sequence $\{(u)_n\}_{n \geq 0}$.  
Since the moments uniquely determine $u$, the function $S(u)(z)$ does so as well.

\medskip

A linear functional $u$ is called \emph{regular} (or \emph{quasi-definite}) if there exists a sequence of polynomials  $\{P_n\}_{n \geq 0}$ such that \cite{Chihara-1978}
\begin{equation*}
\langle u, P_n P_m \rangle = r_n \, \delta_{n,m}, \quad n, m \geq 0,
\end{equation*}
where $\{r_n\}_{n \geq 0}$ is a sequence of nonzero complex numbers and $\delta_{n,m}$ denotes the Kronecker symbol.
The sequence $\{P_n\}_{n\geq 0}$ is then said orthogonal with respect to $u$. 
Then, necessarily, $\{P_n\}_{n \geq 0}$ is a PS, $u=(u)_0u_0$, and $\{P_n\}_{n \geq 0}$ and $u$ can be normalized. In the sequel, we shall consider that each $P_n(x)$ is monic, and $(u)_0=1$ (i.e. $u=u_0$).
Henceforth, a monic orthogonal
polynomial sequence $\{P_n\}_{n\geq 0}$ will be indicated as MOPS. 

It is well known that an MOPS is characterized by the following second-order linear recurrence relation and initial conditions, respectively \cite{Chihara-1978}
\begin{eqnarray}
&& P_0(x)=1\  ,\quad  P_1(x)=x-\beta_0, \label{ic_TTRR}\\
&& P_{n+2}(x)=(x-\beta_{n+1})P_{n+1}(x)-\gamma_{n+1}P_{n}(x),~~n\geq 0, \label{TTRR}
\end{eqnarray}
being $\{\beta_n\}_{n\geq 0}$ and $\{\gamma_{n+1}\}_{n\geq 0}$ sequences of complex numbers such that $\gamma_{n+1}\neq0$ for all $ n\geq 0$.

Let  $\{P_n^{(1)}\}_{n\geq0}$ be the associated polynomial sequence of order one of the MPS  $\{ P_n\}_{n\geq0}$ with respect to the canonical form $u=u_0$. It is well known that \cite{Chihara-1978}
$$
 P_n^{(1)}(x):=(u\theta_0P_{n+1})(x)=\bigg\langle  u,\frac{P_{n+1}(x)- P_{n+1}(\xi)}{x-\xi}  \bigg\rangle.
$$
The Stieltjes function of $u^{(1)}$ is expressed in terms of that of $u$ as  \cite{Maroni-1991} 
$$
\gamma_1 S\left(u^{(1)}\right)(z)=-\frac{1}{S(u)(z)}-\left(z-\beta_0\right).
$$
More generally, the sequence of associated polynomials of order $(r+1)$, $r\geq 1$, is  defined by recursion
$$
P_n^{(r+1)}(x)=\big(  P_n^{(r)} \big)^{(1)}(x),\quad  u_n^{(r+1)}=\big(  u_n^{(r)} \big)^{(1)},\quad n, r \geq 0.
$$

If $\{ P_n\}_{n\geq0}$ is a MOPS with respect to the form $u$, then for $r\in\mathbb{N}$, the associated sequence of polynomials of order $r$,  $\left\{ P_n^{(r)}\right\}_{n\geq0}$, is also orthogonal with respect to the form $u^{(r)}$ and satisfies the following recurrence relation 
\begin{eqnarray}
&&P_0^{(r)}(x)=1, \quad P_1^{(r)}(x)=x-\beta_0^{(r)},\label{ic_ASSTTRR}\\
&&P_{n+2}^{(r)}(x)=(x-\beta_{n+1}^{(r)})P_{n+1}^{(r)}(x)-\gamma_{n+1}^{(r)}P_n^{(r)}(x),\quad n\geq0,  \label{ASSTTRR}
\end{eqnarray}
where 
\begin{equation*}
\beta_{n}^{(r)}=\beta_{n+r}, \quad \gamma_{n+1}^{(r)}=\gamma_{n+1+r},\quad n\geq0.  
\end{equation*}

We recall the definition of the $r$-perturbed sequence $\{\widetilde{P}_n\}_{n\geq 0}$, for a fixed integer $r\geq 0$, associated with a MOPS $\{P_n\}_{n\geq 0}$, as introduced in \cite{Maroni-1991}. It is an MOPS satisfying the following second-order recurrence relation 
\[
\begin{array}{l}
\widetilde{P}_0(x)=1,\quad \widetilde{P}_1(x)=x-\widetilde{\beta}_0,\\[2pt]
\widetilde{P}_{n+2}(x)=(x-\widetilde{\beta}_{n+1})\widetilde{P}_{n+1}(x)-\widetilde{\gamma}_{n+1}\widetilde{P}_n(x),\quad n\geq0,
\end{array}
\]
with
\[
\begin{aligned}
&\widetilde{\beta}_0 = \beta_0 + \mu_0,\\[2pt]
&\widetilde{\beta}_n = \beta_n + \mu_n,\quad \mu_n\in\mathbb{C};\qquad 
\widetilde{\gamma}_n = \lambda_n\gamma_n,\quad \widetilde{\gamma}_n\in\mathbb{C}\setminus\{0\},\quad 1\leq n\leq r,\\[2pt]
&\widetilde{\beta}_n = \beta_n,\quad \widetilde{\gamma}_n = \gamma_n,\quad n\geq r+1.
\end{aligned}
\]
We assume that either $\mu_r\neq 0$ or $\lambda_n\neq 1$. The so-called co-recursive case corresponds to a perturbed case of order $0$. Using the notations $\mu:=(\mu_1,\ldots,\mu_r)$, $\lambda:=(\lambda_1,\ldots,\lambda_r)$, $r\geq 1$, we write
\[
\widetilde{P}_n(x)=P_n\left(\mu_0;\,{\mu\atop\lambda}\,;r;x\right),\qquad n\geq0,
\]
and the sequence $\{\widetilde{P}_n\}_{n\geq0}$ is orthogonal with respect to the perturbed form 
$$\widetilde{u}:=u\left(\mu_0;\,{\mu\atop\lambda}\,;r\right).$$

%%%%%%%%%%%%%%%%%%%%%%%%%%%%%%%%%%%
\subsection{Laguerre-Hahn forms}
%%%%%%%%%%%%%%%%%%%%%%%%%%%%%%%%%%%

\begin{definition} \cite{Dzoumba-these-1985,Magnus-1983,Maroni-1983}
A regular form $u$, with $(u)_0=1$, is said to be a Laguerre-Hahn form if its formal Stieltjes function satisfies the Riccati equation
\begin{equation}\label{Riccati}
A(z)S'(u)(z)=B(z)S^2(u)(z)+C(z)S(u)(z)+D(z),
\end{equation}
where $A$, $B$, $C$, and $D$ are polynomials.\\
The sequence \(\{P_n\}_{n\geq 0}\) orthogonal with respect to \( u \) is also called a Laguerre-Hahn sequence. 
\end{definition}

\begin{remark} \cite{Maroni-1991}
If $A=0$ identically, the form $u$ is classified as a second-degree form. 
If $A$ is not identically zero, it may be assumed, without loss of generality, 
that it is monic; and we let $A:=\Phi$. Under this normalization, the condition $B \neq 0$ 
characterizes $u$ as a strict Laguerre-Hahn form, whereas the case $B = 0$ 
corresponds to a semiclassical form.
\end{remark}

There are several characterizations of Laguerre-Hahn forms. Some of them are listed in the following result.
\begin{proposition} \cite{Alaya-Maroni,Bouakkaz-Maroni-1991,Dini-these-1988,Maroni-1991} 
Let $u$ be a regular and normalized form, i.e.,
 $(u)_0=1$, and let $\{P_n\}_{n\geq 0}$ be its corresponding MOPS. The following statements are equivalent
\begin{enumerate}
\item[(i)] $u$ is a Laguerre-Hahn form satisfying \eqref{Riccati} with $A=\Phi$.
\item[(ii)]  \cite{Dini-these-1988} $u$ satisfies the functional equation
\begin{equation}\label{Laguerre-Hahn-EF}
(\Phi u)'+\psi u+B(x^{-1}u^2)=0,
\end{equation}
where $\Phi$, $B$, $C$, and $D$ are the polynomials in \eqref{Riccati} and
\begin{align*}
&C=-\Phi'-\psi,\\
&D=-(u\theta_0\Phi)'-(u\theta_0\psi)-(u^2\theta_0^2B).
\end{align*}
\item[(iii)] \cite{Dini-these-1988} Each polynomial \(P_n, n \geq 0\), verifies the so-called structure relation
\begin{equation}\label{Dini_St_Rel}
\Phi(x)P_{n+1}^{\prime}(x) - B(x)P_n^{(1)}(x) = \sum_{\mu=n-s}^{n+d} \theta_{n,\mu} P_\mu(x), \quad n \geq s + 1,
\end{equation}
where \(\Phi\) and \(B\) are the polynomials defined in (i),  \(\{P_n^{(1)}\}_{n \geq 0}\) is the sequence of associated orthogonal polynomials of order 1 of \(\{P_n\}_{n \geq 0}\),  
$d=\max(t,q)$, $s=\max(p-1,d-2)$, being $t$, $p$, and $q$ the degrees of  $\Phi$, $\psi$, and $B$, respectively. 
\end{enumerate}
\end{proposition}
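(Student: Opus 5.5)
The plan is to prove the cycle (i)$\Leftrightarrow$(ii) and (ii)$\Rightarrow$(iii)$\Rightarrow$(ii), working throughout at the level of moments. All identities are checked by pairing with monomials, or equivalently by comparing coefficients of the formal series in $z^{-1}$.

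\textbf{(i)$\Leftrightarrow$(ii).} First I would establish three transfer rules from the definitions of the operations on $\mathcal{P}'$:
\[
S(u')(z)=S'(u)(z),\qquad S(fu)(z)=f(z)S(u)(z)+(u\theta_0 f)(z),\qquad S(uv)(z)=-S(u)(z)S(v)(z),
\]
together with $S(x^{-1}w)(z)=z^{-1}S(w)(z)$ modulo the polynomial and constant corrections coming from $\theta_0$. Applying $S$ to $(\Phi u)'+\psi u+B(x^{-1}u^2)$ then gives
\[
\Phi S'+(\Phi'+\psi)S-BS^2+\text{(polynomial)} .
\]
The polynomial part is exactly $-D$ with $D=-(u\theta_0\Phi)'-(u\theta_0\psi)-(u^2\theta_0^2B)$ and $C=-\Phi'-\psi$. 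Since a form is determined by its Stieltjes function, the functional equation holds if and only if the Riccati equation \eqref{Riccati} holds. The direction (i)$\Rightarrow$(ii) uses the same identities read backwards: set $\psi=-\Phi'-C$, and note that the polynomial part of the Riccati equation is forced to equal the given $D$, because the singular part in $z^{-1}$ determines it.

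\textbf{(ii)$\Rightarrow$(iii).} I would expand $\Phi P_{n+1}'-BP_n^{(1)}$, which has degree at most $n+d$, in the basis $\{P_\mu\}$. The coefficients are
\[
\theta_{n,\mu}=\langle u,P_\mu^2\rangle^{-1}\bigl\langle u,(\Phi P_{n+1}'-BP_n^{(1)})P_\mu\bigr\rangle .
\]
The goal is to show that they vanish for $\mu<n-s$. For this I would use $\langle u,\Phi P_{n+1}'P_\mu\rangle=-\langle(\Phi u)',P_{n+1}P_\mu\rangle-\langle u,\Phi P_{n+1}P_\mu'\rangle$ and substitute the functional equation to replace $(\Phi u)'$. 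This produces three contributions: $\langle \psi u,P_{n+1}P_\mu\rangle$, which vanishes when $\mu+p<n+1$; the term $\langle u,\Phi P_{n+1}P_\mu'\rangle$, which vanishes when $\mu-1+t<n+1$; and a term $\langle B x^{-1}u^2,P_{n+1}P_\mu\rangle$. The key identity needed is that $\langle x^{-1}u^2,fP_{n+1}\rangle$ equals $\langle u,\,f'$-type expression$\cdot P_n^{(1)}\rangle$ plus orthogonality-killed terms. Concretely, I would use $u\theta_0 P_{n+1}=P_n^{(1)}$ and $\langle u^2\theta_0,g\rangle=\langle u,u\theta_0(xg)\rangle$ to show that the $B$-contribution cancels $\langle u,BP_n^{(1)}P_\mu\rangle$ up to a quantity vanishing for $\mu+q<n+2$. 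Collecting these thresholds gives $\theta_{n,\mu}=0$ for $\mu<n-s$ with $s=\max(p-1,d-2)$, provided $n\geq s+1$.

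\textbf{(iii)$\Rightarrow$(ii).} I would reverse the computation: pair the structure relation with $P_\mu$ for $\mu<n-s$ to obtain $\langle (\Phi u)'+\psi u+B(x^{-1}u^2),P_{n+1}P_\mu\rangle=0$ for suitable $\psi$. Here $\psi$ is defined from the finitely many initial relations, by choosing $\psi$ so that the form $w:=(\Phi u)'+\psi u+B(x^{-1}u^2)$ satisfies $\langle w,P_m\rangle=0$ for small $m$. Expanding $w=\sum_m\langle w,P_m\rangle u_m$ in the dual basis \eqref{SucDual}, the relations for $n\geq s+1$ kill all coefficients beyond the finite range. The remaining finite set is absorbed into the choice of $\psi$, since $\lambda_m u_m=(\text{poly})\,u$ for a regular form.

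\textbf{Main obstacle.} The hardest step is the delicate identity relating the Cauchy-product term $\langle B(x^{-1}u^2),P_{n+1}P_\mu\rangle$ to $\langle u,BP_n^{(1)}P_\mu\rangle$. This identity is where the associated polynomials enter, and it is what pins down the exact lower index $n-s$. I would handle it by writing $P_\mu B$ as a polynomial in $x$ and using the definition $(uf)(x)=\langle u,(xf(x)-\zeta f(\zeta))/(x-\zeta)\rangle$ termwise.
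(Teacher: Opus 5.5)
The paper gives no proof of this proposition; it is quoted from the cited literature (Dini's thesis, Maroni 1991). Your outline is essentially the standard argument and it works. Your thresholds in (ii)$\Rightarrow$(iii) are correct: the conditions $\mu+p<n+1$, $\mu+t-1<n+1$ and $\mu+q<n+2$ together amount exactly to $\mu<n-s$.

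One transfer rule is wrong as stated. With the paper's Cauchy product, $(uv)_n=\sum_{k=0}^{n}(u)_k(v)_{n-k}$, so $S(uv)(z)=-z\,S(u)(z)S(v)(z)$, not $-S(u)S(v)$. Also, $S(x^{-1}w)(z)=z^{-1}S(w)(z)$ holds exactly, with no $\theta_0$ corrections, because $\langle x^{-1}w,1\rangle=0$ and $(x^{-1}w)_{n+1}=(w)_n$. Taken literally, your two rules give $-z^{-1}BS^2$, which would break the match with the Riccati equation. The correct pair gives $S(x^{-1}u^2)=-S(u)^2$, which is what your displayed expression actually requires. In (i)$\Rightarrow$(ii), the precise reason $D$ is forced is this: $S(w)\in z^{-1}\mathbb{C}[[z^{-1}]]$ equals a polynomial, so both must vanish.

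For the step you call the main obstacle, termwise expansion is unnecessary. Write $\langle x^{-1}u^2,g\rangle=\langle u,u\theta_0 g\rangle$ and split
\[
\frac{f(x)P_{n+1}(x)-f(\zeta)P_{n+1}(\zeta)}{x-\zeta}=f(x)\,\frac{P_{n+1}(x)-P_{n+1}(\zeta)}{x-\zeta}+P_{n+1}(\zeta)\,\frac{f(x)-f(\zeta)}{x-\zeta},
\]
with $f=BP_\mu$, pairing with $u$ in $\zeta$. The first term gives $f\,P_n^{(1)}$; the factor is $f$ itself, not an expression in $f'$. The second term vanishes by orthogonality once $q+\mu-1\le n$.

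In (iii)$\Rightarrow$(ii) only $\mu=0$ is needed. Take
\[
\psi=-\sum_{m=0}^{s+1}\langle u,P_m^2\rangle^{-1}\langle(\Phi u)'+B(x^{-1}u^2),P_m\rangle P_m .
\]
Then $\langle w,P_m\rangle=0$ for $m\le s+1$ by construction. For $n\ge s+1$ you get $\langle w,P_{n+1}\rangle=-\theta_{n,0}=0$, using $\deg\psi\le s+1$ and $q-1\le s+1$. Since $p$ is not available a priori in this direction, (iii) must be read with $s\ge d-2$ given. Your construction then yields $p-1\le s$, which is consistent.
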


%%%%%%%%%%%%%%%%%%%%%%%%%%%%%%%%%%
It is worth noting that the above functional equation \eqref{Laguerre-Hahn-EF} is not uniquely determined.
Indeed, if $u$ is a Laguerre-Hahn form and $\chi$ is an arbitrary polynomial, then $u$ also satisfies
\[
(\chi \Phi u)' + \bigl(\chi \psi - \chi' \Phi\bigr) u
+ (\chi B)\,(x^{-1}u^{2}) = 0.
\]
This observation motivates the following definition.
\begin{definition} \cite{Alaya-Maroni, Bouakkaz-Maroni-1991}
The class of a Laguerre-Hahn form $u$ is the non-negative integer defined as
$$
s:=\min\max\big\{\deg{\psi}-1, \max\{\deg{\Phi}, \deg{B}\}-2\big\},
$$
where the minimum is taken among all polynomials $\Phi, \psi$ and $B$ such that $u$
satisfies \eqref{Laguerre-Hahn-EF}.
\end{definition}

Taking into account that the class of a Laguerre-Hahn form is crucial to state a hierarchy of such families, we need to give a criterion to characterize it.
\begin{proposition}\label{proposition-simplification} \cite{Alaya-Maroni,Bouakkaz-Maroni-1991}
Let $u$ be a Laguerre-Hahn form and let $\Phi$ and $\psi$ be non-zero polynomials 
such that \eqref{Laguerre-Hahn-EF} holds.
 Let
 \begin{equation}\label{s_class_LH}
 s=\max\big\{\deg{\psi}-1, \max\{\deg{\Phi}, \deg{B}\}-2\big\}.
 \end{equation} 
 Then $s$ is the class of $u$ if and only if
\begin{equation*}
\prod_{c\in\mathcal{Z}_{\Phi}}{\Big(|\Phi'(c)+\psi(c)|+|B(c)|+|\langle u, \theta_c^2\Phi+\theta_c\psi+u\theta_0\theta_c B\rangle|\Big)}\neq0,
\end{equation*}
where $\mathcal{Z}_{\Phi}$ denotes the set of zeros of $\Phi$.
\end{proposition}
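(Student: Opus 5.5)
The idea is to show that a factor $(x-c)$, with $c\in\mathcal{Z}_\Phi$, can be divided out of the functional equation \eqref{Laguerre-Hahn-EF} exactly when the corresponding factor of the product vanishes. Both directions of the equivalence then follow from this single local statement, applied to each zero $c$ of $\Phi$.

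\textbf{Division formula.} Fix $c\in\mathcal{Z}_\Phi$ and write $\Phi(x)=(x-c)\Phi_c(x)$. I will start from the identity
\[
(x-c)\bigl((x-c)^{-1}w\bigr)=w-\langle w,1\rangle\delta_c .
\]
I also need a division formula for the Cauchy product, namely $x^{-1}u^2$ multiplied by $B$ and then divided by $(x-c)$. Both should follow directly from the definitions of $\theta_c$ and of $uf$. The goal is to show that $u$ satisfies
\[
(\Phi_c u)'+\psi_c u+B_c(x^{-1}u^2)=0,
\]
with $\Phi_c$ as above, $\psi_c=\theta_c(\Phi+\psi)$ (more precisely $\theta_c(\Phi'+\psi)$ up to a correction term), and $B_c=\theta_c B$, provided $\Phi'(c)+\psi(c)=0$, $B(c)=0$, and the third quantity $\langle u,\theta_c^2\Phi+\theta_c\psi+u\theta_0\theta_cB\rangle$ vanishes. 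To do this I write the left-hand side $T$ of \eqref{Laguerre-Hahn-EF} as $(x-c)\widetilde T$ plus a remainder supported at $c$. That remainder is a combination of $\delta_c$ and $\delta_c'$. Pairing $T=0$ with the polynomials $1$ and $\theta_c p$ shows that the coefficient of $\delta_c'$ is, up to sign, $\Phi'(c)+\psi(c)$ together with a contribution $B(c)$. The coefficient of $\delta_c$ is the bracket $\langle u,\theta_c^2\Phi+\theta_c\psi+u\theta_0\theta_cB\rangle$.

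\textbf{Forward direction.} Suppose the product is $0$, so that for some $c$ all three quantities vanish. Then $\widetilde T=0$ is a Laguerre–Hahn equation with $\deg\Phi_c=\deg\Phi-1$, $\deg\psi_c\le\deg\psi-1$ and $\deg B_c\le \deg B-1$. Its value of $\max\{\deg\psi_c-1,\max\{\deg\Phi_c,\deg B_c\}-2\}$ is at most $s-1$. This contradicts the minimality of $s$. Hence, if $s$ is the class, the product is nonzero.

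\textbf{Converse direction.} Here I would argue by contradiction: suppose $s$ is not the class, and consider a minimal equation $(\Phi_0u)'+\psi_0u+B_0(x^{-1}u^2)=0$. The first step is to show that any equation satisfied by $u$ is obtained from this minimal one by multiplication by a polynomial $\chi$, in the sense of the remark preceding the Definition of class. For this I use regularity of $u$: a relation $(\Lambda u)'+\Theta u+E(x^{-1}u^2)=0$ translates into the Riccati equation $\Lambda S'=ES^2+\dots$. Comparing two such Riccati equations, and using that $S$ is not rational (or treating the rational, second-degree case separately), forces proportionality. Therefore $\Phi=\chi\Phi_0$ with $\deg\chi\ge1$. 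Take a root $c$ of $\chi$; then $\Phi(c)=0$. Rewriting $\Phi'+\psi$ and $B$ through $\chi$, the quantities $\Phi'(c)+\psi(c)$ and $B(c)$ vanish. The bracket also vanishes, because the remainder term $\langle T,\cdot\rangle$ at $c$ is zero when $T$ equals $\chi$ times a null form. This makes the product zero.

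I expect the main obstacle to be this uniqueness-up-to-$\chi$ step, because the Riccati equations are nonlinear and two of them need not be proportional a priori. I would first try to show that taking a suitable combination of the two Riccati equations eliminates $S'$, leaving a quadratic equation for $S$. If that quadratic were nontrivial, $u$ would be of second degree, and that case then has to be handled directly.
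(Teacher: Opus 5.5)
The paper gives no proof of this proposition; it quotes it from the cited works. Your forward direction follows the standard division argument. The genuine gap is the one you flagged yourself: the converse needs every equation satisfied by $u$ to be a polynomial multiple of a minimal one. That fails for forms of second degree, and there it is the statement itself that fails, not just the proof. Take the Chebyshev form of the second kind: $\beta_n=0$, $\gamma_{n+1}=1/4$, $(u)_{2m}=4^{-m}C_m$ with $C_m$ the Catalan numbers. It satisfies $((x^2-1)u)'-3xu=0$. Since $\langle x^{-1}u^2,x^n\rangle=\sum_{k=0}^{n-1}(u)_k(u)_{n-1-k}$, the Catalan recurrence gives $x^{-1}u^2=4xu$ (that is, $S^2+4zS+4=0$). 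Hence, for every polynomial $\kappa$,
\[
\bigl((x^2-1)u\bigr)'-(3x+4x\kappa)\,u+\kappa\,(x^{-1}u^2)=0 .
\]
For $\kappa=x^3$ the formula gives $s=3$. But $\Phi'(1)+\psi(1)=-5$ and $\Phi'(-1)+\psi(-1)=-3$, so the product is nonzero. Yet $\kappa=1$ (or $\kappa=0$) gives $s=0$, so the class is $0$. Equivalently, $\Phi=x^2-1$, $B=x^3$, $C=x+4x^4$, $D=2+4x^3$ have no common zero, so the Corollary fails too. The case you planned to ``handle directly'' therefore cannot be handled. The converse requires the extra hypothesis that $u$ is not of second degree.

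Under that hypothesis your route works, but three steps need fixing.

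\emph{From proportionality to a polynomial factor.} Eliminating $S'$ forces $\Phi(B_0,C_0,D_0)=\Phi_0(B,C,D)$, so $\chi=\Phi/\Phi_0$ is a priori only rational. It is a polynomial because any denominator of $\chi$ would divide $\Phi_0,B_0,C_0,D_0$, and these have no common zero by your forward direction applied to the minimal equation. Then $\psi=\chi\psi_0-\chi'\Phi_0$, so the left-hand side $T$ of \eqref{Laguerre-Hahn-EF} equals $\chi T_0$.

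\emph{The local decomposition.} With $\Phi=(x-c)\theta_c\Phi$ the correct identity is
\[
T=(x-c)\widetilde T+\bigl(\Phi'(c)+\psi(c)\bigr)u+B(c)\,x^{-1}u^2,\qquad \widetilde T=\bigl((\theta_c\Phi)u\bigr)'+(\theta_c\psi+\theta_c^2\Phi)u+(\theta_cB)(x^{-1}u^2).
\]
So the remainder is not a combination of $\delta_c$ and $\delta_c'$. When $\Phi'(c)+\psi(c)=B(c)=0$, the relation $(x-c)\widetilde T=0$ gives $\widetilde T=\langle\widetilde T,1\rangle\delta_c$, and $\langle\widetilde T,1\rangle$ is exactly the bracket. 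The reduced $\psi$ is $\theta_c\psi+\theta_c^2\Phi$. Its degree may exceed $\deg\psi-1$, but the new value of $s$ is still at most $s-1$.

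\emph{Why the bracket vanishes at a root of $\chi$.} For a root $c$ of $\chi$ one checks $\widetilde T=(\theta_c\chi)T_0=0$, which is the precise reason the bracket vanishes there.
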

\begin{remark}
When it is possible to simplify by the factor \( x - c \), we obtain the new functional equation
\[ 
((\theta_c\Phi)u)' + (\theta_c\psi + \theta_c^2\Phi)u + (\theta_cB)(x^{-1}u^2) = 0. 
\]
Then \( u \) is of class less than or equal to \( s - 1 \).    
\end{remark}

Based on Proposition~\ref{proposition-simplification}, one obtains an alternative criterion to determine the class using the polynomials involved in the Stieltjes equation of Riccati type~\eqref{Riccati}.
\begin{corollary} \cite{Alaya-Maroni}
Let $u$ be a Laguerre-Hahn form and let $A=\Phi$, $B$, $C$, and $D$ be non-zero polynomials satisfying \eqref{Riccati}. Then $s$ given by \eqref{s_class_LH} is the class of $u$ if and only if the polynomials $\Phi$, $B$, $C$, and $D$ are coprime or, equivalently,
$$
\prod_{c\in\mathcal{Z}_{\Phi}}{\big(|B(c)|+|C(c)|+|D(c)|\big)}\neq0.
$$
\end{corollary}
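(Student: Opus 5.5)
The plan is to reduce the corollary to Proposition~\ref{proposition-simplification}. For each zero $c$ of $\Phi$, I will show that the product factor appearing there vanishes if and only if $B(c)=C(c)=D(c)=0$. The tool is the dictionary in item (ii) between the functional equation \eqref{Laguerre-Hahn-EF} and the Riccati coefficients. It gives $C=-\Phi'-\psi$ and $D=-(u\theta_0\Phi)'-(u\theta_0\psi)-(u^2\theta_0^2B)$.

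First, since $\Phi(c)=0$, we have $C(c)=-(\Phi'(c)+\psi(c))$. So the first two terms of the factor in Proposition~\ref{proposition-simplification} vanish exactly when $C(c)=B(c)=0$.

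Second, assuming $B(c)=C(c)=0$, I would compute $D(c)$ and identify it, up to sign, with $\langle u,\theta_c^2\Phi+\theta_c\psi+u\theta_0\theta_cB\rangle$. I would use the definition $(uf)(x)=\langle u,(xf(x)-\zeta f(\zeta))/(x-\zeta)\rangle$, equivalently $(u\theta_0 f)(c)=\langle u,\theta_c f\rangle$. This gives
\[
(u\theta_0\psi)(c)=\langle u,\theta_c\psi\rangle .
\]
Differentiating in $x$ gives
\[
(u\theta_0\Phi)'(c)=\langle u,\theta_c^2\Phi\rangle+\Phi(c)\cdot(\ldots)=\langle u,\theta_c^2\Phi\rangle .
\]
This uses $\frac{d}{dx}\theta_\xi\Phi(x)\big|_{x=c}=\theta_c^2\Phi(\xi)$, which holds because $\Phi(c)=0$ and by the symmetry of divided differences. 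Similarly, $(u^2\theta_0^2B)(c)=\langle u,u\theta_0\theta_cB\rangle$ after unfolding the Cauchy product, with the terms involving $B(c)$ dropping out. Hence $D(c)=-\langle u,\theta_c^2\Phi+\theta_c\psi+u\theta_0\theta_cB\rangle$ whenever $B(c)=C(c)=0$. Each factor therefore vanishes if and only if $B(c)=C(c)=D(c)=0$, and the two product conditions coincide.

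Finally, $\Phi,B,C,D$ are coprime exactly when they have no common zero, and any common zero is in particular a zero of $\Phi$. So coprimality is equivalent to the product in the corollary being nonzero.

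The main obstacle is the bookkeeping in the second step: the careful evaluation of $(u\theta_0\Phi)'(c)$ and, above all, of $(u^2\theta_0^2B)(c)$ through the nested Cauchy-product definitions. Here one must use the hypotheses $\Phi(c)=0$ and $B(c)=0$ precisely to discard the extra terms. I would first check the identities on monomials, using linearity, before assembling the general statement.
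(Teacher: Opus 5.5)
Your proposal is correct. It follows the route the paper indicates: reduce to Proposition~\ref{proposition-simplification}, which the paper cites as the source of the corollary. In fact, no hypothesis on $c$ is needed. For every $c\in\mathbb{C}$:
\begin{itemize}
\item $C(c)=-(\Phi'(c)+\psi(c))$;
\item $(u\theta_0\Phi)'(c)=\langle u,\theta_c^2\Phi\rangle$, because the $x$-derivative of $(\Phi(x)-\Phi(\zeta))/(x-\zeta)$ at $x=c$ is $\theta_c^2\Phi(\zeta)$;
\item $(u^2\theta_0^2B)(c)=\langle u^2,\theta_c\theta_0B\rangle=\langle u,u\theta_0\theta_cB\rangle$ directly from the definitions.
\end{itemize}
So the two products coincide factor by factor. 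No terms in $B(c)$ ever appear, and the step you flag as the main obstacle is a one-line check.
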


Let $a \in \mathbb{C}\setminus\{0\}$ and $b \in \mathbb{C}$. 
If a Laguerre-Hahn form $u$ of class $s$ satisfies \eqref{Laguerre-Hahn-EF}, 
then the shifted form $\tilde{u} = (h_{a^{-1}} \circ \tau_{-b})u$ 
is also a Laguerre-Hahn form of class $s$ and satisfies \cite{Bouakkaz-these,Bouakkaz-Maroni-1991,Dini-these-1988}
\begin{equation*}
(\tilde{\Phi}\,\tilde{u})' + \tilde{\psi}\,\tilde{u}
+ \tilde{B}\bigl(x^{-1}\tilde{u}^2\bigr) = 0,
\end{equation*}
where 
$$\tilde{\Phi}(x) = a^{-\deg\Phi}\,\Phi(ax+b),\ 
\tilde{\psi}(x) = a^{1-\deg\Phi}\,\psi(ax+b),\ 
\tilde{B}(x) = a^{-\deg\Phi}B(ax+b).$$

Consequently, a shifting transformation preserves both the Laguerre-Hahn character and
the class of the form. As a result, one may work with canonical
functional equations by appropriately relocating the zeros of $\Phi$ in
\eqref{Laguerre-Hahn-EF}.\\

%%%%%%%%%%%%%%%%%%%%%%%%%%%%%%%%%%%
\begin{proposition}\label{proposition-FSR} \cite{Dini-these-1988,Dzoumba-these-1985,Maroni-1991}
Let \(\{P_n\}_{n \geq 0}\) be a MOPS with respect to \(u\), satisfying \eqref{TTRR}-\eqref{ic_TTRR}. The following statements are equivalent.
\begin{enumerate}
\item[(i)] \(u\) is a Laguerre-Hahn form of class \(s\) satisfying \eqref{Laguerre-Hahn-EF}.

\item[(ii)] \(\{P_n\}_{n \geq 0}\) satisfies the following structure relation 
\begin{align}
\Phi(x) P_{n+1}'(x) - B_0(x) P_n^{(1)} (x) =& \frac{1}{2}\big(C_{n+1}(x) - C_0(x)\big) P_{n+1} (x)\nonumber\\ 
&- \gamma_{n+1} D_{n+1} (x) P_n (x), \quad n \geq 0, \label{R4}    
\end{align}
where \(C_n\) and \(D_n\) are polynomials with coefficients depending on \(n\), such that 
$\deg C_n \leq s + 1, ~ \deg D_n \leq s,$ 
satisfying the recurrence relations
\begin{align}
C_{n+1}(x) =& -C_n(x) + 2(x - \beta_n) D_n(x), \label{SR-1}\\
\gamma_{n+1} D_{n+1} (x) =& -\Phi(x) + \gamma_n D_{n-1} (x) - (x - \beta_n) C_n (x) + (x - \beta_n)^2 D_n (x), \label{SR-2}
\end{align}
for every \(n \geq 0\), with the initial conditions 
\begin{align}
B_0(x)=&D_{-1}(x)=B(x), \nonumber\\
C_0(x)=&C(x)=-\Phi'(x)-\psi(x), \label{C0-def}\\
D_0(x)=&D(x)=-(u\theta_0\Phi)'(x)-(u\theta_0\psi)(x)-(u^2\theta_0^2B)(x). \label{D0-def}
\end{align}
\end{enumerate}
\end{proposition}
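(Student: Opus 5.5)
The plan is to work with the \emph{associated Stieltjes functions} $S_n:=S(u^{(n)})$ and to show that the Riccati equation \eqref{Riccati} propagates along the associated sequence. Set $S_0=S(u)$. From $\gamma_{n+1}S_{n+1}=-1/S_n-(z-\beta_n)$, the next function satisfies a Riccati equation of the same shape:
\[
\Phi S_n'=D_{n-1}\,S_n^2+C_n S_n+D_n ,\qquad n\ge 0,
\]
with $D_{-1}=B$, $C_0=C$, $D_0=D$, after suitable normalisation by the $\gamma$'s. Substituting $S_n=-1/(\gamma_{n+1}S_{n+1}+z-\beta_n)$ into this equation and multiplying through gives the Riccati equation for $S_{n+1}$. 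Comparing coefficients of $S_{n+1}^2$, $S_{n+1}$ and $1$ yields exactly \eqref{SR-1}, \eqref{SR-2}, together with the shift $D_{n}\mapsto$ coefficient of $S_{n+1}^2$. The degree bounds $\deg C_n\le s+1$, $\deg D_n\le s$ then follow by induction from these recurrences. The one point needing care is the leading-term cancellation in \eqref{SR-2}: I would check degrees $s+2$ directly using \eqref{SR-1}, since the top coefficients cancel.

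\textbf{(i)$\Rightarrow$(ii).} Write $S_0=-P_n^{(1)}/P_{n+1}+O(z^{-2n-3})$, or more precisely use the Padé identity
\[
P_{n+1}S_0+P_n^{(1)}=O(z^{-n-2}).
\]
Differentiate it, multiply by $\Phi$, and substitute the Riccati equation for $\Phi S_0'$. This expresses
\[
\Phi P_{n+1}'-BP_n^{(1)}-\tfrac12(C_{n+1}-C_0)P_{n+1}+\gamma_{n+1}D_{n+1}P_n
\]
as a polynomial that is $O(z^{-1})$ at infinity, hence zero.

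In practice, I find it cleaner to prove \eqref{R4} by induction on $n$ using the three-term recurrence \eqref{TTRR}. The associated polynomials satisfy the same recurrence shifted by one. One differentiates \eqref{TTRR}, multiplies by $\Phi$, and uses \eqref{SR-1}--\eqref{SR-2} to close the induction.

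\emph{Base case.} The case $n=0$ reads $\Phi-B\cdot 1=\frac12(C_1-C_0)P_1-\gamma_1D_1$. Here $C_1=-C_0+2(x-\beta_0)D_0$ and $\gamma_1D_1=-\Phi+B-(x-\beta_0)C_0+(x-\beta_0)^2D_0$, and the identity is checked directly.

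\emph{Induction step.} Assuming \eqref{R4} at ranks $n-1$ and $n$, I would compute $\Phi P_{n+2}'-BP_{n+1}^{(1)}$. Expanding by the recurrences turns it into a combination of $P_{n+1}$ and $P_n$. Then I would replace $P_n$ using $\gamma_{n+1}P_n=(x-\beta_{n+1})P_{n+1}-P_{n+2}$ where needed, and match the result with \eqref{SR-1}--\eqref{SR-2}. I expect this bookkeeping to be the main obstacle. The trick I would try first is to use the Casorati (Wronskian-type) identity
\[
P_{n+1}P_n^{(1)}-P_{n+2}P_{n-1}^{(1)}=\prod\gamma ,
\]
to absorb the cross terms involving $B$.

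\textbf{(ii)$\Rightarrow$(i).} Divide \eqref{R4} by $P_{n+1}^2$ and let $n\to\infty$ formally, or more simply evaluate it at $n=0$ with the initial conditions. Since $D_0$ is defined from $u$, consistency at $n=0$ together with \eqref{SR-1}--\eqref{SR-2} forces $\Phi S'=BS^2+CS+D$ modulo $O(z^{-2n-2})$ for every $n$, hence exactly. Since the Riccati equation \eqref{Riccati} is equivalent to \eqref{Laguerre-Hahn-EF} by the earlier proposition, (i) follows. The class statement comes from the degree constraints and the definition of $s$.
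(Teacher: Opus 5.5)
The paper cites this proposition without proof, so your plan is assessed on its own terms. There is a genuine gap: the plan puts the content of the statement in the wrong place.

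\textbf{The structure relation carries no information.} Your induction for \eqref{R4} does close, and more simply than you expect. By $P^{(1)}_{n+1}=(x-\beta_{n+1})P^{(1)}_n-\gamma_{n+1}P^{(1)}_{n-1}$, the quantity $L_n:=\Phi P_{n+1}'-BP^{(1)}_n$ satisfies $L_{n+1}=\Phi P_{n+1}+(x-\beta_{n+1})L_n-\gamma_{n+1}L_{n-1}$, with no cross terms in $B$. By \eqref{SR-1}--\eqref{SR-2}, the right-hand side of \eqref{R4} obeys the same recurrence. With $\gamma_0=1$ and $P_{-1}=P^{(1)}_{-1}=0$, rank $-1$ is trivial and rank $0$ is your base check. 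No Casorati identity is needed; yours is also misindexed, since the correct one is $P_{n+1}P^{(1)}_{n-1}-P_nP^{(1)}_n=-\gamma_1\cdots\gamma_n$. But this argument never uses (i). In fact \eqref{R4} together with \eqref{SR-1}--\eqref{SR-2} is an identity valid for every MOPS and every choice of $\Phi$, $B$, $C_0$, $D_0$.

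\textbf{Consequences for both directions.} The whole content of the proposition is therefore the degree bounds $\deg C_n\le s+1$, $\deg D_n\le s$. In (ii)$\Rightarrow$(i) you never use them; you reserve them for ``the class''. As written, your argument would prove that every regular form satisfies \eqref{Riccati} for arbitrary $\Phi,\psi,B$. In (i)$\Rightarrow$(ii), your justification of the bounds is wrong. The top coefficients of \eqref{SR-2} do not cancel ``using \eqref{SR-1}'', because the recurrences make sense for arbitrary $\beta_n,\gamma_n$. Vanishing of both the $x^{s+2}$ and the $x^{s+1}$ coefficients (you need both) encodes information about the moments of $u$.

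\textbf{The missing tool: the defect series.} Define $\mathcal{E}_n:=\Phi S_n'-\gamma_nD_{n-1}S_n^2-C_nS_n-D_n$, with $\gamma_0D_{-1}=B$. Carry out your substitution $S_n=-1/(z-\beta_n+\gamma_{n+1}S_{n+1})$ without assuming any Riccati equation. It gives exactly $\mathcal{E}_n=\gamma_{n+1}S_n^2\,\mathcal{E}_{n+1}$, hence
\[
\mathcal{E}_0=\gamma_1\cdots\gamma_{n+1}\,(S_0S_1\cdots S_n)^2\,\mathcal{E}_{n+1},\qquad n\ge 0 .
\]

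\textbf{(ii)$\Rightarrow$(i).} The degree bounds give $\mathcal{E}_{n+1}=O(z^{m})$ with $m=\max(s,\deg\Phi-2)$ independent of $n$. Meanwhile $(S_0\cdots S_n)^2=O(z^{-2n-2})$. So $\mathcal{E}_0=O(z^{m-2n-2})$ for every $n$, i.e.\ $\mathcal{E}_0=0$. This is \eqref{Riccati}, and hence, by the equivalence recalled earlier, \eqref{Laguerre-Hahn-EF}.

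\textbf{(i)$\Rightarrow$(ii).} Since $\gamma_{n+1}\neq 0$ and each $S_n$ is invertible, $\mathcal{E}_0=0$ propagates to $\mathcal{E}_n=0$ for all $n$. Thus $D_{n+1}=\Phi S'_{n+1}-\gamma_{n+1}D_nS^2_{n+1}-C_{n+1}S_{n+1}$. Its expansion at infinity shows $\deg D_{n+1}\le s$ once $\deg D_n\le s$ and $\deg C_{n+1}\le s+1$, the latter following from \eqref{SR-1}. The Pad\'e route then becomes unnecessary.

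\textbf{The class.} Degree bounds can only yield class at most $s$, since they persist when $s$ is increased. Exactness needs the irreducibility criterion of Proposition~\ref{proposition-simplification}, not merely ``the definition of $s$''.
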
 

In the sequel, we will need the following lemma.
\begin{lemma} \cite{Alaya-these-1996,Bouakkaz-these}\label{Lemma-M=N=0}
Let $\left\{B_n\right\}_{n \geq 0}$ be a MOPS, and let $M(x ; n)$, and $N(x ; n)$ be two polynomials such that
$$
M(x ; n) B_{n+1}(x)=N(x ; n) B_n(x), \quad n \geq 0.
$$
If $\operatorname{deg} N(x ; n) \leq n$, then $M(x ; n)=0$ and $N(x ; n)=0$.
\end{lemma}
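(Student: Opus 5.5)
The statement is Lemma~\ref{Lemma-M=N=0}, and I would prove it by a divisibility argument based on the coprimality of consecutive orthogonal polynomials. Fix $n\geq 0$. First I will show that $B_n$ and $B_{n+1}$ have no common zero. Suppose $B_{n+1}(c)=B_n(c)=0$. The three-term recurrence \eqref{TTRR}, read backwards as
\[
\gamma_{k}B_{k-1}=(x-\beta_k)B_k-B_{k+1}, \qquad \gamma_k\neq0,
\]
then gives $B_{n-1}(c)=0$, and by descending induction $B_0(c)=0$. This is impossible because $B_0=1$. Hence $\gcd(B_n,B_{n+1})=1$. (For $n=0$ the coprimality is trivial, since $B_0=1$.)

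Next I use the identity $M(x;n)B_{n+1}=N(x;n)B_n$. Since $B_{n+1}$ divides $N(x;n)B_n$ and is coprime to $B_n$, Gauss's lemma in $\mathbb{C}[x]$ shows that $B_{n+1}$ divides $N(x;n)$. But $\deg B_{n+1}=n+1>n\geq\deg N(x;n)$, so the only possibility is $N(x;n)=0$. The identity then reads $M(x;n)B_{n+1}=0$. As $B_{n+1}\neq0$ and $\mathbb{C}[x]$ is an integral domain, we get $M(x;n)=0$.

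There is no real obstacle here. The only point needing care is the coprimality step: it relies on $\gamma_k\neq0$ for all $k$, which is exactly the regularity of the MOPS. Without that condition the descending induction would break down.
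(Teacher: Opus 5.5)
Your proof is correct and complete. Running the recurrence backwards with $\gamma_k\neq 0$ shows that $B_n$ and $B_{n+1}$ have no common zero, so $B_{n+1}\mid N(x;n)$. The degree bound then forces $N(x;n)=0$, and hence $M(x;n)=0$.

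The paper gives no proof of this lemma; it only cites the theses of Alaya and Bouakkaz, and your divisibility argument is the standard one behind it. Divisibility is the right tool here: for a merely regular (quasi-definite) form, the zeros of $B_{n+1}$ may be complex and even multiple, so an argument that counts $n+1$ distinct zeros of $B_{n+1}$ would not suffice.
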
 

%%%%%%%%%%%%%%%%%%%%%%%%%%%%%%%%%
\section{The coefficients of the main Laguerre-Hahn structure relation}\label{Section3}
%%%%%%%%%%%%%%%%%%%%%%%%%%%%%%%%%

In this section, we first recall the deduction of the nonlinear system governing the recurrence coefficients \(\beta_n\) and \(\gamma_{n+1}\) for a Laguerre–Hahn sequence of class zero, as established in \cite{Bouakkaz-these}. Subsequently, based on this system, we establish a main result that provides the coefficients of the main structure relation.  That result will be used in Section~\ref{Section4} to make explicit those coefficients for all Laguerre–Hahn forms of class zero.

In the sequel, we assume that $\left\{P_n\right\}_{n \geq 0}$ is a Laguerre-Hahn sequence of class zero satisfying \eqref{ic_TTRR}-\eqref{TTRR} and its corresponding form $u$ satisfying \eqref{Laguerre-Hahn-EF} with
\[ 
\Phi(x) = c_2 x^2 + c_1 x + c_0, \quad \psi(x) = a_1 x + a_0, \quad B(x) = b_2 x^2 + b_1 x + b_0,
\]
with 
\[
|c_2| + |a_1| + |b_2| > 0.
\]

%%%%%%%%%%%%%%%%%%%%%%%%%%%%%%%%%%%
\subsection{System fulfilled by the recurrence coefficients \(\beta_n\), \(\gamma_{n+1}\)}
%%%%%%%%%%%%%%%%%%%%%%%%%%%%%%%%%%%
The non-linear system satisfied by $\beta_n$ and $\gamma_{n+1}$ for a Laguerre-Hahn sequence of class zero is established in \cite{Bouakkaz-these} via the functional equation satisfied by its corresponding linear form $u$. In fact, with \eqref{TTRR}, the relation \eqref{Dini_St_Rel} becomes
\begin{equation}\label{(5.1.1)-Maroni}
\Phi(x) P'_{n+1}(x) - B(x) P^{(1)}_n(x) = (G_n x + E_n) P_{n+1}(x) + F_n P_n(x), \quad n \geq 0.
\end{equation}
Comparing the highest degree terms in the structure relation \eqref{(5.1.1)-Maroni} yields 
\[
G_n = c_2(n+1) - b_2, \quad n \geq 0.
\]
Differentiating \eqref{TTRR} and multiplying the resulting relation by $\Phi$ gives
\[
\Phi P'_{n+2} = (x-\beta_{n+1})\Phi P'_{n+1} + \Phi P_{n+1} - \gamma_{n+1}\Phi P'_n,\quad n\geq 0. 
\]
Substituting the structure relation \eqref{(5.1.1)-Maroni} for the indices $n+1$, $n$, and $n-1$ into the last relation gives
\[
\begin{aligned}
&B P_{n+1}^{(1)} + (G_{n+1}x+E_{n+1})P_{n+2} + F_{n+1}P_{n+1}\\
&= (x-\beta_{n+1})\bigl[B P_n^{(1)} + (G_n x+E_n)P_{n+1} + F_n P_n\bigr]\\
&\quad + \Phi P_{n+1} - \gamma_{n+1}\bigl[B P_{n-1}^{(1)} + (G_{n-1}x+E_{n-1})P_n + F_{n-1}P_{n-1}\bigr].
\end{aligned}
\]
Using \eqref{ASSTTRR} with $r=1$, the combination $B P_{n+1}^{(1)} - (x-\beta_{n+1})B P_n^{(1)} + \gamma_{n+1}B P_{n-1}^{(1)}$ vanishes. Hence all terms containing $B$ cancel, leaving
\begin{align}
(G_{n+1}x+&E_{n+1})P_{n+2} + F_{n+1}P_{n+1} \nonumber\\
 =& (x-\beta_{n+1})\bigl[(G_n x+E_n)P_{n+1} + F_n P_n\bigr]\nonumber\\
& + \Phi P_{n+1} - \gamma_{n+1}\bigl[(G_{n-1}x+E_{n-1})P_n + F_{n-1}P_{n-1}\bigr]. \label{Eq-9}
\end{align}
On the other hand, from \eqref{TTRR} we can express
\(
P_{n+2} = (x-\beta_{n+1})P_{n+1} - \gamma_{n+1}P_n\) and \(P_{n-1} = {\gamma_n}^{-1}\bigl[(x-\beta_n)P_n - P_{n+1}\bigr]
\). Then, substituting into \eqref{Eq-9} and collecting terms in $P_{n+1}$ and $P_n$ yields an equation of the form
\[
M(x,n)P_{n+1}(x) = N(x,n)P_n(x), \quad n\geq 1,
\]
with
\[
\begin{aligned}
M(x,n) &= \bigl[c_1 - E_{n+1} + \beta_{n+1}G_{n+1} + E_n - \beta_{n+1}G_n\bigr]x\\
&\quad + \Bigl[c_0 + \beta_{n+1}E_{n+1} + \frac{\gamma_{n+1}}{\gamma_n}F_{n-1} - \beta_{n+1}E_n - F_{n+1}\Bigr],
\end{aligned}
\]
and
\[
\begin{aligned}
N(x,n) &= \bigl[\gamma_{n+1}G_{n-1} + \frac{\gamma_{n+1}}{\gamma_n}F_{n-1} - F_n - \gamma_{n+1}G_{n+1}\bigr]x\\
&\quad + \Bigl[\gamma_{n+1}E_{n-1} + \beta_{n+1}F_n - \gamma_{n+1}E_{n+1} - \beta_n\frac{\gamma_{n+1}}{\gamma_n}F_{n-1}\Bigr].
\end{aligned}
\]
For $n \geq 1$, $M(x,n)$ and $N(x,n)$ are polynomials of degree at most one. By Lemma~\ref{Lemma-M=N=0}, they vanish identically.\\
The vanishing of both coefficients of $M(x,n)$ and $N(x,n)$ for $n\geq 1$, together with the structure relation for $n=0$, yields that the coefficients $\beta_n$, $\gamma_{n+1}$, $E_n$, $F_n$, $n \geq 0$ satisfy the following system \cite[Lemme~2.2]{Bouakkaz-these}:
\begin{align*}
&E_0 = c_1 - b_1 + \beta_0 (c_2 - b_2), \\
&F_0 = \Phi(\beta_0) - B(\beta_0), \\
&E_{n+1} - E_n - c_2 \beta_{n+1} - c_1 = 0, \quad n \geq 1, \\
&\gamma^{-1}_n F_{n-1} - \gamma^{-1}_{n+1} F_{n+1} + \beta_{n+1} \gamma^{-1}_{n+1} (E_{n+1} - E_n) + \gamma^{-1}_{n+1} c_0 = 0, \quad n \geq 1, \\
&\gamma^{-1}_{n+1} F_n - \gamma^{-1}_{n} F_{n-1} + 2c_2 = 0, \quad n \geq 1, \\
&E_{n-1} - E_{n+1} + \beta_{n+1} \gamma^{-1}_{n+1} F_n - \beta_n \gamma^{-1}_{n} F_{n-1} = 0, \quad n \geq 1. 
\end{align*} 
This system provides a set of algebraic relations linking $\beta_n$, $\gamma_{n+1}$, $E_n$ and $F_n$. By introducing the auxiliary quantities $r_n = F_n / \gamma_{n+1}$ and eliminating $E_n$ and $F_n$ through successive substitutions, we obtain, after straightforward calculations, the following system \cite[Lemme~2.3]{Bouakkaz-these}:
\begin{align}
&r_0 = (\Phi(\beta_0) - B(\beta_0)) \gamma^{-1}_1, \label{r0}\\
&\{(2n+1)c_2 - r_0\} \beta_{n+1} - \{(2n-3)c_2 - r_0\} \beta_n + 2c_1 = 0, \quad n \geq 1, \label{(3-34)Houria-System} \\
&(c_2 - r_0)\beta_1 - (-3c_2 - r_0 + 2b_2)\beta_0 + 2c_1 - b_1 = 0,  \label{(3-35)Houria-System} \\
&\{2(n+1)c_2 - r_0\}\gamma_{n+2} - \{2(n-1)c_2 - r_0\}\gamma_{n+1} + \Phi(\beta_{n+1}) = 0, \quad n \geq 1, \label{nnb1-Houria-System}\\
&(2c_2 - r_0)\gamma_2 - (-2c_2 - r_0 + b_2)\gamma_1 + \Phi(\beta_1) = 0, \label{nnb2-Houria-System} \\
&E_n = c_2 \sum_{\nu=0}^n \beta_\nu + (n+1)c_1 - (b_1 + \beta_0 b_2), \quad n \geq 0, \label{En-System}  \\
&F_n = (r_0 - 2c_2 n)\gamma_{n+1}, \quad n \geq 0. \label{Fn-System}
\end{align}

%%%%%%%%%%%%%%%%%%%%%%%%%%%%%%%%%
\subsection{Main structure relation}
%%%%%%%%%%%%%%%%%%%%%%%%%%%%%%%%%

The following result provides explicit formulas for the coefficients in the structure relation of any normalized 
 Laguerre-Hahn form of class zero. These expressions depend only on the coefficients of the polynomials $\Phi$, $\psi$, and $B$ appearing in the functional equation, as well as on the recurrence coefficients $\beta_n$.
\begin{theorem}\label{theorem-SR}
Let $\{P_n\}_{n\geq 0}$ be a MOPS with respect to the normalized Laguerre–Hahn form $u$ of class $s=0$, fulfilling the structure relation~\eqref{R4}. Then we have
\begin{align}
r_0 =&a_1+2b_2-c_2, \label{r0-explicite}\\
C_0(x) =& -(2c_2 + a_1)x - (c_1 + a_0), \label{C0-explicite}\\
D_0(x) =& -(c_2 + b_2 + a_1), \label{D0-explicite}\\
C_{n+1}(x) =& \big[2n c_2 - (a_1 + 2b_2)\big]x +  \beta_{n+1}\big[(a_1 + 2b_2) - 2(n+1)c_2\big] - c_1, \quad n\geq0, \label{Cn+1-explicite}\\
D_{n+1}(x) =& (2n+1)c_2 - (a_1 + 2b_2),\quad n\geq0. \label{Dn+1-explicite}
\end{align}
\end{theorem}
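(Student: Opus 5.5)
The plan is to compare the main structure relation \eqref{R4} with the class-zero structure relation \eqref{(5.1.1)-Maroni}. Both have the same left-hand side, since $B_0=B$. In class $s=0$ we have $\deg C_n\le 1$ and $\deg D_n\le 0$. Matching the two right-hand sides coefficientwise in the basis $\{P_{n+1},P_n\}$ therefore gives
\[
\tfrac12\bigl(C_{n+1}(x)-C_0(x)\bigr)=G_nx+E_n,\qquad -\gamma_{n+1}D_{n+1}=F_n=(r_0-2c_2n)\gamma_{n+1},\qquad n\ge0,
\]
using \eqref{Fn-System}. The proof then consists of computing $C_0,D_0$ directly, fixing the leading coefficients of the $C_n$ by the degree comparison $G_n=c_2(n+1)-b_2$, and getting $D_{n+1}$, $r_0$ and the constant terms of $C_{n+1}$ from the recurrences \eqref{SR-1}--\eqref{SR-2} together with the system \eqref{r0}--\eqref{Fn-System}.

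\emph{Step 1 ($C_0$, $D_0$).} Formula \eqref{C0-explicite} is just \eqref{C0-def} with $\Phi'=2c_2x+c_1$. For \eqref{D0-explicite} I evaluate \eqref{D0-def} term by term, using $(u)_0=1$.
\begin{itemize}
\item $(u\theta_0\Phi)(x)=\langle u,(\Phi(x)-\Phi(\xi))/(x-\xi)\rangle=c_2x+c_2(u)_1+c_1$, so its derivative is $c_2$.
\item $(u\theta_0\psi)=a_1$.
\item $\theta_0^2B=b_2$ is constant, so $(u^2\theta_0^2B)=b_2(u)_0^2=b_2$.
\end{itemize}
Hence $D_0=-(c_2+a_1+b_2)$.

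\emph{Step 2 (leading coefficients, $D_{n+1}$, $r_0$).} From the first matching relation, the leading coefficient of $C_{n+1}$ is $2G_n-(2c_2+a_1)=2nc_2-(a_1+2b_2)$. This is the $x$-coefficient in \eqref{Cn+1-explicite}. It also holds for $n=-1$, where it gives $-(2c_2+a_1)$, consistent with $C_0$. Comparing $x$-coefficients in \eqref{SR-1} gives $2D_n=\mathrm{lc}(C_{n+1})+\mathrm{lc}(C_n)$. Hence $D_{n+1}=(2n+1)c_2-(a_1+2b_2)$, which is \eqref{Dn+1-explicite}; for $n=-1$ this is consistent with $D_0$ from Step 1. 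Setting this equal to $2c_2n-r_0$ from the second matching relation yields $r_0=a_1+2b_2-c_2$, which is \eqref{r0-explicite}.

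\emph{Step 3 (constant terms of $C_{n+1}$).} This is where the real work lies. I would argue by induction on $n$, using the constant term of \eqref{SR-1}:
\[
C_{n+2}(0)=-C_{n+1}(0)-2\beta_{n+1}D_{n+1}.
\]

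\emph{Base case.} Since $C_1=-C_0+2(x-\beta_0)D_0$, we have $C_1(0)=c_1+a_0+2\beta_0(c_2+a_1+b_2)$. The target is $\beta_1(a_1+2b_2-2c_2)-c_1$. To bridge them I use \eqref{(3-35)Houria-System} with $r_0$ replaced by its value from Step 2, which turns it into an affine relation between $\beta_1$ and $\beta_0$. An additional relation among $a_0$, $b_1$, $c_1$ and $\beta_0$ will be needed. I expect it to come from comparing $E_0$ with the constant term of $\tfrac12(C_1-C_0)$, or equivalently from the $x^1$ coefficient of \eqref{SR-2} at $n=0$ with $\gamma_0D_{-1}=B$.

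\emph{Inductive step.} Substituting the induction hypothesis, the claim reduces to the identity
\[
\beta_{n+2}\bigl[(a_1+2b_2)-2(n+2)c_2\bigr]=\beta_{n+1}\bigl[(a_1+2b_2)-2(n+1)c_2-2D_{n+1}\bigr]+2c_1 .
\]
Using $D_{n+1}$ from Step 2, the bracket on the right equals $(a_1+2b_2)-(4n+4)c_2+2c_2\cdot\tfrac{?}{}$; in any case it should coincide, after substituting $r_0$, with relation \eqref{(3-34)Houria-System} written at index $n+1$, where $(2n+3)c_2-r_0=(2n+4)c_2-(a_1+2b_2)$ and $(2n-1)c_2-r_0=2nc_2-(a_1+2b_2)$.

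The main obstacle is this bookkeeping of constant terms. In particular, the base case needs the extra relation involving $a_0$, $b_1$ and $\beta_0$, which I would extract from the $n=0$ instance of \eqref{SR-2} (or from the definition of $E_0$). If the index matching with \eqref{(3-34)Houria-System} fails by a shift, the fallback is to work instead with the closed form \eqref{En-System} for $E_n$ and telescope \eqref{(3-34)Houria-System} to express $c_2\sum_{\nu=0}^{n}\beta_\nu$ in terms of $\beta_{n+1}$.
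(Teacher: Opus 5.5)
Your argument is correct in substance, but it takes a different route from the paper's in two places.

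\emph{How the paper proceeds.} It obtains $r_0$ from the moments of order $0$ and $1$ of the functional equation. These give $\psi(\beta_0)+B'(\beta_0)=0$ and $\Phi(\beta_0)-B(\beta_0)=\gamma_1(a_1+2b_2-c_2)$. It then reads off $D_{n+1}$ from \eqref{Fn-System}. For the constant term of $C_{n+1}$ it starts from $2E_n$ in \eqref{En-System}, telescopes \eqref{(3-34)Houria-System} to eliminate $c_2\sum_{\nu\le n}\beta_\nu$, and closes with \eqref{(3-35)Houria-System} and $\psi(\beta_0)+B'(\beta_0)=0$.

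\emph{How you proceed.} You never compute moments of $u^2$.
\begin{itemize}
\item You get $D_{n+1}$ from the $x$-coefficients of \eqref{SR-1} and deduce $r_0$ by consistency with \eqref{Fn-System}.
\item You recover $b_1=-a_0-(a_1+2b_2)\beta_0$ by comparing $E_0$ with $\tfrac12\bigl(C_1(0)-C_0(0)\bigr)$. Both of your suggested sources (this one, and the $x$-coefficient of \eqref{SR-2} at $n=0$) do yield exactly this relation.
\item Your induction on the constant term of \eqref{SR-1} has as base case the paper's relation \eqref{(6)}, and as inductive step \eqref{(3-34)Houria-System} at index $n+1$.
\end{itemize}
Your route stays inside the structure-relation formalism and makes it transparent that the closed form for $C_{n+1}$ is equivalent to the recurrence system. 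The paper's route instead produces \eqref{FE-n=0} and \eqref{FE-n=1} explicitly, and these are reused in Section~\ref{Section4} to determine $a_0$ and $b_0$ in the two new Bessel cases.

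Two slips need fixing.

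\emph{Sign error in the inductive step.} From $C_{n+2}(0)=-C_{n+1}(0)-2\beta_{n+1}D_{n+1}$ one gets
$\beta_{n+2}\bigl[(a_1+2b_2)-2(n+2)c_2\bigr]=-\beta_{n+1}\bigl[(a_1+2b_2)-2(n+1)c_2+2D_{n+1}\bigr]+2c_1$.
With $D_{n+1}=(2n+1)c_2-(a_1+2b_2)$, the bracket on the right equals $2nc_2-(a_1+2b_2)$. The identity is then literally \eqref{(3-34)Houria-System} at index $n+1$, so your unfinished simplification of the bracket resolves with no index shift and no fallback needed.

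\emph{False consistency remarks at $n=-1$.} Your formulas for $\mathrm{lc}(C_{n+1})$ and $D_{n+1}$ give $-(2c_2+a_1+2b_2)$ and $-(c_2+a_1+2b_2)$ at $n=-1$. These differ from $\mathrm{lc}(C_0)=-(2c_2+a_1)$ and $D_0=-(c_2+a_1+b_2)$ unless $b_2=0$. The remarks are not used anywhere, so nothing breaks. However, this failure at $n=-1$ is exactly why the base case has to be handled separately through \eqref{(3-35)Houria-System} rather than \eqref{(3-34)Houria-System}.
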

\begin{proof}
To prove relation~\eqref{r0-explicite}, we start from the functional equation satisfied by the moments of orders $n=0$ and $n=1$, that is,
\begin{equation}\label{Mom-0-1}
\bigl((\Phi u)' + \psi u + B(x^{-1}u^2)\bigr)_0 = 0
\quad \text{and} \quad
\bigl((\Phi u)' + \psi u + B(x^{-1}u^2)\bigr)_1 = 0,
\end{equation}
and since $(u)_0=1$, $(u)_1=\beta_0$, and $(u)_2=\beta_0^2+\gamma_1$, we obtain the following 
\begin{equation*}
\bigl((\Phi u)'\bigr)_0 = 0, \quad
(\psi u)_0 = \psi(\beta_0), \quad
\bigl(B(x^{-1}u^2)\bigr)_0 = 2b_2\beta_0 + b_1 = B'(\beta_0),
\end{equation*}
and
\begin{align*}
\bigl((\Phi u)'\bigr)_1
&= -\bigl[c_2 (u)_2 + c_1 (u)_1 + c_0 (u)_0\bigr]
= -\Phi(\beta_0) - c_2\gamma_1, \\
(\psi u)_1
&= a_1 (u)_2 + a_0 (u)_1
= \beta_0\,\psi(\beta_0) + a_1\gamma_1, \\
\bigl(B(x^{-1}u^2)\bigr)_1
&= 2b_2 (u)_0 (u)_2 + b_2 (u)_1^2
+ 2b_1 (u)_0 (u)_1 + b_0 (u)_0^2 \\
&= B(\beta_0) + \beta_0 B'(\beta_0) + 2b_2\gamma_1.
\end{align*}
Consequently, the relations in~\eqref{Mom-0-1} become
\begin{equation}\label{phi+B'}
\psi(\beta_0) + B'(\beta_0) = 0,    
\end{equation}
and
\[
-\Phi(\beta_0) - c_2\gamma_1
+ \beta_0 \, \psi(\beta_0) + a_1\gamma_1
+ B(\beta_0) + \beta_0 \, B'(\beta_0) + 2b_2\gamma_1 = 0.
\]
This yields
\[
\Phi(\beta_0) - B(\beta_0)
= \gamma_1\bigl(a_1 + 2b_2 - c_2\bigr).
\]
Therefore, relation~\eqref{r0-explicite} follows by comparing the above identity with~\eqref{r0} and by using the fact that $\gamma_1\neq 0$, which is ensured by the regularity of $u$.

Relation~\eqref{C0-explicite} is readily obtained from~\eqref{C0-def}.
On the other hand, by a direct computation, relation~\eqref{C0-def} yields
\[
D_0(x) = -(u)_0 (c_2 + a_1) - b_2 (u)_0^2,
\]
which coincides exactly with~\eqref{D0-explicite} since the form $u$ is normalized.

Now, comparing the two structural relations \eqref{R4} and \eqref{(5.1.1)-Maroni}, we have 
\[
\left(\frac{1}{2}\left(C_{n+1}(x)-C_0(x)\right)-(G_n x+E_n)\right) P_{n+1}(x)=\left(\gamma_{n+1} D_{n+1}(x)+F_n\right) P_n(x), \quad n\geq 0,
\]
By orthogonality, and using Lemma~\ref{Lemma-M=N=0}, we get
\begin{align}
\frac12\big(C_{n+1}(x) - C_0(x)\big) &= G_n x + E_n, , \quad n\geq 0, \label{Cn+1C0-GnEn}\\
-\gamma_{n+1}D_{n+1}(x) &= F_n, \quad n\geq 0. \label{Dn+1-Fn}    
\end{align}
From~\eqref{Dn+1-Fn} and~\eqref{Fn-System}, we obtain
\[
D_{n+1}(x) = 2c_2 n - r_0, \quad n\geq 0,
\]
which is exactly the desired relation~\eqref{Dn+1-explicite} upon substituting $r_0$ by its explicit expression given in~\eqref{r0-explicite}.
It therefore remains to establish relation~\eqref{Cn+1-explicite}. To this end, we first rewrite~\eqref{Cn+1C0-GnEn} as
\[
C_{n+1}(x) = 2\bigl(G_n x + E_n\bigr) + C_0(x), \quad n\geq 0.
\]
Using \(G_n = c_2(n+1)-b_2\) and \(C_0(x)=-(2c_2+a_1)x-(c_1+a_0)\), we compute
\[
2G_n x + C_0(x) = \bigl(2c_2n - (a_1+2b_2)\bigr)x - c_1 - a_0, , \quad n\geq 0,
\]
hence
\begin{equation}\label{(1)}
C_{n+1}(x) = (2c_2n - (a_1+2b_2))x + 2E_n - c_1 - a_0, \quad n\geq 0. 
\end{equation}
Now, from \eqref{En-System},
\[
2E_n = 2c_2\sum_{\nu=0}^n \beta_\nu + 2(n+1)c_1 - 2b_1 - 2\beta_0 b_2, \quad n\geq 0.
\]
Denote \(\Sigma_n = \sum_{\nu=0}^n \beta_\nu\).  
Using the recurrence \eqref{(3-34)Houria-System} together with \(r_0 = (a_1+2b_2) - c_2\) we obtain the equivalent form
\[
\bigl[2(n+1)c_2 - (a_1+2b_2)\bigr]\beta_{n+1} - \bigl[2(n-1)c_2 - (a_1+2b_2)\bigr]\beta_n + 2c_1 = 0,\quad n\geq 1. 
\]
Define \(\varpi_k := [2(k-1)c_2 - (a_1+2b_2)]\beta_k\). Then the last relation becomes
\[
\varpi_{k+1} - \varpi_k + 2c_2\beta_{k+1} = -2c_1, \quad k\geq 1.
\]
Summing for \(k=1\) to \(n\) gives
\[
\varpi_{n+1} - \varpi_1 + 2c_2\sum_{k=1}^n \beta_{k+1} = -2nc_1.
\]
With \(\varpi_1 = -(a_1+2b_2)\beta_1\), \(\varpi_{n+1}=[2nc_2 - (a_1+2b_2)]\beta_{n+1}\) and \(\sum_{k=1}^n \beta_{k+1}=\Sigma_{n+1}-\beta_0-\beta_1\), we obtain after simplification
\begin{equation}\label{(2)}
2c_2 \Sigma_n = -[2(n+1)c_2 - (a_1+2b_2)]\beta_{n+1} - ((a_1+2b_2)-2c_2)\beta_1 + 2c_2\beta_0 - 2nc_1. 
\end{equation}
Substituting \eqref{(2)} into the expression of \(2E_n\) yields
\begin{align}
2E_n &= -[2(n+1)c_2 - (a_1+2b_2)]\beta_{n+1} - ((a_1+2b_2)-2c_2)\beta_1 \nonumber \\
&\quad + 2(c_2-b_2)\beta_0 + 2c_1 - 2b_1. \label{(3)}
\end{align}
From \eqref{phi+B'} we have \(b_1 = -a_0 - (a_1+2b_2)\beta_0\).  
Insert this into \eqref{(3)}, we get
\begin{align}
2E_n &= -[2(n+1)c_2 - (a_1+2b_2)]\beta_{n+1} - ((a_1+2b_2)-2c_2)\beta_1 \nonumber\\
&\quad + 2(c_2+a_1+b_2)\beta_0 + 2c_1 + 2a_0. \label{(4)}
\end{align}
We now establish a relation linking \(\beta_0\), \(\beta_1\), \(c_1\), \(a_0\) and the coefficients.  
Starting from \eqref{(3-35)Houria-System} and using \(r_0 = a_1+2b_2-c_2\) from \eqref{r0-explicite}, we have
\begin{equation}\label{(5)}
(2c_2 - (a_1+2b_2))\beta_1 + (2c_2 + a_1)\beta_0 + 2c_1 - b_1 = 0. 
\end{equation}
Now eliminate \(b_1\) using \eqref{phi+B'}. Substituting into \eqref{(5)} yields
\[
(2c_2 - (a_1+2b_2))\beta_1 + (2c_2 + a_1)\beta_0 + 2c_1 + a_0 + (a_1+2b_2)\beta_0 = 0.
\]

This implies
\begin{equation}\label{(6)}
(2c_2 - (a_1+2b_2))\beta_1 + 2(c_2 + a_1 + b_2)\beta_0 + 2c_1 + a_0 = 0. 
\end{equation}
Returning to \eqref{(4)}, we note that the combination
\[
-((a_1+2b_2)-2c_2)\beta_1 + 2(c_2 + a_1 + b_2)\beta_0
\]
is exactly the left-hand side of \eqref{(6)} without the terms \(2c_1 + a_0\). Indeed,
\[
-((a_1+2b_2)-2c_2)\beta_1 + 2(c_2 + a_1 + b_2)\beta_0 = (2c_2 - (a_1+2b_2))\beta_1 + 2(c_2 + a_1 + b_2)\beta_0.
\]
Therefore, by \eqref{(6)}, this combination equals \(-2c_1 - a_0\). Substituting into \eqref{(4)} gives
\[
2E_n = -[2(n+1)c_2 - (a_1+2b_2)]\beta_{n+1} + (-2c_1 - a_0) + 2c_1 + 2a_0,
\]
which simplifies to
\begin{equation}\label{(7)}
2E_n = -[2(n+1)c_2 - (a_1+2b_2)]\beta_{n+1} + a_0. 
\end{equation}

Finally, insert \eqref{(7)} into \eqref{(1)}
\[
C_{n+1}(x) = (2c_2n - (a_1+2b_2))x + \bigl(-[2(n+1)c_2 - (a_1+2b_2)]\beta_{n+1} + a_0\bigr) - c_1 - a_0.
\]
which is precisely the formula \eqref{Cn+1-explicite}, thereby completing the proof.
\end{proof}

%%%%%%%%%%%%%%%%%%%%%%%%%%%%%%%%%
\section{Ten canonical cases}\label{Section4}
%%%%%%%%%%%%%%%%%%%%%%%%%%%%%%%%%

The purpose of this section is twofold. It is worth noting that the resolution of the system \eqref{r0}--\eqref{Fn-System} presented in \cite{Bouakkaz-these,Bouakkaz-Maroni-1991} is incomplete, and that the coefficients of the Laguerre-Hahn structure relation corresponding to the eight obtained families are not properly stated. Therefore, on the one hand, we provide a complete resolution of the system \eqref{r0}--\eqref{Fn-System}, which leads to two new families analogous to the Bessel case that have not been reported in the literature. On the other hand, we establish the coefficients of the Laguerre-Hahn structure relation for the ten resulting families, correcting in particular those that were presented in \cite{Bouakkaz-these}.

In fact, the resolution of \eqref{r0}--\eqref{Fn-System} leads to ten canonical cases (see the ten tables below), according to the different values of
\[
\Phi(x) =
\begin{cases}
1 & , \quad (\text{two analogous to Hermite}), \\
x & , \quad (\text{two analogous to Laguerre}), \\
x^2 & , \quad (\text{four analogous to Bessel}), \\
x^2 - 1 & , \quad (\text{two analogous to Jacobi}). 
\end{cases}
\]

\vspace{-0.2cm}
%%%%%%%%%%%%%%%%%%%%%%%%%%%%%%%%%
\subsection{Two cases analogous to Hermite case}
%%%%%%%%%%%%%%%%%%%%%%%%%%%%%%%%%

In this subsection, we present a complete description of Laguerre–Hahn forms of class zero, analogous to the classical Hermite case, in terms of the recurrence coefficients, the functional equation, and the coefficients of the Laguerre–Hahn main structure relation. These forms correspond to the canonical setting $\Phi(x)=1$, in which case the system \eqref{r0}--\eqref{Fn-System} yields the following result:
\begin{lemma}\cite{Bouakkaz-these}
If \(\Phi(x) = 1\), then the coefficients $\beta_n$ and $\gamma_n$ are given by
\begin{align*}
\beta_{n+1} &= \beta_0 - \frac{1}{r_0} (b_1+2b_2\beta_0), \quad n \geq 0, \\
\gamma_{n+1} &= \left(1 - \frac{b_2}{r_0 }\right) \gamma_1 + \frac{n}{r_0}, \quad n \geq 1. 
\end{align*}
\end{lemma}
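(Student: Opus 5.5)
The plan is to specialize the system \eqref{r0}--\eqref{Fn-System} to $\Phi(x)=1$, that is, to $c_2=c_1=0$ and $c_0=1$. With these values every equation becomes a first-order linear recurrence with constant coefficients, which can be solved by hand.

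\textbf{Step 1: show $r_0\neq 0$.} Put $c_2=0$ in \eqref{nnb1-Houria-System}. It becomes
\[
-r_0\gamma_{n+2}+r_0\gamma_{n+1}+\Phi(\beta_{n+1})=0,\qquad n\ge 1 .
\]
Since $\Phi(\beta_{n+1})=1$, assuming $r_0=0$ would give $1=0$. Hence $r_0\neq0$, and we may divide by $r_0$ everywhere below.

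\textbf{Step 2: the coefficients $\beta_n$.} With $c_2=c_1=0$, relation \eqref{(3-34)Houria-System} reads $-r_0\beta_{n+1}+r_0\beta_n=0$, so $\beta_{n+1}=\beta_n$ for all $n\ge1$. Relation \eqref{(3-35)Houria-System} becomes
\[
-r_0\beta_1-(-r_0+2b_2)\beta_0-b_1=0,
\]
that is, $\beta_1=\beta_0-\frac{1}{r_0}(b_1+2b_2\beta_0)$. Combining the two facts gives the stated constant value of $\beta_{n+1}$ for every $n\ge0$.

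\textbf{Step 3: the coefficients $\gamma_n$.} By Step 1, \eqref{nnb1-Houria-System} gives $\gamma_{n+2}=\gamma_{n+1}+\frac1{r_0}$ for $n\ge1$. Relation \eqref{nnb2-Houria-System} becomes
\[
-r_0\gamma_2+(r_0-b_2)\gamma_1+1=0,
\]
that is, $\gamma_2=\bigl(1-\frac{b_2}{r_0}\bigr)\gamma_1+\frac1{r_0}$. This is the claimed formula at $n=1$. An induction on $n$ using the arithmetic recurrence then yields $\gamma_{n+1}=\bigl(1-\frac{b_2}{r_0}\bigr)\gamma_1+\frac{n}{r_0}$ for all $n\ge1$.

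The only genuinely non-routine point is Step 1, where the nonvanishing of $r_0$ must be justified before dividing by it. Here it follows directly from the constant term $\Phi(\beta_{n+1})=1$ in \eqref{nnb1-Houria-System}. One may also note that, by \eqref{r0-explicite}, $r_0=a_1+2b_2$ in this case, although that fact is not needed for the proof.
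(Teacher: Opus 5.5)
Your proof is correct and takes the same route the paper indicates: you specialize the system \eqref{r0}--\eqref{Fn-System} to $c_2=c_1=0$, $c_0=1$ and solve the resulting constant-coefficient recurrences. You also justify $r_0\neq 0$ from the constant term $\Phi(\beta_{n+1})=1$ in \eqref{nnb1-Houria-System}, a point the paper leaves implicit.
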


In \cite{Bouakkaz-these,Bouakkaz-Maroni-1991}, the authors determined all strict Laguerre-Hahn forms of class $s=0$ analogous to the classical Hermite case. These are precisely two families:
\begin{enumerate}
\item[$\bullet$] Case 1: $b_2 \neq 2$.
\item[$\bullet$] Case 2: $b_2 = 2$.
\end{enumerate}

\vspace{-0.3cm}
%%%%%%%%%%%%%%%%%%%%%%%%%%%%%%%%%
\subsubsection{Case 1 analogous to Hermite}
%%%%%%%%%%%%%%%%%%%%%%%%%%%%%%%%%
\begin{table}[htbp]
\centering
\renewcommand{\arraystretch}{1.5}
\setlength{\tabcolsep}{1em} 

\begin{tabular}{@{}p{0.3\textwidth} p{0.50\textwidth}@{}}

\midrule
\begin{minipage}[t]{0.3\textwidth}
\raggedright \textbf{Regularity conditions}
\end{minipage}
&
\begin{minipage}[t]{0.50\textwidth}
\raggedright
$\lambda, \rho, \tau \in \mathbb{C},\quad \rho \neq 0,\quad \tau\neq -n, \quad n \geq 1$.
\end{minipage}
\\
\midrule

\begin{minipage}[t]{0.3\textwidth}
\raggedright \textbf{The recurrence coefficients}
\end{minipage}
&
\begin{minipage}[t]{0.50\textwidth}
\raggedright
$\beta_0 = \lambda$, \quad $\beta_{n+1} = 0$, \quad $n \geq 0$, \\
$\gamma_1 = \rho \displaystyle\frac{\tau+1}{2}$, \quad $\gamma_{n+1} = \displaystyle\frac{n+\tau+1}{2}$, \quad $n \geq 1$.
\end{minipage}
\\
\midrule

\begin{minipage}[t]{0.3\textwidth}
\raggedright \textbf{The functional equation}
\end{minipage}
&
\begin{minipage}[t]{0.50\textwidth}
\raggedright
$\Phi(x) = 1, \quad
\psi(x) = 2\displaystyle\frac{2-\rho}{\rho}x - \frac{4\lambda}{\rho}$,\\
$B(x) = 2\displaystyle\frac{\rho-1}{\rho}x^2 + 2\lambda \displaystyle\frac{2-\rho}{\rho}x + 1 - \rho(\tau+1) - \displaystyle\frac{2\lambda^2}{\rho}$.
\end{minipage}
\\
\midrule

\begin{minipage}[t]{0.3\textwidth}
\raggedright \textbf{The coefficients of the Laguerre-Hahn structure relation}
\end{minipage}
&
\begin{minipage}[t]{0.50\textwidth}
\raggedright
$C_0(x) = 2\displaystyle\frac{\rho-2}{\rho}x + \frac{4\lambda}{\rho}, \quad D_0(x) = -\displaystyle\frac{2}{\rho}$, \\
$C_{n+1}(x) = -2x, \quad  D_{n+1}(x) = -2, \quad n \geq 0$.
\end{minipage}
\\
\bottomrule
\end{tabular}
\caption{Case 1 analogous to Hermite}
\label{tab1-Hermite}
\end{table}
\FloatBarrier
\vspace*{-\baselineskip}
\vspace{-0.4cm}
When \(\tau \in \mathbb{N}\), the form \(u_0\) can be read as
\[
u_0 = \mathcal{H}^{(\tau)} \left(\lambda;\,{0\atop\rho}\,;1\right),
\]
where $\mathcal{H}$ represents the classical Hermite form.
%%%%%%%%%%%%%%%%%%%%%%%%%%%%%%%%%
\subsubsection{Case 2 analogous to Hermite}
%%%%%%%%%%%%%%%%%%%%%%%%%%%%%%%%%

\begin{table}[htbp]
\centering
\renewcommand{\arraystretch}{1.5}
\setlength{\tabcolsep}{1em} 

\begin{tabular}{@{}p{0.3\textwidth} p{0.60\textwidth}@{}}

\midrule
\begin{minipage}[t]{0.3\textwidth}
\raggedright \textbf{Regularity conditions}
\end{minipage}
&
\begin{minipage}[t]{0.40\textwidth}
\raggedright
$\lambda, \rho \in \mathbb{C},\quad \rho \neq 0$.
\end{minipage}
\\
\midrule

\begin{minipage}[t]{0.3\textwidth}
\raggedright \textbf{The recurrence coefficients}
\end{minipage}
&
\begin{minipage}[t]{0.6\textwidth}
\raggedright
$\beta_0 = \lambda$, ~~ $\beta_{n+1} = 0$, ~~ $n \geq 0; ~~
\gamma_1 = \displaystyle\frac{\rho}{2}$, ~~ $\gamma_{n+1} = \displaystyle\frac{n}{2}$, ~~ $n \geq 1$.
\end{minipage}
\\
\midrule

\begin{minipage}[t]{0.3\textwidth}
\raggedright \textbf{The functional equation}
\end{minipage}
&
\begin{minipage}[t]{0.60\textwidth}
\raggedright
$\Phi(x) = 1, \quad \psi(x) = -2x, \quad
B(x) = 2x^2-2\lambda x+1-\rho$.
\end{minipage}
\\
\midrule

\begin{minipage}[t]{0.3\textwidth}
\raggedright \textbf{The coefficients of the Laguerre-Hahn structure relation}
\end{minipage}
&
\begin{minipage}[t]{0.60\textwidth}
\raggedright
$C_0(x) = 2x, \quad 
C_{n+1}(x) = -2x, \quad n \geq 0 $, \\ 
$D_0(x) = 0, \quad D_{n+1}(x) = -2, \quad n \geq 0$.
\end{minipage}
\\
\bottomrule
\end{tabular}
\caption{Case 2 analogous to Hermite}
\label{tab2-Hermite}
\end{table}
\FloatBarrier
\vspace*{-\baselineskip}
\vspace{-0.2cm}
The form \(u_0\) can be read as
\[
u^{(1)}_0=\mathcal{H}. 
\]

%%%%%%%%%%%%%%%%%%%%%%%%%%%%%%%%%
\subsection{Two cases analogous to Laguerre case}
%%%%%%%%%%%%%%%%%%%%%%%%%%%%%%%%%
In this subsection, we present a complete characterization of Laguerre-Hahn forms of class zero analogous to the classical Laguerre case. These forms correspond to the canonical situation \(\Phi(x)=x\), in which case the system \eqref{r0}--\eqref{Fn-System} yields the following result:
\begin{lemma}\cite{Bouakkaz-these}
If \(\Phi(x) = x\), then the coefficients $\beta_n$ and $\gamma_n$ are given by
\begin{align*}
\beta_{n+1} &= \beta_0 + \frac{1}{r_0} \Big( 2(n+1)-b_1-2b_2\beta_0 \Big), \quad n \geq 0, \\
\gamma_{n+1} &= \left(1 - \frac{b_2}{r_0}\right) \gamma_1 + \frac{n}{r_0^2} \Big( n+1-b_1+(r_0-2b_2)\beta_0 \Big), \quad n \geq 1. 
\end{align*}
\end{lemma}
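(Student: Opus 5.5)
Set $c_2=0$, $c_1=1$, $c_0=0$ in the system \eqref{r0}--\eqref{Fn-System} and solve the two linear first-order recurrences directly: first the one for $\beta_n$, then the one for $\gamma_n$. Throughout we may assume $r_0\neq0$. By \eqref{r0-explicite}, $r_0=a_1+2b_2$. If $r_0=0$, then \eqref{(3-34)Houria-System} would read $2c_1=2=0$, which is impossible.

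\emph{Step 1: the $\beta_n$.} With $c_2=0$, relation \eqref{(3-34)Houria-System} becomes $-r_0\beta_{n+1}+r_0\beta_n+2=0$, that is, $\beta_{n+1}=\beta_n+2/r_0$ for $n\ge1$. Relation \eqref{(3-35)Houria-System} becomes
\[
-r_0\beta_1+(r_0-2b_2)\beta_0+2-b_1=0,
\]
so that $\beta_1=\beta_0+\frac{1}{r_0}(2-b_1-2b_2\beta_0)$. Adding $2n/r_0$ gives the stated formula $\beta_{n+1}=\beta_0+\frac{1}{r_0}\big(2(n+1)-b_1-2b_2\beta_0\big)$, and it holds for $n=0$ as well.

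\emph{Step 2: the $\gamma_n$.} With $\Phi(x)=x$, relation \eqref{nnb1-Houria-System} becomes $-r_0\gamma_{n+2}+r_0\gamma_{n+1}+\beta_{n+1}=0$, that is, $\gamma_{n+2}=\gamma_{n+1}+\beta_{n+1}/r_0$ for $n\ge1$. Relation \eqref{nnb2-Houria-System} gives
\[
\gamma_2=\Big(1-\frac{b_2}{r_0}\Big)\gamma_1+\frac{\beta_1}{r_0}.
\]
Telescoping, for $n\ge1$,
\[
\gamma_{n+1}=\Big(1-\frac{b_2}{r_0}\Big)\gamma_1+\frac{1}{r_0}\sum_{k=1}^{n}\beta_k .
\]
For the sum, write $K=\beta_0+\frac{1}{r_0}(2-b_1-2b_2\beta_0)$, so that $\beta_k=K+\frac{2(k-1)}{r_0}$. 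Then
\[
\sum_{k=1}^n\beta_k=nK+\frac{n(n-1)}{r_0}=\frac{n}{r_0}\big(r_0\beta_0+2-b_1-2b_2\beta_0+n-1\big)=\frac{n}{r_0}\big(n+1-b_1+(r_0-2b_2)\beta_0\big).
\]
Dividing by $r_0$ yields exactly the stated formula for $\gamma_{n+1}$, $n\ge1$.

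The argument is two telescopings and has no real obstacle. The only points needing care are the initial relations \eqref{(3-35)Houria-System} and \eqref{nnb2-Houria-System}, which differ from the general ones and produce the $-2b_2\beta_0$ and $(1-b_2/r_0)\gamma_1$ terms, and checking the arithmetic in the final simplification of $\sum_{k=1}^n\beta_k$.
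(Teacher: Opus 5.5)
Your proof is correct and follows the route the paper indicates. The paper gives no separate argument: it only says that the system \eqref{r0}--\eqref{Fn-System}, specialised to $c_2=0$, $c_1=1$, $c_0=0$, yields the lemma (citing Bouakkaz's thesis), and your two telescopings, your use of the special initial relations \eqref{(3-35)Houria-System} and \eqref{nnb2-Houria-System}, and your evaluation of $\sum_{k=1}^n\beta_k$ all check out, as does your observation (left implicit in the paper) that $r_0\neq0$ is forced because \eqref{(3-34)Houria-System} would otherwise read $2=0$.
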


In \cite{Bouakkaz-these,Bouakkaz-Maroni-1991}, the authors determined all strict Laguerre-Hahn forms of class $s=0$ analogous to the classical Laguerre case. These are precisely two families:
\begin{enumerate}
\item[$\bullet$] Case 1: $b_2 \neq 1$.
\item[$\bullet$] Case 2: $b_2 = 1$.
\end{enumerate}

%%%%%%%%%%%%%%%%%%%%%%%%%%%%%%%%%
\subsubsection{Case 1 analogous to Laguerre}
%%%%%%%%%%%%%%%%%%%%%%%%%%%%%%%%%
\begin{table}[htbp]
\centering
\renewcommand{\arraystretch}{1.5}
\setlength{\tabcolsep}{1em} 

\begin{tabular}{@{}p{0.22\textwidth} p{0.65\textwidth}@{}}

\midrule
\begin{minipage}[t]{0.22\textwidth}
\raggedright \textbf{Regularity conditions}
\end{minipage}
&
\begin{minipage}[t]{0.65\textwidth}
\raggedright
$\lambda, \rho, \tau, \alpha\in\mathbb{C}, \quad \rho\neq 0, \quad \alpha+\tau\neq-(n+1),\quad n\geq 0$, \\
$\tau\neq-(n+1), \quad n\geq 0$.
\end{minipage}
\\
\midrule

\begin{minipage}[t]{0.22\textwidth}
\raggedright \textbf{The recurrence coefficients}
\end{minipage}
&
\begin{minipage}[t]{0.65\textwidth}
\raggedright
$\beta_0=2\tau+\alpha+1+\lambda, \quad 
\beta_{n+1}=2(n+\tau+1)+\alpha+1, \quad n\geq 0$,\\
$\gamma_1=\rho(1+\tau)(\tau+\alpha+1), \quad
\gamma_{n+1}=(n+\tau+1)(n+\tau+\alpha+1),\quad n\geq 1$.
\end{minipage}
\\
\midrule

\begin{minipage}[t]{0.22\textwidth}
\raggedright \textbf{The functional equation}
\end{minipage}
&
\begin{minipage}[t]{0.65\textwidth}
\raggedright
$\Phi(x)=x,\quad
\psi(x)=\displaystyle\frac{2-\rho}{\rho}x+\frac{\rho-2}{\rho}(2\tau+\alpha+1)-\frac{2\lambda}{\rho}$,\\
$B(x)=\displaystyle\frac{\rho-1}{\rho}x^2+\left\{2\displaystyle\frac{1-\rho}{\rho}(2\tau+\alpha+1)+\lambda\displaystyle\frac{2-\rho}{\rho}\right\} x$ \\
$\quad\quad\quad~ +(2\tau+\alpha+1+\lambda)\left\{\displaystyle\frac{\rho-1}{\rho}(2\tau+\alpha+1)+1-\frac{\lambda}{\rho}\right\}$\\
$\quad\quad\quad~ -\rho(\tau+1)(\tau+\alpha+1)$.
\end{minipage}
\\
\midrule

\begin{minipage}[t]{0.22\textwidth}
\raggedright \textbf{The coefficients of the Laguerre-Hahn structure relation}
\end{minipage}
&
\begin{minipage}[t]{0.65\textwidth}
\raggedright
$C_0(x)=\displaystyle\frac{\rho-2}{\rho}(x-2\tau-\alpha-1)+\frac{2\lambda}{\rho}-1,\quad D_{0}(x)=-\displaystyle\frac{1}{\rho}$,\\
$C_{n+1}(x)=-x+\alpha+2(n+\tau+1), \quad D_{n+1}(x)=-1, \quad n\geq 0$.
\end{minipage}
\\
\bottomrule
\end{tabular}
\caption{Case 1 analogous to Laguerre}
\label{tab1-Laguerre}
\end{table}
\FloatBarrier
\vspace*{-\baselineskip}
\vspace{-0.2cm}
When \(\tau \in \mathbb{N}\), the form \(u_0\), can be read as
\[ 
u_{0}=\mathcal{L}^{(\tau)}(\alpha) \left(\lambda;\,{0\atop\rho}\,;1\right),
\]
where $\mathcal{L}(\alpha)$ represents the classical Laguerre form.

%%%%%%%%%%%%%%%%%%%%%%%%%%%%%%%%%
\subsubsection{Case 2 analogous to Laguerre}
%%%%%%%%%%%%%%%%%%%%%%%%%%%%%%%%%
\begin{table}[htbp]
\centering
\renewcommand{\arraystretch}{1.5}
\setlength{\tabcolsep}{1em} 

\begin{tabular}{@{}p{0.22\textwidth} p{0.50\textwidth}@{}}

\midrule
\begin{minipage}[t]{0.22\textwidth}
\raggedright \textbf{Regularity conditions}
\end{minipage}
&
\begin{minipage}[t]{0.50\textwidth}
\raggedright
$\lambda, \rho, \alpha \in \mathbb{C},\quad \rho \neq 0,\quad \alpha \neq -n, \quad n \geq 1$.
\end{minipage}
\\
\midrule

\begin{minipage}[t]{0.22\textwidth}
\raggedright \textbf{The recurrence coefficients}
\end{minipage}
&
\begin{minipage}[t]{0.50\textwidth}
\raggedright
$\beta_0 = \alpha - 1 + \lambda$, \quad $\beta_{n+1} = 2n + \alpha + 1$, \quad $n \geq 0$, \\
$\gamma_1 = \rho$, \quad $\gamma_{n+1} = n(n + \alpha)$, \quad $n \geq 1$.
\end{minipage}
\\
\midrule

\begin{minipage}[t]{0.22\textwidth}
\raggedright \textbf{The functional equation}
\end{minipage}
&
\begin{minipage}[t]{0.50\textwidth}
\raggedright
$\Phi(x) = x, \quad
\psi(x) = -x + \alpha - 1$, \\
$B(x) = x^2 + \big\{2(1-\alpha) - \lambda\big\}x + \alpha(\alpha - 1 + \lambda) - \rho$.
\end{minipage}
\\
\midrule

\begin{minipage}[t]{0.22\textwidth}
\raggedright \textbf{The coefficients of the Laguerre-Hahn structure relation}
\end{minipage}
&
\begin{minipage}[t]{0.50\textwidth}
\raggedright
$C_0(x) = x - \alpha, \quad 
C_{n+1}(x) = -x + \alpha + 2n, \quad n \geq 0$, \\
$D_0(x) = 0, \quad D_{n+1}(x) = -1, \quad n \geq 0$.
\end{minipage}
\\
\bottomrule
\end{tabular}
\caption{Case 2 analogous to Laguerre}
\label{tab2-Laguerre}
\end{table}
\FloatBarrier
\vspace*{-\baselineskip}
\vspace{-0.4cm}
The form \(u_0\) can be read as
\[
u_0^{(1)} = \mathcal{L}(\alpha). 
\]

%%%%%%%%%%%%%%%%%%%%%%%%%%%%%%%%%
\subsection{Four cases analogous to Bessel case}
%%%%%%%%%%%%%%%%%%%%%%%%%%%%%%%%%
In this subsection, we provide a complete characterization of all Laguerre-Hahn forms of class zero analogous to the classical Bessel case, expressed in terms of the recurrence coefficients, the corresponding functional equation, and the coefficients appearing in the Laguerre-Hahn main structure relation. These forms correspond to the canonical setting $\Phi(x)=x^2$, in which case the system \eqref{r0}--\eqref{Fn-System} yields the following result:

\vspace{-0.2cm}
\begin{lemma}\cite{Bouakkaz-these}
If \(\Phi(x) = x^2\), then the coefficients $\beta_n$ and $\gamma_n$ are given by
\begin{align}
\beta_{n+1} &= \frac{\mu(r_0 + 1)}{(2n + 1 - r_0)(2n -1 - r_0)}, \quad r_0 \neq 2n-1, \quad n \geq 0, \label{beta-n-P92}\\
\gamma_{n+1} &= \frac{ \mu^2 (r_0 + 1)^2 - 4(2n - r_0 - 1)^2 \mathbf{R}^2 }{4(2n -2 - r_0)(2n -1 - r_0)^2(2n - r_0)} , \quad r_0 \neq n-1, \quad n \geq 1, \label{gamma-n-P92}
\end{align}
where
\begin{equation}\label{3-83}
\mu := (3 + r_0 - 2b_2)\beta_0 - b_1=(2+a_1)\beta_0-b_1,
\end{equation}
\begin{equation}\label{3-84}
\mathbf{R}^2:=\frac{1}{4}\left(\left(2+a_1\right) \beta_0-b_1\right)^2-\gamma_1\left(a_1+2 b_2-1\right)\left(a_1+b_2+1\right).
\end{equation}
\end{lemma}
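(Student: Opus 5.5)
The plan is to specialise the system \eqref{r0}--\eqref{Fn-System} to $\Phi(x)=x^2$, i.e. $c_2=1$, $c_1=c_0=0$, and solve the two resulting first-order linear recurrences by finding integrating factors that make them telescope. From Theorem~\ref{theorem-SR} I will use $r_0=a_1+2b_2-1$. This gives $3+r_0-2b_2=2+a_1$, which is the second expression for $\mu$ in \eqref{3-83}. It also gives $-2-r_0+b_2=-(a_1+b_2+1)$ and $r_0-2b_2+... $ type identities, so that $a_1+2b_2-1=r_0$; these conversions are what produce the factors appearing in \eqref{3-84}.

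\emph{Step 1: the $\beta_n$.} Equation \eqref{(3-35)Houria-System} becomes $(1-r_0)\beta_1+(3+r_0-2b_2)\beta_0-b_1=0$, that is, $(1-r_0)\beta_1=-\mu$. This is \eqref{beta-n-P92} for $n=0$, since $\mu(r_0+1)/((1-r_0)(-1-r_0))=-\mu/(1-r_0)$. Equation \eqref{(3-34)Houria-System} becomes $(2n+1-r_0)\beta_{n+1}=(2n-3-r_0)\beta_n$ for $n\ge1$. Multiplying by $(2n-1-r_0)$ shows that
\[
w_n:=(2n+1-r_0)(2n-1-r_0)\beta_{n+1}
\]
satisfies $w_n=w_{n-1}$. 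Hence $w_n=w_0=-(1-r_0)(1+r_0)\beta_1=\mu(1+r_0)$, which is \eqref{beta-n-P92}. The hypothesis $r_0\neq 2n-1$ is exactly what allows division at each step.

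\emph{Step 2: the $\gamma_n$.} Equation \eqref{nnb1-Houria-System} reads $(2n+2-r_0)\gamma_{n+2}-(2n-2-r_0)\gamma_{n+1}+\beta_{n+1}^2=0$. Multiplying by $(2n-r_0)$ and setting
\[
v_n:=(2n+2-r_0)(2n-r_0)\gamma_{n+2}
\]
gives $v_n-v_{n-1}=-(2n-r_0)\beta_{n+1}^2$. Now substitute Step~1 and use the partial-fraction identity
\[
\frac{2n-r_0}{(2n+1-r_0)^2(2n-1-r_0)^2}=\frac18\Bigl[\frac{1}{(2n-1-r_0)^2}-\frac{1}{(2n+1-r_0)^2}\Bigr],
\]
which holds because the difference of the squares equals $8(2n-r_0)$. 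With it the sum telescopes:
\[
v_n=v_0+\frac{\mu^2(r_0+1)^2}{8}\Bigl[\frac{1}{(2n+1-r_0)^2}-\frac{1}{(1-r_0)^2}\Bigr].
\]
The starting value is $v_0=(2-r_0)(-r_0)\gamma_2$. I get $\gamma_2$ from \eqref{nnb2-Houria-System}, which reads $(2-r_0)\gamma_2=-(a_1+b_2+1)\gamma_1-\beta_1^2$ with $\beta_1=-\mu/(1-r_0)$.

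\emph{Step 3: matching the closed form.} I divide $v_{n-1}$ by $(2n-r_0)(2n-2-r_0)$ to obtain $\gamma_{n+1}$ for $n\ge2$, and I check $n=1$ directly against the formula for $\gamma_2$. The main obstacle is this final algebraic identification. After putting everything over $8(2n-1-r_0)^2$, I expect the constant part to collapse to $-8(2n-1-r_0)^2\mathbf{R}^2$. For this to happen, the combination $-r_0\bigl[(a_1+b_2+1)\gamma_1+\mu^2/(1-r_0)^2\bigr]$ together with $-\mu^2(r_0+1)^2/(8(1-r_0)^2)$ must simplify to $-\tfrac12\mu^2+2r_0(a_1+b_2+1)\gamma_1$-type terms, that is, to $-\tfrac{1}{2}\cdot 4\mathbf{R}^2$ after using $r_0=a_1+2b_2-1$. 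I would verify this by clearing denominators and comparing the coefficients of $\mu^2$ and of $\gamma_1$ separately. The coefficient of $\mu^2$ reduces to the identity $-8r_0-(r_0+1)^2=-(1-r_0)^2\cdot 1 -\dots$, which I would check explicitly. The non-vanishing conditions $r_0\neq n-1$ are needed precisely for the divisions by $(2n-r_0)(2n-2-r_0)$.
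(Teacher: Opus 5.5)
Your Step~1 is correct, and the integrating-factor and telescoping strategy of Step~2 is the right one. The paper gives no derivation of this lemma; it only quotes Bouakkaz's thesis. So your argument has to stand on its own, and it breaks exactly where you flag the main obstacle.

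First, the partial-fraction identity is off by a factor of two. We have $(2n+1-r_0)^2-(2n-1-r_0)^2=4(2n-r_0)$, not $8(2n-r_0)$, so the coefficient is $\tfrac14$, not $\tfrac18$. With your $\tfrac18$, the $n$-dependent part of $\gamma_{n+1}$ comes out as $\mu^2(r_0+1)^2/\bigl(8(2n-2-r_0)(2n-1-r_0)^2(2n-r_0)\bigr)$. That is half of what the statement asserts, so no treatment of the constant term can produce the claimed closed form.

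Second, $v_0=-r_0(2-r_0)\gamma_2=+r_0\bigl[(a_1+b_2+1)\gamma_1+\beta_1^2\bigr]$, with a plus sign. With your minus sign, even the $\gamma_1$-terms cannot match $-\mathbf{R}^2$.

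Third, Step~3 is never carried out. The two targets you name are not the same quantity: the constant part collapsing to $-8(2n-1-r_0)^2\mathbf{R}^2$ over $8(2n-1-r_0)^2$, versus ``$-\tfrac12\cdot 4\mathbf{R}^2$''. The identity ``$-8r_0-(r_0+1)^2=\dots$'' is not a usable identity either. As written, the proposal does not prove the formula for $\gamma_{n+1}$.

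With these corrections the proof closes quickly. Put $K=\tfrac14\mu^2(r_0+1)^2$. The corrected identity turns $v_n-v_{n-1}=-(2n-r_0)\beta_{n+1}^2$ into the statement that $v_n-K/(2n+1-r_0)^2$ does not depend on $n\ge 0$. Its value at $n=0$ is
\[
r_0(a_1+b_2+1)\gamma_1+\frac{\mu^2\bigl(4r_0-(r_0+1)^2\bigr)}{4(1-r_0)^2}
=r_0(a_1+b_2+1)\gamma_1-\frac{\mu^2}{4}=-\mathbf{R}^2,
\]
because $4r_0-(r_0+1)^2=-(1-r_0)^2$ and $r_0=a_1+2b_2-1$. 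Hence $v_{n-1}=K/(2n-1-r_0)^2-\mathbf{R}^2$ for $n\ge 1$. Dividing by $(2n-r_0)(2n-2-r_0)$ gives the stated $\gamma_{n+1}$ for every $n\ge 1$ at once. The case $n=1$ needs no separate check, since it is just $\gamma_2=v_0/\bigl((2-r_0)(-r_0)\bigr)$.
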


In \cite{Bouakkaz-these,Bouakkaz-Maroni-1991}, the authors determined two Laguerre-Hahn forms of class $s=0$ analogous to the classical Bessel case. However, the resolution of the system \eqref{r0}--\eqref{Fn-System} remains incomplete. In the present work, we complete this resolution, leading to two additional families.

Indeed, for $\mathbf{R}=0$, the resolution is complete and the corresponding family is fully described in \cite{Bouakkaz-these}; this corresponds to Case~4 analogous to the Bessel case (see below). In contrast, for $\mathbf{R}\neq 0$, the resolution is incomplete. In this canonical situation, we have
\[
\Phi(x) = x^2, \quad \psi(x) = a_1 x + a_0, \quad B(x) = b_2 x^2 + b_1 x + b_0.
\]
The change of variable yields
\[
\tilde{\phi}(x)=x^2,\quad \tilde{\psi}(x)=a_1 x+a^{-1} \psi(b),\quad \text{and} \quad \tilde{B}(x)=b_2 x^2+a^{-1} B^{\prime}(b)+a^{-2} B(b).
\]
We choose $a$ such that $\mathbf{R}=1$. Then, \eqref{3-84} becomes
\begin{equation}
\frac{\mu^2}{4}-\gamma_1\left(a_1+2 b_2-1\right)\left(a_1+b_2+1\right)=1. 
\end{equation}
\vspace*{-\baselineskip}
\vspace{-0.2cm}
We further set
\begin{align}
\frac{1+r_0}{2}\left(\frac{\mu}{2}-1\right)&=\tau+1,  \label{3-110}\\
-\frac{1+r_0}{2}\left(\frac{\mu}{2}+1\right)&=\tau+2 \alpha-1. \label{3-111}
\end{align}
Thus, three cases arise:
\begin{enumerate}
\item[(i)] $a_1+b_2+1\neq 0$ \; (i.e., $\mu\neq -2$ and $\mu\neq 2$);
\item[(ii)] $a_1+b_2+1=0$ and $\mu=-2$ \; (i.e. $a_1 + b_2 + 1 = 0$ et $\tau = 1 - 2\alpha$);
\item[(iii)] $a_1+b_2+1=0$ and $\mu=2$ \; (i.e. $a_1+b_2+1=0$ et $\tau=-1$).
\end{enumerate}

Only Case~(i) was treated in \cite{Bouakkaz-these}, leading to Case~1 analogous to the Bessel case presented below in Table~\ref{tab1-Bessel}. 

We now complete the resolution of the system by analyzing Cases~(ii) and~(iii).
\begin{enumerate}
\item[$\bullet$] \textbf{Case (ii):} In this case, $a_1=-b_2-1$, $\mu=-2$, and $\tau=1-2\alpha$. Hence, from \eqref{3-110}, we obtain $r_0=2\alpha-3$. Substituting this value into \eqref{beta-n-P92}, we readily obtain the explicit formulas for $\beta_{n+1}$ and $\gamma_{n+1}$, presented in Table~\ref{tab2-Bessel} below. Using \eqref{r0-explicite} together with $a_1=-b_2-1$, we deduce that $b_2=r_0+2=2\alpha-1$, and therefore $a_1=-2\alpha$. From \eqref{3-83}, we obtain
\[
b_1=(2+a_1)\beta_0-\mu=(2-2\alpha)\beta_0+2.
\]

Finally, using the relations
\begin{equation}\label{FE-n=0}
\psi(\beta_0) + B'(\beta_0) = 0,
\end{equation}
and
\begin{equation}\label{FE-n=1}
\Phi(\beta_0) - B(\beta_0)
= \gamma_1\bigl(a_1 + 2b_2 - 1\bigr),
\end{equation}
which are obtained from the functional equation for $n=0$ and $n=1$, we get
\[
a_0=-(a_1+2b_2)\beta_0-b_1=-2,
\]
and
\[
b_0=(1-b_2)\beta_0^2-b_1\beta_0-\gamma_1\bigl(a_1+2b_2-1\bigr)
=-2\beta_0-\gamma_1(2\alpha-3).
\]
This completes the analysis of Case~(ii).

\item[$\bullet$] \textbf{Case (iii):} Assume that $a_1=-b_2-1$, $\mu=2$, and $\tau=-1$. Then, from \eqref{3-111}, it follows that $r_0=-2\alpha+1$. Substituting this value into \eqref{beta-n-P92}, we directly obtain the explicit expressions of $\beta_{n+1}$ and $\gamma_{n+1}$, which are reported in Table~\ref{tab3-Bessel} below. Combining \eqref{r0-explicite} with the relation $a_1=-b_2-1$, we infer that $b_2=r_0+2=-2\alpha+3$, and hence $a_1=2\alpha-4$. Moreover, from \eqref{3-83}, we deduce
\[
b_1=(2+a_1)\beta_0-\mu=(2\alpha-2)\beta_0-2.
\]

Finally, applying \eqref{FE-n=0} and \eqref{FE-n=1}, we obtain
\[
a_0=-(a_1+2b_2)\beta_0-b_1=2,
\]
and
\[
b_0=(1-b_2)\beta_0^2-b_1\beta_0-\gamma_1\bigl(a_1+2b_2-1\bigr)
=2\beta_0-\gamma_1(1-2\alpha).
\]
This concludes the study of Case~(iii).
\end{enumerate}

As a consequence of this complete study, we obtain precisely four families analogous to the Bessel case:
\begin{enumerate}
\item[$\bullet$] Case 1: $\mathbf{R}\neq 0,\quad a_1+b_2+1\neq 0$.
\item[$\bullet$] Case 2 ({\bf new}): $\mathbf{R} \neq 0, \quad a_1 + b_2 + 1 = 0,\quad \mu=-2$.
\item[$\bullet$] Case 3 ({\bf new}): $
\mathbf{R}\neq 0, \quad a_1+b_2+1=0, \quad \mu=2$.
\item[$\bullet$] Case 4: $\mathbf{R}=0$.
\end{enumerate}

%%%%%%%%%%%%%%%%%%%%%%%%%%%%%%%%%
\subsubsection{Case 1 analogous to Bessel}
%%%%%%%%%%%%%%%%%%%%%%%%%%%%%%%%%
\begin{table}[htbp]
\centering
\renewcommand{\arraystretch}{1.5}
\setlength{\tabcolsep}{1em} 

\begin{tabular}{@{}p{0.22\textwidth} p{0.74\textwidth}@{}}

\midrule
\begin{minipage}[t]{0.22\textwidth}
\raggedright \textbf{Regularity conditions}
\end{minipage}
&
\begin{minipage}[t]{0.74\textwidth}
\raggedright
$\lambda, \rho, \tau, \alpha \in \mathbb{C}, \quad
\rho \neq 0, \quad \tau+1 \neq -n$, \\ $\tau+2\alpha-1 \neq -n, \quad \tau+\alpha \neq -n/2, \quad n\geq 0$.
\end{minipage}
\\
\midrule

\begin{minipage}[t]{0.22\textwidth}
\raggedright \textbf{The recurrence coefficients}
\end{minipage}
&
\begin{minipage}[t]{0.8\textwidth}
\raggedright
$\beta_0 = \displaystyle\frac{1-\alpha}{(\tau+\alpha)(\tau+\alpha-1)} + \lambda,~~
\beta_{n+1} = \displaystyle\frac{1-\alpha}{(n+\tau+\alpha)(n+\tau+\alpha+1)}, \quad n\geq 0$, \\
$\gamma_1 = -\rho \displaystyle\frac{(\tau+1)(\tau+2\alpha-1)}{(2\tau+2\alpha-1)(\tau+\alpha)^2(2\tau+2\alpha+1)}$,\\
$\gamma_{n+1} = \displaystyle\frac{(n+\tau+1)(n+\tau+2\alpha-1)}{(2n+2\tau+2\alpha-1)(n+\tau+\alpha)^2(2n+2\tau+2\alpha+1)}, \quad n\geq 1$.
\end{minipage}
\\
\midrule

\begin{minipage}[t]{0.22\textwidth}
\raggedright \textbf{The functional equation}
\end{minipage}
&
\begin{minipage}[t]{0.74\textwidth}
\raggedright
$\Phi(x)=x^2$, \\
$\psi(x)=2\displaystyle\left\{\frac{1-\rho}{\rho}+\frac{\rho-2}{\rho}(\tau+\alpha)\right\}x+\frac{2}{\rho}(2(\tau+\alpha)-1)\beta_0-2\frac{1-\alpha}{\tau+\alpha}$, \\
$B(x)=\displaystyle\frac{1-\rho}{\rho}\left\{2(\tau+\alpha)-1\right\}x^2+2\left\{\frac{1-\alpha}{\tau+\alpha}+\beta_0\left[(\tau+\alpha)\frac{\rho-2}{\rho}+\frac{1}{\rho}\right]\right\}x$\\
$\quad\quad\quad~~+\displaystyle\left(\frac{2\tau+2\alpha-1}{\rho}\right)\beta_0^2 + 2\frac{\alpha-1}{\tau+\alpha}\beta_0 - \frac{\rho(\tau+1)(\tau+2\alpha-1)}{[2(\tau+\alpha)-1](\tau+\alpha)^2}$.
\end{minipage}
\\
\midrule

\begin{minipage}[t]{0.22\textwidth}
\raggedright \textbf{The coefficients of the Laguerre-Hahn structure relation}
\end{minipage}
&
\begin{minipage}[t]{0.74\textwidth}
\raggedright
$C_0(x) = 2\displaystyle\left[(\tau+\alpha)\left(\frac{2}{\rho}-1\right) - \frac{1}{\rho}\right]x - \frac{2}{\rho}\big(2(\tau+\alpha)-1\big)\beta_0 - 2\frac{\alpha-1}{\tau+\alpha}$,\\
$C_{n+1}(x) = \displaystyle\big[2n + 2(\tau+\alpha)\big]x - \frac{2(1-\alpha)}{n+\tau+\alpha}, \quad n\geq 0$, \\
$D_0(x) = \displaystyle\frac{1}{\rho}(2\tau+2\alpha-1),\quad D_{n+1}(x)= 2(n+\tau+\alpha)+1, \quad n\geq 0$.
\end{minipage}
\\
\bottomrule
\end{tabular}
\caption{Case 1 analogous to Bessel}
\label{tab1-Bessel}
\end{table}
\FloatBarrier
\vspace*{-\baselineskip}
\vspace{-0.2cm}
When \(\tau \in \mathbb{N}\), the form \(u_0\) can be read as 
\[
u_0=\mathcal{B}^{(\tau)}(\alpha)
\left(\lambda;\,{0\atop\rho}\,;1\right), 
\]
where $\mathcal{B}(\alpha)$ represents the classical Bessel case.

%%%%%%%%%%%%%%%%%%%%%%%%%%%%%%%%%
\subsubsection{Case 2 analogous to Bessel}
%%%%%%%%%%%%%%%%%%%%%%%%%%%%%%%%%
\begin{table}[h!]
\centering
\renewcommand{\arraystretch}{1.5}
\setlength{\tabcolsep}{1em} 
\begin{tabular}{@{}p{0.22\textwidth} p{0.7\textwidth}@{}}

\midrule
\begin{minipage}[t]{0.22\textwidth}
\raggedright \textbf{Regularity conditions}
\end{minipage}
&
\begin{minipage}[t]{0.7\textwidth}
\raggedright
$\lambda,\rho,\alpha \in \mathbb{C},  \quad \rho \neq 0, \quad \alpha \neq \displaystyle\frac{n+3}{2}, \quad n\geq 0$.
\end{minipage}
\\
\midrule

\begin{minipage}[t]{0.22\textwidth}
\raggedright \textbf{The recurrence coefficients}
\end{minipage}
&
\begin{minipage}[t]{0.7\textwidth}
\raggedright
$\beta_0 = \lambda, \quad
\beta_{n+1} = \displaystyle\frac{1-\alpha}{(n+1-\alpha)(n+2-\alpha)}, \quad n\geq 0$, \\
$\gamma_1 = \rho, \quad
\gamma_{n+1} = -\displaystyle\frac{n(n+2-2\alpha)}{(2n+1-2\alpha)(n+1-\alpha)^2(2n+3-2\alpha)}, ~~ n\geq 1$. 
\end{minipage}
\\
\midrule

\begin{minipage}[t]{0.22\textwidth}
\raggedright \textbf{The functional equation}
\end{minipage}
&
\begin{minipage}[t]{0.7\textwidth}
\raggedright
$\Phi(x) = x^2, \quad \psi(x) = -2\alpha x - 2$, \\
$B(x) = (2\alpha-1)x^2 + \{2(1-\alpha)\lambda + 2\}x - 2\lambda - \rho(2\alpha-3)$.
\end{minipage}
\\
\midrule

\begin{minipage}[t]{0.22\textwidth}
\raggedright \textbf{The coefficients of the Laguerre-Hahn structure relation}
\end{minipage}
&
\begin{minipage}[t]{0.7\textwidth}
\raggedright
$C_0(x) = 2(\alpha-1)x + 2, \quad
C_{n+1}(x) = 2(n - \alpha + 1)x - \displaystyle\frac{2(1-\alpha)}{n-\alpha+1}, \quad n\geq 0$, \\
$D_0(x) = 0, \quad D_{n+1}(x) = 2n-2\alpha + 3, \quad n\geq 0$.
\end{minipage}
\\
\bottomrule
\end{tabular}
\caption{Case 2 analogous to Bessel (new case)}
\label{tab2-Bessel}
\end{table}
\FloatBarrier
\vspace*{-\baselineskip}
\vspace{-0.2cm}
The form \(u_0\) can be read as 
\[
u_0^{(1)} = h_{-1}\mathcal{B}(2-\alpha). 
\]

%%%%%%%%%%%%%%%%%%%%%%%%%%%%%%%%%
\subsubsection{Case 3 analogous to Bessel}
%%%%%%%%%%%%%%%%%%%%%%%%%%%%%%%%%
\vspace*{-\baselineskip}
\vspace{-0.2cm}
\begin{table}[!htbp]
\centering
\renewcommand{\arraystretch}{1.5}
\setlength{\tabcolsep}{1em}

\begin{tabular}{@{}p{0.22\textwidth} p{0.7\textwidth}@{}}

\midrule
\begin{minipage}[t]{0.22\textwidth}
\raggedright \textbf{Regularity conditions}
\end{minipage}
&
\begin{minipage}[t]{0.7\textwidth}
\raggedright
$\lambda, \rho, \alpha \in\mathbb{C}, \quad \rho\neq 0, \quad \alpha\neq \displaystyle\frac{1-n}{2}, \quad n\geq 0$.
\end{minipage}
\\
\midrule

\begin{minipage}[t]{0.22\textwidth}
\raggedright \textbf{The recurrence coefficients}
\end{minipage}
&
\begin{minipage}[t]{0.7\textwidth}
\raggedright
$\beta_0=\lambda, \quad
\beta_{n+1}=\displaystyle\frac{1-\alpha}{(n+\alpha-1)(n+\alpha)}, \quad n\geq 0$, \\
$\gamma_1=\rho, \quad \gamma_{n+1}=\displaystyle\frac{n(n+2\alpha-2)}{(2n+2\alpha-3)(n+\alpha-1)^2(2n+2\alpha-1)}, \quad n\geq 1$. 
\end{minipage}
\\
\midrule

\begin{minipage}[t]{0.22\textwidth}
\raggedright \textbf{The functional equation}
\end{minipage}
&
\begin{minipage}[t]{0.7\textwidth}
\raggedright
$\Phi(x)=x^2, \quad
\psi(x)=2(\alpha-2)x+2$, \\
$B(x)=-(2\alpha-3)x^2+2\{(\alpha-1)\lambda-1\}x+2\lambda+\rho(2\alpha-1)$.
\end{minipage}
\\
\midrule

\begin{minipage}[t]{0.22\textwidth}
\raggedright \textbf{The coefficients of the Laguerre-Hahn structure relation}
\end{minipage}
&
\begin{minipage}[t]{0.7\textwidth}
\raggedright
$C_0(x)=2(1-\alpha)x-2$, \\
$C_{n+1}(x)=2(n+\alpha-1)x-2\displaystyle\frac{1-\alpha}{n+\alpha-1},  \quad n\geq 0$,\\
$D_0(x)=0. \quad D_{n+1}(x)=2n+2\alpha-1, \quad n\geq 0$.
\end{minipage}
\\
\bottomrule
\end{tabular}
\caption{Case 3 analogous to Bessel (new case)}
\label{tab3-Bessel}
\end{table}
\FloatBarrier
\vspace*{-\baselineskip}
\vspace{-0.4cm}
The form \(u_0\) can be read as 
\[
u_0^{(1)}=\mathcal{B}(\alpha). 
\]

%%%%%%%%%%%%%%%%%%%%%%%%%%%%%%%%%
\subsubsection{Case 4 analogous to Bessel}
%%%%%%%%%%%%%%%%%%%%%%%%%%%%%%%%%
\vspace*{-\baselineskip}
\vspace{-0.2cm}
\begin{table}[htbp]
\centering
\renewcommand{\arraystretch}{1.5}
\setlength{\tabcolsep}{1em} 

\begin{tabular}{@{}p{0.22\textwidth} p{0.64\textwidth}@{}}

\midrule
\begin{minipage}[t]{0.22\textwidth}
\raggedright \textbf{Regularity conditions}
\end{minipage}
&
\begin{minipage}[t]{0.64\textwidth}
\raggedright
$\lambda, \rho, \alpha \in\mathbb{C}, \quad \rho\neq0, \quad \alpha\neq \displaystyle\frac{1-n}{2}, \quad n\geq-1$.
\end{minipage}
\\
\midrule

\begin{minipage}[t]{0.22\textwidth}
\raggedright \textbf{The recurrence coefficients}
\end{minipage}
&
\begin{minipage}[t]{0.64\textwidth}
\raggedright
$\beta_0=\displaystyle\frac{1}{(\alpha-1)\alpha}+\lambda, \quad
\beta_{n+1}=\displaystyle\frac{1}{(n+\alpha)(n+\alpha+1)}, \quad n\geq0$,\\
$\gamma_1=\displaystyle\frac{\rho}{(2\alpha-1)\alpha^2(2\alpha+1)}$, \\ 
$\gamma_{n+1}=\displaystyle\frac{1}{(2n+2\alpha-1)(n+\alpha)^2(2n+2\alpha+1)}, \quad n\geq1$.
\end{minipage}
\\
\midrule

\begin{minipage}[t]{0.22\textwidth}
\raggedright \textbf{The functional equation}
\end{minipage}
&
\begin{minipage}[t]{0.64\textwidth}
\raggedright
$\Phi(x)=x^2,\quad
\psi(x)=2\displaystyle\left\{\frac{1-\rho}{\rho}+\alpha\frac{\rho-2}{\rho}\right\} x+\frac{2}{\rho}(2\alpha-1)\beta_0-\frac{2}{\alpha}$,\\
$B(x)=\displaystyle\frac{\rho-1}{\rho}(1-2\alpha)x^2+2\left\{\left[\alpha+\frac{1}{\rho}(1-2\alpha)\right]\beta_0+\frac{1}{\alpha}\right\} x$\\
$\quad\quad\quad~~
+ \displaystyle\frac{2\alpha-1}{\rho}\beta_0^2 - \frac{2\beta_0}{\alpha} + \frac{\rho}{(2\alpha-1)\alpha^2}$.
\end{minipage}
\\
\midrule

\begin{minipage}[t]{0.22\textwidth}
\raggedright \textbf{The coefficients of the Laguerre-Hahn structure relation}
\end{minipage}
&
\begin{minipage}[t]{0.64\textwidth}
\raggedright
$C_0(x)=2\displaystyle\left(\frac{2-\rho}{\rho}\alpha-\frac{1}{\rho}\right)x-\frac{2}{\rho}(2\alpha-1)\beta_0+\frac{2}{\alpha}$, \\
$C_{n+1}(x)=2(n+\alpha)x-\displaystyle\frac{2}{n+\alpha}, \quad n\geq 0$, \\
$D_0(x)=\displaystyle\frac{1}{\rho}(2\alpha-1), \quad D_{n+1}(x)=2n+2\alpha+1, \quad n\geq 0$.
\end{minipage}
\\
\bottomrule
\end{tabular}
\caption{Case 4 analogous to Bessel}
\label{tab4-Bessel}
\end{table}
\FloatBarrier
\vspace*{-\baselineskip}
\vspace{-1cm}
\begin{remark}
The family described in Table~\ref{tab4-Bessel} coincides, up to a shift, with the family obtained in \cite{Bouakkaz-these} for the case $\mathbf{R}=0$, which is characterized by the recurrence coefficients (3-93)--(3-96) (see page~89 of \cite{Bouakkaz-these}). More precisely, the form $u_0$ in Table~\ref{tab4-Bessel} and the form denoted by $\mathcal{B}_2(\alpha,\lambda,\rho)$ in \cite{Mohamed-Imed-2025,Bouakkaz-these}, which is defined through the recurrence coefficients (3-93)--(3-96), are related by
\[
u_0 = h_{2}\left(\mathcal{B}_2\left(2\alpha,\frac{\lambda}{2},\rho\right)\right).
\]
\end{remark}

\vspace{-0.6cm}
%%%%%%%%%%%%%%%%%%%%%%%%%%%%%%%%%
\subsection{Two cases analogous to Jacobi case}
%%%%%%%%%%%%%%%%%%%%%%%%%%%%%%%%%
In this subsection, we provide a full and complete description of all Laguerre-Hahn forms of class zero analogous to the classical Jacobi case, corresponding to the canonical setting $\Phi(x)=x^2-1$.
\begin{lemma}\cite{Bouakkaz-these}
If \(\Phi(x) = x^2-1\), then the coefficients $\beta_n$ and $\gamma_n$ are given by
\begin{align}
%\beta_1 &= \frac{\mu}{r_0 - 1}, \\
\beta_{n+1} &= \frac{\mu(r_0 + 1)}{(2n - 1 - r_0)(2n + 1 - r_0)}, \quad r_0 \neq 2n-1, \, n \geq 1, \\
\gamma_{n+1} &= \frac{n(n - 1 - r_0)\Big((2n -1 - r_0)^2 - \mu^2\Big) + (2n - 1 - r_0)^2\, \mathbf{T}}{(2n - r_0)(2n - 1 - r_0)^2(2n -2 - r_0)} , \quad r_0\neq n-1, \, n \geq 1,
\end{align}
where
$$
\mu := (3 + r_0 - 2b_2)\beta_0 - b_1=(2+a_1)\beta_0-b_1,
$$
$$
\mathbf{T}:=\gamma_1(a_1+2b_2-1)(a_1+b_2+1).
$$
\end{lemma}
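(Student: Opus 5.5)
The plan is to specialise the system \eqref{r0}--\eqref{Fn-System} to $c_2=1$, $c_1=0$, $c_0=-1$ and solve the two first-order linear recurrences it contains: \eqref{(3-34)Houria-System} for $\beta_n$ and \eqref{nnb1-Houria-System} for $\gamma_n$. In each case I multiply by a suitable integrating factor so that the recurrence telescopes. The initial terms come from \eqref{(3-35)Houria-System} and \eqref{nnb2-Houria-System}, rewritten with the identity $r_0=a_1+2b_2-1$ of Theorem~\ref{theorem-SR}. That identity gives in particular $3+r_0-2b_2=2+a_1$, which proves the two expressions for $\mu$ agree, and $-2-r_0+b_2=-(a_1+b_2+1)$.

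\textbf{Step 1 ($\beta_n$).} With $c_1=0$, relation \eqref{(3-34)Houria-System} reads $(2n+1-r_0)\beta_{n+1}=(2n-3-r_0)\beta_n$ for $n\ge1$. Multiplying by $(2n-1-r_0)$ shows that $w_n:=(2n-1-r_0)(2n-3-r_0)\beta_n$ satisfies $w_{n+1}=w_n$, so $w_n$ is constant for $n\ge1$. Relation \eqref{(3-35)Houria-System} becomes $(1-r_0)\beta_1+\mu=0$, hence $\beta_1=\mu/(r_0-1)$. Therefore $w_1=(1-r_0)(-1-r_0)\beta_1=\mu(r_0+1)$, and dividing gives the stated formula for all $n\ge0$ under $r_0\neq 2n-1$.

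\textbf{Step 2 ($\gamma_n$).} Relation \eqref{nnb1-Houria-System} with $\Phi(x)=x^2-1$ reads
$(2n+2-r_0)\gamma_{n+2}-(2n-2-r_0)\gamma_{n+1}+\beta_{n+1}^2-1=0$ for $n\ge1$. Multiplying by $(2n-r_0)$ shows that $v_n:=(2n-r_0)(2n-2-r_0)\gamma_{n+1}$ satisfies
$v_{n+1}-v_n=(2n-r_0)(1-\beta_{n+1}^2)$. For the initial value, \eqref{nnb2-Houria-System} gives $(2-r_0)\gamma_2=-(a_1+b_2+1)\gamma_1+1-\beta_1^2$. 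Hence
$v_1=-r_0(2-r_0)\gamma_2=\mathbf{T}+r_0(\beta_1^2-1)$, which is exactly where $\mathbf{T}$ enters. Summing the telescoped relation from $k=1$ to $n-1$ produces two sums:
\begin{itemize}
\item the arithmetic sum $\sum(2k-r_0)$;
\item the sum $\mu^2(r_0+1)^2\sum_k (2k-r_0)/\big((2k-1-r_0)^2(2k+1-r_0)^2\big)$.
\end{itemize}
The second sum telescopes because
$\frac{1}{(2k-1-r_0)^2}-\frac{1}{(2k+1-r_0)^2}=\frac{4(2k-r_0)}{(2k-1-r_0)^2(2k+1-r_0)^2}$.
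Together with the $r_0(\beta_1^2-1)$ term from $v_1$, this should collapse to
$v_n=\mathbf{T}+n(n-1-r_0)-\mu^2(r_0+1)^2/(2n-1-r_0)^2$, up to bookkeeping.

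\textbf{Step 3 (closed form).} Write $\mu^2(r_0+1)^2/(2n-1-r_0)^2$ over the common denominator $(2n-1-r_0)^2$. Then divide $v_n$ by $(2n-r_0)(2n-2-r_0)$, assuming $r_0\ne n-1$ so that no factor vanishes, to reach the displayed formula for $\gamma_{n+1}$. As a consistency check, verify that the case $n=1$ reproduces the initial value derived from \eqref{nnb2-Houria-System}.

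The main obstacle is this last algebraic consolidation. The boundary terms of the telescoping, namely $\mu^2/(1-r_0)^2$ times $(r_0+1)^2$ together with $r_0\beta_1^2$, must recombine exactly with the arithmetic sum into the factored form $n(n-1-r_0)\big((2n-1-r_0)^2-\mu^2\big)$ of the statement. I expect a discrepancy in the $\mu^2$-coefficient (a factor like $(r_0+1)^2$ against $n(n-1-r_0)$) that has to be reconciled carefully, possibly by rechecking the normalisation of $\mu$. I would do this part by direct expansion, if necessary with symbolic computation as the paper does elsewhere.
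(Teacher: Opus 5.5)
Your route is the intended one. The paper gives no proof of its own and cites the thesis, where the lemma comes from specialising the Section~3 system to $c_2=1$, $c_1=0$, $c_0=-1$. Everything you actually carry out is correct: Step~1, the integrating factor $v_n=(2n-r_0)(2n-2-r_0)\gamma_{n+1}$ with $v_{n+1}-v_n=(2n-r_0)(1-\beta_{n+1}^2)$, the initial value $v_1=\mathbf{T}+r_0(\beta_1^2-1)$ via $\mathbf{T}=r_0(a_1+b_2+1)\gamma_1$, and the telescoping identity.

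The gap is that you stop before the one computation that proves the $\gamma_{n+1}$ formula, and the collapsed form you announce, $v_n=\mathbf{T}+n(n-1-r_0)-\mu^2(r_0+1)^2/(2n-1-r_0)^2$, is wrong. At $n=1$ it gives $\mathbf{T}-r_0-\mu^2(r_0+1)^2/(1-r_0)^2$, not your own $v_1=\mathbf{T}-r_0+r_0\mu^2/(1-r_0)^2$. The $n$-dependent term should enter as $+\mu^2(r_0+1)^2/\bigl(4(2n-1-r_0)^2\bigr)$, together with a constant $-\mu^2/4$ coming from the $k=1$ boundary terms. The discrepancy you anticipate exists only in this guess, and no renormalisation of $\mu$ is needed.

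To finish, sum over $k=1,\dots,n-1$ using $\sum_{k=1}^{n-1}(2k-r_0)=(n-1)(n-r_0)$:
\[
v_n=\mathbf{T}+r_0\beta_1^2-r_0+(n-1)(n-r_0)-\frac{\mu^2(r_0+1)^2}{4}\Bigl[\frac{1}{(1-r_0)^2}-\frac{1}{(2n-1-r_0)^2}\Bigr].
\]
First, $-r_0+(n-1)(n-r_0)=n(n-1-r_0)$. Second, since $\beta_1=\mu/(r_0-1)$, the boundary terms collapse to a constant:
\[
r_0\beta_1^2-\frac{\mu^2(r_0+1)^2}{4(1-r_0)^2}=\frac{\mu^2\bigl(4r_0-(1+r_0)^2\bigr)}{4(1-r_0)^2}=-\frac{\mu^2}{4}.
\]
Third, the difference of squares $(r_0+1)^2-(2n-1-r_0)^2=-4n(n-1-r_0)$ turns the leftover $(r_0+1)^2$ into exactly the factor $n(n-1-r_0)$ you were worried about:
\[
v_n=\mathbf{T}+n(n-1-r_0)+\frac{\mu^2}{4}\cdot\frac{(r_0+1)^2-(2n-1-r_0)^2}{(2n-1-r_0)^2}=\mathbf{T}+\frac{n(n-1-r_0)\bigl((2n-1-r_0)^2-\mu^2\bigr)}{(2n-1-r_0)^2}.
\]
Dividing by $(2n-r_0)(2n-2-r_0)$ gives the stated $\gamma_{n+1}$ exactly, with $\mu$ as defined and without symbolic computation. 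As a check, taking $\mathbf{T}=0$, $r_0=-\alpha-\beta-1$, $\mu=\alpha-\beta$ reproduces the $\gamma_{n+1}$ of Case~2 analogous to Jacobi.
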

In \cite{Bouakkaz-these,Bouakkaz-Maroni-1991}, the authors determined all strict Laguerre-Hahn forms of class $s=0$ analogous to the classical Jacobi case. These are precisely two families:
\begin{enumerate}
\item[$\bullet$] Case 1: $\mathbf{T}\neq 0$.
\item[$\bullet$] Case 2: $\mathbf{T} = 0$.
\end{enumerate}

%%%%%%%%%%%%%%%%%%%%%%%%%%%%%%%%%
\subsubsection{Case 1 analogous to Jacobi}
%%%%%%%%%%%%%%%%%%%%%%%%%%%%%%%%%
\vspace*{-\baselineskip}
\vspace{-0.2cm}
\begin{table}[htbp]
\centering
\renewcommand{\arraystretch}{1.5}
\setlength{\tabcolsep}{1em} 

\begin{tabular}{@{}p{0.22\textwidth} p{0.8\textwidth}@{}}

\midrule
\begin{minipage}[t]{0.22\textwidth}
\raggedright \textbf{Regularity conditions}
\end{minipage}
&
\begin{minipage}[t]{0.8\textwidth}
\raggedright
$\lambda, \rho, \alpha, \beta, \tau \in\mathbb{C},\quad \rho\neq0$,\\
$\tau\neq-n-1,\quad \tau+\alpha\neq-n-1,\quad \tau+\beta\neq-n-1,\quad 2\tau+\alpha+\beta\neq-n,\quad n\geq 0$.
\end{minipage}
\\
\midrule

\begin{minipage}[t]{0.22\textwidth}
\raggedright \textbf{The recurrence coefficients}
\end{minipage}
&
\begin{minipage}[t]{0.8\textwidth}
\raggedright
$\beta_0=\displaystyle\frac{\alpha^2-\beta^2}{(\alpha+\beta+2\tau+2)(\alpha+\beta+2 \tau)}+\lambda$,\\
$\beta_{n+1}=\displaystyle\frac{\alpha^2-\beta^2}{(2n+\alpha+\beta+2\tau+2)(2n+\alpha +\beta+2\tau+4)},\quad n\geq 0$,\\
$\gamma_1=4\rho\displaystyle\frac{(\tau+1)(\tau+\alpha+\beta+1)(\tau+\alpha+1)(\tau+\beta+1)}{(2 \tau+\alpha+\beta+1)(2\tau+\alpha+\beta+2)^2(2\tau+\alpha+\beta+3)}$,\\
$\gamma_{n+1}=4\displaystyle\frac{(n+\tau+1)(n+\tau+\alpha+\beta+1)(n+\tau+\alpha+1)(n+ \tau+\beta+1)}{(2n+2\tau+\alpha+\beta+1)(2n+2\tau+\alpha+\beta+2)^2(2n+2\tau+\alpha+ \beta+3)},\quad n\geq1$.
\end{minipage}
\\
\midrule

\begin{minipage}[t]{0.22\textwidth}
\raggedright \textbf{The functional equation}
\end{minipage}
&
\begin{minipage}[t]{0.8\textwidth}
\raggedright
$\Phi(x)=x^2-1$,\\
$\psi(x)=\left\{\frac{\rho-2}{\rho}(2\tau+\alpha+\beta+1)-1\right\}x+\frac{2}{\rho}(2\tau+\alpha+\beta+1)\beta_0-\frac{\alpha^2-\beta^2}{2\tau+\alpha+\beta+2}$,\\
$B(x)=\frac{1-\rho}{\rho}(2\tau+\alpha+\beta+1)x^2+\left\{\left[\frac{\rho-2}{\rho}(2\tau+\alpha+\beta)+2\frac{\rho-1}{\rho}\right]\beta_0+\frac{\alpha^2-\beta^2}{2\tau+\alpha+\beta+2}\right\}x$\\
$\quad\quad\quad+(2\tau+\alpha+\beta+3)\gamma_1-1+\frac{1}{\rho}(2\tau+\alpha+\beta+1)\beta_0^2-\frac{\alpha^2-\beta^2}{2\tau+\alpha+\beta+2}\beta_0$.
\end{minipage}
\\
\midrule

\begin{minipage}[t]{0.22\textwidth}
\raggedright \textbf{The coefficients of the Laguerre-Hahn structure relation}
\end{minipage}
&
\begin{minipage}[t]{0.8\textwidth}
\raggedright
$C_0(x)=\left\{\frac{2-\rho}{\rho}(2\tau+\alpha+\beta+1)-1\right\}x-\frac{2}{\rho}(2\tau+\alpha+\beta+1)\beta_0+\frac{\alpha^2-\beta^2}{2\tau+\alpha+\beta+2}$, \\
$C_{n+1}(x)=(2n+2\tau+\alpha+\beta+2)x
-\frac{\alpha^2-\beta^2}{2n+2\tau+\alpha+\beta+2},
\quad n\geq 0$,\\
$D_0(x)=\frac{1}{\rho}(2\tau+\alpha+\beta+1),\quad
D_{n+1}(x)=2n+2\tau+\alpha+\beta+3,\quad n\geq 0$.
\end{minipage}
\\
\bottomrule
\end{tabular}
\caption{Case 1 analogous to Jacobi}
\label{tab1-Jacobi}
\end{table}
\FloatBarrier
\vspace*{-\baselineskip}
\vspace{-0.2cm}
When \(\tau\in\mathbb{N}\) the form \(u_0\)  can be read as 
\[
u_0=\mathcal{J}^{(r)}(\alpha,\beta)\left(\lambda;\,{0\atop\rho}\,;1\right), 
\]
where $\mathcal{J}(\alpha,\beta)$ represents the classical Jacobi form.

%%%%%%%%%%%%%%%%%%%%%%%%%%%%%%%%%
\subsubsection{Case 2 analogous to Jacobi}
%%%%%%%%%%%%%%%%%%%%%%%%%%%%%%%%%

\begin{table}[htbp]
\centering
\renewcommand{\arraystretch}{1.5}
\setlength{\tabcolsep}{1em} 

\begin{tabular}{@{}p{0.22\textwidth} p{0.75\textwidth}@{}}

\midrule
\begin{minipage}[t]{0.22\textwidth}
\raggedright \textbf{Regularity conditions}
\end{minipage}
&
\begin{minipage}[t]{0.75\textwidth}
\raggedright
$\lambda, \rho, \alpha, \beta \in \mathbb{C}, \quad \rho \neq 0, \quad \alpha \neq -n, \quad \beta \neq -n, \quad \alpha + \beta \neq -n, \quad n \geq 1$.
\end{minipage}
\\
\midrule

\begin{minipage}[t]{0.22\textwidth}
\raggedright \textbf{The recurrence coefficients}
\end{minipage}
&
\begin{minipage}[t]{0.75\textwidth}
\raggedright
$\beta_0 = \lambda, \quad
\beta_{n+1} = -\displaystyle\frac{\alpha^2 - \beta^2}{(2n + \alpha + \beta)(2n + \alpha + \beta + 2)}, \quad n \geq 0$, \\
$\gamma_1 = \rho, ~~
\gamma_{n+1} = 4\displaystyle\frac{n(n + \alpha + \beta)(n + \alpha)(n + \beta)}{(2n + \alpha + \beta - 1)(2n + \alpha + \beta)^2(2n + \alpha + \beta + 1)}, ~~ n \geq 1$.
\end{minipage}
\\
\midrule

\begin{minipage}[t]{0.22\textwidth}
\raggedright \textbf{The functional equation}
\end{minipage}
&
\begin{minipage}[t]{0.75\textwidth}
\raggedright
$\Phi(x) = x^2 - 1, \quad
\psi(x) = (\alpha + \beta - 2)x + \alpha - \beta$, \\
$B(x) = (1 - \alpha - \beta)x^2 + ((\alpha + \beta)\lambda + \beta - \alpha)x + (\alpha - \beta)\lambda - 1 + \rho(1 + \alpha + \beta)$.
\end{minipage}
\\
\midrule

\begin{minipage}[t]{0.22\textwidth}
\raggedright \textbf{The coefficients of the Laguerre-Hahn structure relation}
\end{minipage}
&
\begin{minipage}[t]{0.75\textwidth}
\raggedright
$C_0(x) = -(\alpha + \beta)x + \beta - \alpha$,\\ 
$C_{n+1}(x) = (2n + \alpha + \beta)x + \displaystyle\frac{\alpha^2 - \beta^2}{2n + \alpha + \beta}, \quad n \geq 0$, \\
$D_0(x) = 0, \quad D_{n+1}(x) = 2n + \alpha + \beta +1, \quad n \geq 0$.
\end{minipage}
\\
\bottomrule
\end{tabular}
\caption{Case 2 analogous to Jacobi}
\label{tab2-Jacobi}
\end{table}
\FloatBarrier
\vspace*{-\baselineskip}
\vspace{-0.2cm}
The form \(u_0\) can be read as 
\[
u_0^{(1)} = h_{-1}\mathcal{J}(\alpha,\beta). 
\]
\begin{remark}
The family described in Table~\ref{tab2-Jacobi} coincides with the family obtained in \cite{Bouakkaz-these} for the case $\mathbf{R}=0$, which is characterized by the recurrence coefficients (3-126)--(3-129) (see page~102 of \cite{Bouakkaz-these}). More precisely, the form $u_0$ in Table~\ref{tab2-Jacobi} and the form denoted by $\mathcal{J}_2(\alpha, \mu, \lambda, \rho)$ in \cite{Mohamed-Imed-2025,Bouakkaz-these}, which is defined via the recurrence coefficients (3-126)--(3-129), are related by
\[
u_0 = \mathcal{J}_2(\alpha+\beta, \alpha-\beta, \lambda, \rho).
\]
\end{remark}

%%%%%%%%%%%%%%%%%%%%%%%%%%%%%%%%%%%%%%%%%%
\section{Final remarks}

The present work reveals that the Laguerre-Hahn set of class zero is richer than previously documented, as two additional families analogous to the Bessel case emerge from a complete resolution of the governing system. This finding, together with the corrections brought to the structure relation coefficients, consolidates the theoretical foundations on which the algorithmic approach \textit{4oDELH} relies. The effectiveness of this symbolic framework, illustrated here for the second Bessel-type family, opens a systematic path toward the explicit construction of fourth-order differential equations for all remaining Laguerre-Hahn families, in particular, those of class zero, whose treatment is already established and will be addressed in forthcoming works.

%%%%%%%%%%%%%%%%%%%%%%%%%%%%%%%%%%%%%%%%%%

\section*{Addendum}

In this paper, we present the results of a scientific work originally
started by the two last authors, Pascal Maroni and Zélia da Rocha.
This project coupled theoretical results
with their implementation in a {\it Mathematica$^{\circledR}$} software.
During the preparation, and thanks to the software developed, these authors
realized that there were inconsistencies in some formulas.
Pascal Maroni then undertook a verification of the results presented in \cite{Bouakkaz-these,Bouakkaz-Maroni-1991} and realized that the description of the class 0 of Laguerre–Hahn families given therein was incomplete: two new families, analogous to Bessel, emerged. Moreover, he found it more appropriate to modify the parametrizations of two other sequences. In addition, the coefficients appearing in the main structure relations for all eight families reported in \cite{Bouakkaz-these} were incorrect. Since these coefficients were crucial for deriving the corresponding differential equations, he decided to correct them also.

As is often done by scientists, the work was suspended with the intention to resume it later with a rested mind. 
When the two authors decided to return to this project, Pascal Maroni realized that his handwritten manuscripts had been lost. He only retained a draft containing some of the final formulas in a TeX file. As a consequence, it became necessary to reconstruct the entire work from the beginning.

Pascal Maroni was a very prolific researcher and had other projects and collaborations
prevented him from returning to this subject.
Unfortunately, Pascal Maroni passed away in January 2024. 
His friends, collaborators, and admirers thus decided to honor his memory,
to finish all the ongoing works and publish them to disseminate his ideas into
the scientific community, and to insert Pascal as one of the authors.
In this spirit, Zélia da Rocha invited the first author, Mohamed Khalfallah, to reconstruct the theoretical part so that this work could finally be completed.

\section*{Declarations}

\noindent  {\bf Data Availability} No datasets were generated or analysed during the current study.

\noindent  {\bf Conflicts of Interest} The authors have no conflicts of interest to declare.

\noindent  {\bf Competing interests} The authors declare no competing interests.

\noindent {\bf Funding:} The third author was partially supported by CMUP, a member of LASI, which is financed by national funds through FCT -- Funda\c c\~ao para a Ci\^encia e a Tecnologia, I.P., under the projects with reference UID/00144/2025 and associated DOI given by \url{https://doi.org/10.54499/UID/00144/2025}
%%%%%%%%%%%%%%%%%%%%%%%%%%%%%%%%%%%%%%%
%%%%%%%%%%%%%%%%%%%%%%%%%%%%%%%%%%%%%%%
\begin{appendices}
%%%%%%%%%%%%%%%%%%%%%%%%%%%%%%%%%%%%%%%
%%%%%%%%%%%%%%%%%%%%%%%%%%%%%%%%%%%%%%%

%%%%%%%%%%%%%%%%%%%%%%%%%%%%%%%%%
\section{Four structure relations and a fourth-order differential equation for case 2 analogous to Bessel}\label{Section5}
%%%%%%%%%%%%%%%%%%%%%%%%%%%%%%%%%

In the sequel of the recent paper \cite{khalfallah2026}, we present in this appendix four structure relations and a fourth-order homogeneous linear differential equation satisfied by the Laguerre-Hahn orthogonal polynomial families analogous to the Bessel case, namely Case~2. These results are obtained by applying the algorithm {\it 4oDELH} introduced in \cite{khalfallah2026}.
The input data for the algorithm consist of the recurrence coefficients, the coefficients of the functional equation, and the coefficients of the structure relation given in Table \ref{tab2-Bessel}. From the results of this case, we also illustrate the type of explicit relations and equations satisfied by Laguerre-Hahn families of class 0. These formulas were derived through extensive computations that would be impractical to carry out by hand.

%%%%%%%%%%%%%%%%%%%%%%%%%%%%%%%%%%%
%%%%%%%%%%%%%%%%%%%%%%%%%%%%%%%%%%%
\subsection{Four structure relations}
%%%%%%%%%%%%%%%%%%%%%%%%%%%%%%%%%%%
%%%%%%%%%%%%%%%%%%%%%%%%%%%%%%%%%%%

%%%%%%%%%%%%%%%%%%%%%%%%%%%%%%%%%%%%%%%%%%%%%%%
\noindent {\bf First structure relation}
%%%%%%%%%%%%%%%%%%%%%%%%%%%%%%%%%%%%%%%%%%%%%%%%%%
\begin{eqnarray}
{\bf {\bf G_{0,1}(x;n)}P^{(1)}_{n-1}(x)+{\bf G_{1,1}(x;n)}P^{(1)}_{n}(x)+{\bf H_{1}(x;n)}P_{n}(x)=}&&\notag\\
{\bf {\bf \Phi(x)} P'_{n+1}(x)+{\bf M_{0,1}(x;n)}P_{n+1}(x).}&&\notag
\end{eqnarray}
\begin{eqnarray}
&& {\bf G_{0,1}(x;n)}=0,\ 
 {\bf G_{1,1}(x;n)} =(2 \alpha -1) {\bf x^2}+ (2 (1-\alpha ) \lambda +2){\bf x}-(2 \alpha -3) \rho -2 \lambda ;\notag\\
&& 
{\bf H_{1}(x;0)}=(2 \alpha -3) \rho,\  {\bf H_{1}(x;n)}=\frac{n (-2 \alpha +n+2)}{(-2 \alpha +2 n+1) (-\alpha +n+1)^2},\ n\geq 1;\notag\\
&&{\bf \Phi(x)}={\bf x^2},\  {\bf M_{0,1}(x;n)}={\bf x} (2 \alpha -n-2)+\frac{2 \alpha -n-2}{\alpha -n-1}. \notag
\end{eqnarray}

%%%%%%%%%%%%%%%%%%%%%%%%%%%%%%%%%%%%%%%%%%%%%%%
\noindent {\bf Second structure relation}
%%%%%%%%%%%%%%%%%%%%%%%%%%%%%%%%%%%%%%%%%%%%%%%
\begin{eqnarray}
{\bf {\bf G_{0,2}(x;n)}P^{(1)}_{n-1}(x)+ {\bf G_{1,2}(x;n)}P^{(1)}_{n}(x)+ {\bf H_{2}(x;n)}P_{n}(x)=}&&\notag\\
{\bf {\bf \Phi^2(x)}P'_{n+1}(x)+{\bf M_{1,2}(x;n)}P'_{n+1}(x)+{\bf M_{0,2}(x;n)}P_{n+1}(x).}&&\notag
\end{eqnarray}
\begin{eqnarray}
&& {\bf G_{0,2}(x;0)}= 2 (2 \alpha -3) \rho\Big(  (2 \alpha -1) {\bf x^2}+ (2-2 (\alpha -1) \lambda ){\bf x}+(3-2 \alpha )\rho-2 \lambda  \Big), \notag\\
&& {\bf G_{0,2}(x;n)} = \frac{2 n (-2 \alpha +n+2)}{(-2 \alpha +2 n+1) (-\alpha +n+1)^2}\notag\\
&&\hspace{2.05cm} \Big(
(2 \alpha -1) {\bf x^2}+ 2(1- (\alpha -1) \lambda ){\bf x}+(3-2 \alpha ) \rho -2 \lambda 
 \Big),\ n\geq 1;\notag\\
&& {\bf G_{1,2}(x;n)} = \frac{1}{(-\alpha +n+1)}\Big({\bf x^3}(2 \alpha -1)\left(-2 (\alpha -1) +n^2-(\alpha -3) n\right) +\notag\\
&&{\bf x^2}\left(2 (\alpha -1)^2 \lambda -2 (\alpha -1)+n^2 (2-2 (\alpha -1)
   \lambda )+n (2 (\alpha -2) (\alpha -1) \lambda +3)\right)+\notag\\
&&{\bf x}\left(n^2 ((3-2 \alpha )\rho -2 \lambda )+n ((\alpha -1) (2 \alpha -3) \rho +2)\right)+
n ((3-2 \alpha ) \rho -2 \lambda )\Big),\notag\\
&& {\bf H_{2}(x;n)}= -\frac{n^2 (-2 \alpha +n+2) ({\bf x} (n-\alpha  +1)+1)}{(-2 \alpha +2 n+1) (-\alpha +n+1)^3},\quad 
{\bf \Phi^2(x)}={\bf x^4}, \notag\\
&& {\bf M_{1,2}(x;n)} = {\bf x^3 }(2 \alpha -n)+\frac{{\bf x^2 }(-2 \alpha +n+2)}{(-\alpha +n+1)},\notag\\
&& {\bf M_{0,2}(x;n)} = -\frac{-2 \alpha +n+2}{(-\alpha +n+1)^2} \Big({\bf x^2} \left((\alpha -1)^2+n^2-2 (\alpha -1) n\right)+n \Big).\notag 
\end{eqnarray}

%%%%%%%%%%%%%%%%%%%%%%%%%%%%%%%%%%%%%%%%%%%%%%%
\noindent {\bf Third structure relation}
%%%%%%%%%%%%%%%%%%%%%%%%%%%%%%%%%%%%%%%%%%%%%%%
\begin{eqnarray}
{\bf {\bf G_{0,3}(x;n)}P^{(1)}_{n-1}(x)+{\bf G_{1,3}(x;n)}P^{(1)}_{n}(x)+{\bf H_{3}(x;n)}P_{n}(x)=}&&\notag\\
{\bf{\bf \Phi^3(x)}P^{(3)}_{n+1}(x)+{\bf M_{2,3}(x;n)}P''_{n+1}(x)+{\bf M_{1,3}(x;n)}P'_{n+1}(x)+{\bf M_{0,3}(x;n)}P_{n+1}(x).}&&\notag
\end{eqnarray}
\begin{eqnarray}
&&{\bf G_{0,3}(x;0)}= 2 (2 \alpha -3) \rho \Big( {\bf x^3}\left(4 \alpha ^2-1\right) +{\bf x^2} \left(\left(-4 \alpha
   ^2+5 \alpha -1\right) \lambda +8 \alpha -3\right)\notag\\
   && +{\bf x} \left(\left(-4 \alpha ^2+10
   \alpha -6\right) \rho +(8-8 \alpha ) \lambda +4\right)+(6-4 \alpha ) \rho -4 \lambda \Big),\notag\\
&& {\bf G_{0,3}(x;n)} =  \frac{(-2 \alpha +n+2)}{(-2 \alpha +2 n+1) (-\alpha+n+1)^2}\Big(
 -2 {\bf x^3} (2\alpha -1) n (-2 \alpha +n-1)\notag\\
 &&+\frac{2 n {\bf x^2} }{ (-\alpha +n+1)} \Big(n^2
   ((2 \alpha -2) \lambda -2)+n \left(\left(-6 \alpha ^2+9 \alpha -3\right) \lambda +8
   \alpha -4\right)\notag\\
   &&+\left(4 \alpha ^3-9 \alpha ^2+6 \alpha -1\right) \lambda -8 \alpha ^2+11 \alpha -3\Big)\notag\\
   &&+\frac{2 n {\bf x} (-2 \alpha +n+2) \left(\left(-2 \alpha ^2+5 \alpha -3\right) \rho +(4-4 \alpha ) \lambda +n ((2 \alpha -3)
   \rho +2 \lambda )+2\right)}{(-\alpha +n+1)}\notag\\
    &&-\frac{2 n (-2 \alpha +n+2) (2
   \alpha  \rho +2 \lambda -3 \rho )}{ (-\alpha +n+1)}\Big),\ n\geq 1,\notag\\
&& {\bf G_{1,3}(x;n)}= {\bf x^4}(2 \alpha -1) (n+2) (n+3) - {\bf x^3}2 (n+2)  \Big(\left(-\alpha ^2+2 \alpha -1\right) \lambda +\alpha \notag\\
   && +n^2 ((\alpha -1) \lambda -1)+n
   \left(\left(-\alpha ^2+3 \alpha -2\right) \lambda -\alpha -1\right)-1\Big)/(-\alpha +n+1)\notag\\
   && -n {\bf x^2} \Big(n^3 ((2 \alpha -3) \rho +2 \lambda )+n^2
   \left(\left(-4 \alpha ^2+12 \alpha -9\right) \rho +2 \lambda -4\right)\notag\\
   &&+n
   \left(\left(-2 \alpha ^2+4 \alpha -2\right) \lambda +\left(2 \alpha ^3-11 \alpha
   ^2+18 \alpha -9\right) \rho -2 \alpha -5\right) \notag\\
   &&+\left(-2 \alpha ^2+4 \alpha -2\right) \lambda +8 \alpha ^2+\left(2 \alpha ^3-7 \alpha
   ^2+8 \alpha -3\right) \rho -8 \alpha\Big)/(\alpha -n-1)^2\notag\\
   &&+\Big(2 n {\bf x} \left(\left(4 \alpha ^2-8 \alpha +4\right) \lambda -4 \alpha +n^2 ((3-2 \alpha ) \rho -2
   \lambda )+n \left(\left(2 \alpha ^2-5 \alpha +3\right) \rho \right.\right. \notag\\
   &&\left.\left.+(1-\alpha ) \lambda +3\right)+4\right)\Big)/(\alpha -n-1)^2- \frac{n (-4 \alpha +3
   n+4) (2 \alpha  \rho +2 \lambda -3 \rho )}{(-\alpha +n+1)^2},\notag\\
&& {\bf H_{3}(x;n)}= \frac{n^2 (-2 \alpha +n+2)}{(-2 \alpha +2 n+1) (-\alpha +n+1)^2}\notag\\
&&\hspace{1.85cm}\Big((n-1){\bf x^2}+\frac{2n{\bf x}}{(-\alpha +n+1)}+\frac{n}{(-\alpha +n+1)^2} \Big),\quad
 {\bf \Phi^3(x) }= {\bf x^6},\notag\\
&& {\bf M_{2,3}(x;n)} = {\bf x^5} (2 \alpha -n+4)+\frac{{\bf x^4} (-2 \alpha +n+2)}{(-\alpha +n+1)},\notag\\
&& {\bf M_{1,3}(x;n)} = -2 {\bf x^4} (-4 \alpha +2 n+1)+\frac{2 {\bf x^3} (-2 \alpha +n+2)}{-\alpha +n+1}-
\frac{n {\bf x^2 }(-2\alpha +n+2)}{(-\alpha +n+1)^2},\notag\\
&& {\bf M_{0,3}(x;n)}=(-2 \alpha +n+2)\left(-2{\bf x^3}+\frac{n^2 {\bf x}}{(-\alpha +n+1)^2}+\frac{n^2}{(-\alpha +n+1)^3}
 \right) .\notag
\end{eqnarray}

%%%%%%%%%%%%%%%%%%%%%%%%%%%%%%%%%%%%%%%%%%%%%%%
\noindent {\bf Fourth structure relation}
%%%%%%%%%%%%%%%%%%%%%%%%%%%%%%%%%%%%%%%%%%%%%%%
\begin{eqnarray}
{\bf {\bf G_{0,4}(x;n)}P^{(1)}_{n-1}(x)+{\bf G_{1,4}(x;n)}P^{(1)}_{n}(x)+{\bf H_{4}(x;n)} P_{n}(x)=}&& \notag\\
{\bf {\bf\Phi^4(x)}P^{(4)}_{n+1}(x)+{\bf M_{3,4}(x;n)}P^{(3)}_{n+1}(x)+{\bf M_{2,4}(x;n)}P''_{n+1}(x)}&&\notag\\
{\bf +{\bf M_{1,4}(x;n)}P'_{n+1}(x)+{\bf M_{0,4}(x;n)}P_{n+1}(x).}&&\notag
\end{eqnarray}
\begin{eqnarray}
&&{\bf  G_{0,4}(x;0)} = 4 (2 \alpha -3) \rho\notag\\
&&\Big(2{\bf x^4} (2 \alpha -1) \left(\alpha ^2+\alpha
   +1\right) +{\bf x^3 }\left(12 \alpha ^2+\left(-4 \alpha ^3+5 \alpha ^2-2 \alpha +1\right) \lambda -3 \alpha \right)\notag\\
   &&+{\bf x^2 }\left(\left(-12 \alpha ^2+17 \alpha -5\right) \lambda +\left(-4 \alpha ^3+12 \alpha ^2-11
   \alpha +3\right) \rho +12 \alpha -5\right)\notag\\
&&  -4 {\bf x} \left(\left(2 \alpha ^2-5 \alpha +3\right) \rho +(3 \alpha -3) \lambda -1\right)-2 (2 \alpha  \rho +2 \lambda -3 \rho )\Big),\notag\\
%\end{eqnarray}
%%%%%%%%%%%%%%%%%%%%%%%%%%%%%%%%%%%%%%%%%%%%%%%%%%%%%%%
%\begin{eqnarray}
&&{\bf G_{0,4}(x;n)} =  \frac{1}{(-2 \alpha +2 n+1) (-\alpha +n+1)^2}\notag\\
&&\Big(4{\bf x^4 }
   (2 \alpha -1) n (-2 \alpha +n+2) \left(2 \alpha ^2+2 \alpha +n^2-2 \alpha 
   n+2\right)\notag\\
&&-\Big(2 n {\bf x^3} (-2 \alpha +n+2) \Big(4 n^3 (\alpha  \lambda -\lambda -1)-(12 \alpha
   -7) n^2 (\alpha  \lambda -\lambda -1)\notag\\
   &&+n \left(16 \alpha ^3 \lambda -29 \alpha ^2
   \lambda -32 \alpha ^2+18 \alpha  \lambda +23 \alpha -5 \lambda -6\right)\notag\\
   &&-2 (\alpha -1) \left(4 \alpha ^3 \lambda -5 \alpha ^2 \lambda -12 \alpha ^2+2 \alpha \lambda +3 \alpha -\lambda \right)\Big)/(-\alpha +n+1)\notag\\
   &&-\Big(2 n {\bf x^2} (-2 \alpha +n+2)
   \Big(2 n^4 (2\alpha  \rho +2 \lambda -3 \rho )-(8 \alpha -7) n^3 (2 \alpha  \rho +2 \lambda -3
   \rho )\notag\\
   &&+n^2 \left(28 \alpha ^3 \rho +44 \alpha ^2 \lambda -90 \alpha ^2 \rho -73
   \alpha  \lambda +92 \alpha  \rho -22 \alpha +29 \lambda -30 \rho +12\right)\notag\\
   &&\left. -(\alpha
   -1) n \left(24 \alpha ^3 \rho +56 \alpha ^2 \lambda -74 \alpha ^2 \rho -85 \alpha 
   \lambda +71 \alpha  \rho -44 \alpha +29 \lambda -21 \rho +21\right)\right.\notag\\
   && +2 (\alpha -1)^2 \left(4 \alpha ^3 \rho +12 \alpha ^2 \lambda -12 \alpha ^2 \rho -17
   \alpha  \lambda +11 \alpha  \rho -12 \alpha +5 \lambda -3 \rho +5\right)\Big)\notag\\
   &&/(-\alpha +n+1)^2\notag\\
&&  -\Big(4 n {\bf x} (-2 \alpha +n+2)
   \left(n^2 \left(\left(8 \alpha ^2-20 \alpha +12\right) \rho +(11 \alpha -11) \lambda
   -3\right)\right.\notag\\
   &&\left. +n \left(\left(-22 \alpha ^2+44 \alpha -22\right) \lambda +\left(-16 \alpha
   ^3+56 \alpha ^2-64 \alpha +24\right) \rho +6 \alpha -6\right)\right.\notag\\
   &&\left. -4 \alpha ^2+\left(12 \alpha ^3-36 \alpha ^2+36 \alpha -12\right) \lambda +\left(8
   \alpha ^4-36 \alpha ^3+60 \alpha ^2-44 \alpha +12\right) \rho \right.\notag\\
     &&\left. +8 \alpha -4\right)\Big)/(-\alpha
   +n+1)^2- \Big(2 n (-2 \alpha +n+2) \left(4 (\alpha -1)^2+3 n^2 \right.\notag\\
     &&\left. -6 (\alpha -1) n\right)
   (2 \alpha  \rho +2 \lambda -3 \rho )\Big)/ (-\alpha +n+1)^2\Big),\ n\geq 1;\notag
   \end{eqnarray}
     %%%%%%%%%%%%%%%%%%%%%%%%%%%%%%%%%%%%%%%%%%%%%%%%%%%%%%%
\begin{eqnarray}
&&{\bf G_{1,4}(x;n)} = {\bf x^5}(2 \alpha -1) (n+2) (n+3) (n+4) \notag\\
   &&-\Big({\bf x^4} (n+2) (n+3) \left(n^4 (2 \alpha  \rho +2 \lambda -3 \rho )\right.\notag\\
   &&+n^3
   \left(-4 \alpha ^2 \rho +2 \alpha  \lambda +16 \alpha  \rho +4 \lambda -15 \rho
   -6\right)\notag\\
   &&+n^2 \left(2 \alpha ^3 \rho -4 \alpha ^2 \lambda -19 \alpha ^2 \rho +14
   \alpha  \lambda +42 \alpha  \rho -6 \lambda -27 \rho -21\right)\notag\\
   &&+ n \left(6 \alpha ^3
   \rho -12 \alpha ^2 \lambda -29 \alpha ^2 \rho -8 \alpha ^2+28 \alpha  \lambda +44
   \alpha  \rho -2 \alpha -16 \lambda -21 \rho -18\right)\notag\\
   &&-2 (\alpha -1) (\alpha  \lambda -\lambda -1)\notag\\
   &&\left. +2 n^2 (\alpha  \lambda -\lambda -1) +n
   \left(-2 \alpha ^2 \lambda +6 \alpha  \lambda -4 \alpha -4 \lambda -1\right)\right)\Big)/(-\alpha +n+1)\notag\\
   &&-2n {\bf x^3} (\alpha -1) \left(2 \alpha ^2 \rho +8 \alpha ^2-4 \alpha  \lambda -5 \alpha  \rho +8
   \alpha +4 \lambda +3 \rho \right) /(-\alpha +n+1)^2\notag\\
   &&-\Big(n {\bf x^2} \left(3 n^4 (2 \alpha  \rho +2 \lambda -3 \rho )-3 n^3 \left(4 \alpha ^2 \rho +2
   \alpha  \lambda -12 \alpha  \rho -4 \lambda +9 \rho +2\right)\right.\notag\\
   &&\left. +n^2 \left(6 \alpha ^3
   \rho +8 \alpha ^2 \lambda -33 \alpha ^2 \rho -12 \alpha  \lambda +54 \alpha  \rho
   -12 \alpha +4 \lambda -27 \rho -9\right)\right.\notag\\
   &&\left. -3n (\alpha -1)  \left(8 \alpha ^2 \lambda
   -2 \alpha ^2 \rho -6 \alpha  \lambda +5 \alpha  \rho -16 \alpha -2 \lambda -3 \rho
   \right)\right.\notag\\
   &&\left. +4 (\alpha -1)^2 \left(4 \alpha ^2 \lambda -3 \alpha  \lambda -8 \alpha -\lambda
   +1\right)\right)\Big)/(-\alpha +n+1)^3\notag\\
   &&-\Big(n {\bf x} \left(3 n^3 (2 \alpha  \rho +2 \lambda -3 \rho )+n^2 \left(2 \alpha ^2 \rho
   +12 \alpha  \lambda -5 \alpha  \rho -12 \lambda +3 \rho -10\right)\right.\notag\\
   &&\left. -12 (\alpha -1) n
   \left(2 \alpha ^2 \rho +4 \alpha  \lambda -5 \alpha  \rho -4 \lambda +3 \rho
   -2\right)\right.\notag\\
   &&\left.+8 (\alpha -1)^2 \left(2 \alpha ^2 \rho +4 \alpha  \lambda -5 \alpha  \rho -4 \lambda +3
   \rho -2\right)\right)\Big)/(-\alpha +n+1)^3\notag\\
   &&-n \left(8 \alpha ^2-16 \alpha +5 n^2-12 \alpha  n+12 n+8\right) (2 \alpha  \rho
   +2 \lambda -3 \rho )/(-\alpha +n+1)^3,\notag\\
%\end{eqnarray}
%\begin{eqnarray}
&&{\bf H_{4}(x;n)} =\frac{1}{(-2 \alpha +2 n+1) (-\alpha +n+1)^2}\Big( -{\bf x^3}(n-2) (n-1) n^2  (-2 \alpha
   +n+2)\notag\\
   &&-\frac{3 {\bf x^2}  (n-1) n^3(-2 \alpha+n+2)}{(-\alpha +n+1)}-\frac{3 {\bf x}n^4  (-2\alpha +n+2)}{(-\alpha +n+1)^2}-\frac{n^4 (-2 \alpha +n+2)}{(-\alpha +n+1)^3}\Big),\notag\\
&&  {\bf \Phi^4(x)} =  {\bf x^8},\notag\\
&& {\bf M_{3,4}(x;n)}= {\bf x^7} (2 \alpha -n+10)+\frac{{\bf x^6 }(-2 \alpha +n+2)}{(-\alpha +n+1)},\notag\\
&&{\bf  M_{2,4}(x;n)}= -9 {\bf x^6 }(-2 \alpha +n-2)+\frac{(-2 \alpha +n+2)}{(-\alpha +n+1)}\left(6 {\bf x^5}-
\frac{n {\bf x^4 }}{(-\alpha +n+1)}\right),\notag\\
&&{\bf M_{1,4}(x;n)}= -6 {\bf x^5} (-6 \alpha +3 n+2)+\frac{(-2 \alpha +n+2)}{(-\alpha +n+1)}
\notag\\
&&\hspace{2,0cm}\left(6 {\bf x^4}+
\frac{(n-2) n {\bf x^3}}{(-\alpha+n+1)}+\frac{n^2 {\bf x^2} }{(-\alpha +n+1)^2}\right),\notag\\
&&{\bf M_{0,4}(x;n)}= -6 {\bf x^4 }(-2 \alpha +n+2)-\frac{(-2 \alpha +n+2)}{(-\alpha +n+1)^2}\notag\\
&&\hspace{2,0cm}\left(-(n-2) n^2 {\bf x^2}-\frac{2 n^3 {\bf x} }{(-\alpha +n+1)}+\frac{n^3}{(-\alpha +n+1)^2}\right).\notag
\end{eqnarray}

%%%%%%%%%%%%%%%%%%%%%%%%%%%%%%%%%%%
%%%%%%%%%%%%%%%%%%%%%%%%%%%%%%%%%%%
\subsection{The fourth-order linear differential equation}
%%%%%%%%%%%%%%%%%%%%%%%%%%%%%%%%%%%
%%%%%%%%%%%%%%%%%%%%%%%%%%%%%%%%%%%
\begin{eqnarray}
{\bf \mathcal{A}(x;n)P^{(4)}_{n+1}(x)
+\mathcal{B}(x;n)P^{(3)}_{n+1}(x)
+\mathcal{C}(x;n)P''_{n+1}(x)
+\mathcal{D}(x;n)P'_{n+1}(x)}&&\notag\\
{\bf +\mathcal{E}(x;n)P_{n+1}(x)=0,\; n\geq 0.}&&\notag
\end{eqnarray}
\quad\\
\noindent Let $c(x;n)$ be the greatest common factor between $\mathcal{A}$, $\mathcal{B}$, $\mathcal{C}$, $\mathcal{D}$, and $\mathcal{E}$. Then, we introduce the notation
$${\bf \widehat{\mathcal{A}}=\mathcal{A}/c\quad ,\quad \widehat{\mathcal{B}}=\mathcal{B}/c\quad ,\quad \widehat{\mathcal{C}}=\mathcal{C}/c\quad ,\quad 
\widehat{\mathcal{D}}=\mathcal{D}/c\quad ,\quad \widehat{\mathcal{E}}=\mathcal{E}/c.}$$
\noindent Greatest common factor between $\mathcal{A}$, $\mathcal{B}$, $\mathcal{C}$, $\mathcal{D}$, and $\mathcal{E}$: 
\begin{eqnarray}
{\bf  c(x;0)}=4 (3-2 \alpha )^2 \rho ^2 {\bf x^6}\ ,\  {\bf c(x;n)}=\frac{4 n^2 {\bf x^6} (-2 \alpha +n+2)^2}{(-2 \alpha +2 n+1)^2 (-\alpha +n+1)^4},\ n\geq 1. \notag
\end{eqnarray}
%%%%%%%%%%%%%%%%%%%%%%%%%%%%%%%%%%%%%%%%%%%%%%%
\begin{eqnarray}
&&{\bf \widehat{\mathcal{A}}(x;n)}= {\bf x^8} (2 \alpha -1)^2 (n+1) (-2 \alpha +n+2) +2 {\bf x^7}(2 \alpha -1)  \left(\left(3 \alpha ^2-7 \alpha +4\right) \lambda \right.\notag\\
&&\left. -5 \alpha +n^2 ((2-2 \alpha ) \lambda
   +2)+n \left(\left(4 \alpha ^2-10 \alpha +6\right) \lambda -4 \alpha +6\right)+5\right)\notag\\
   &&+{\bf x^6 }\left(\left(28 \alpha ^2-60 \alpha +26\right) \lambda +\left(-4 \alpha ^3+13 \alpha ^2-14
   \alpha +5\right) \lambda ^2\right.\notag\\
   &&+\left(8 \alpha ^3-36 \alpha ^2+46 \alpha -15\right) \rho  -16 \alpha \notag\\
   &&\left.
  + n^2 \left(\left(4 \alpha ^2-8 \alpha +4\right) \lambda ^2+\left(-8
   \alpha ^2+16 \alpha -6\right) \rho +(12-16 \alpha ) \lambda +4\right)\right.\notag\\
   && +n
   \left(\left(32 \alpha ^2-72 \alpha +36\right) \lambda +\left(-8 \alpha ^3+28 \alpha
   ^2-32 \alpha +12\right) \lambda ^2+\left(16 \alpha ^3-56 \alpha ^2 \right.\right.\notag\\
   &&\left.\left.\left. +60 \alpha
   -18\right) \rho -8 \alpha +12\right)+11\right)\notag\\
   &&+ 2 {\bf x^5} \left(\left(-4 \alpha ^2+10 \alpha -6\right) \lambda ^2+\left(6 \alpha ^2-15 \alpha +9\right)
   \rho \right.\notag\\
   && +\lambda  \left(\left(-2 \alpha ^3+9 \alpha ^2-13 \alpha +6\right) \rho +10
   \alpha -10\right)+ n^2 \left(\lambda  \left(\left(4 \alpha ^2-10 \alpha +6\right)
   \rho -4\right) \right.\notag\\
   &&\left. \left. +(4 \alpha -4) \lambda ^2+(6-4 \alpha ) \rho \right)+ n \left(\left(-8
   \alpha ^2+20 \alpha -12\right) \lambda ^2+\left(8 \alpha ^2-24 \alpha +18\right)
   \rho \right.\right.\notag\\
   &&\left.\left. +\lambda  \left(\left(-8 \alpha ^3+32 \alpha ^2-42 \alpha +18\right) \rho +8
   \alpha -12\right)\right)-2\right)\notag\\
   &&- {\bf x^4} (2 \alpha  \rho +2 \lambda -3 \rho ) \left((2 \alpha -2) \lambda \right.\notag\\
   &&\left. +n^2 ((3-2 \alpha ) \rho -2 \lambda )+n \left(\left(4 \alpha
   ^2-12 \alpha +9\right) \rho +(4 \alpha -6) \lambda \right)-2\right),\notag 
\end{eqnarray}
%%%%%%%%%%%%%%%%%%%%%%%%%%%%%%%%%%%%%%
\begin{eqnarray}
&&{\bf \widehat{\mathcal{B}}(x;n) } =  6 {\bf x^7}(2 \alpha -1)^2 (n+1)  (-2 \alpha +n+2)+ 14 {\bf x^6} (2 \alpha -1) \left(\left(3 \alpha ^2-7 \alpha +4\right) \lambda \right.\notag\\
&& \left. -5 \alpha +n^2 ((2-2 \alpha ) \lambda
   +2)+n \left(\left(4 \alpha ^2-10 \alpha +6\right) \lambda -4 \alpha +6\right)+5\right)\notag\\
   &&+8 {\bf x^5} \left(\left(28 \alpha ^2-60 \alpha +26\right) \lambda +\left(-4 \alpha ^3+13 \alpha ^2-14
   \alpha +5\right) \lambda ^2\right.\notag\\
   &&+\left(8 \alpha ^3-36 \alpha ^2+46 \alpha -15\right) \rho  -16 \alpha \notag\\
   &&\left.
  + n^2 \left(\left(4 \alpha ^2-8 \alpha +4\right) \lambda ^2+\left(-8
   \alpha ^2+16 \alpha -6\right) \rho +(12-16 \alpha ) \lambda +4\right)\right.\notag\\
  && +n \left(\left(32 \alpha ^2-72 \alpha +36\right) \lambda +\left(-8 \alpha ^3+28 \alpha
   ^2-32 \alpha +12\right) \lambda ^2 \right.\notag\\
   &&\left.\left. +\left(16 \alpha ^3-56 \alpha ^2+60 \alpha
   -18\right) \rho-8 \alpha +12\right)+11\right)\notag\\
   &&+18 {\bf x^4} \left(\left(-4 \alpha ^2+10 \alpha -6\right) \lambda ^2+\left(6 \alpha ^2-15 \alpha +9\right)
   \rho +\lambda  \left(\left(-2 \alpha ^3+9 \alpha ^2 \right.\right.\right.\notag\\
   &&\left.\left. -13 \alpha +6\right) \rho +10
   \alpha -10\right)+ n^2 \left(\lambda  \left(\left(4 \alpha ^2-10 \alpha +6\right)
   \rho -4\right)+(4 \alpha -4) \lambda ^2\right.\notag\\
   &&\left.+(6-4 \alpha ) \rho \right)
    +n \left(\left(-8
   \alpha ^2+20 \alpha -12\right) \lambda ^2+\left(8 \alpha ^2-24 \alpha +18\right)
   \rho  \right.\notag\\
   &&\left.\left. +\lambda  \left(\left(-8 \alpha ^3+32 \alpha ^2-42 \alpha +18\right) \rho+8
   \alpha -12\right)\right)-2\right) \notag\\
   &&-10 {\bf x^3 }(2 \alpha  \rho +2 \lambda
   -3 \rho ) \left((2 \alpha -2) \lambda +n^2 ((3-2 \alpha ) \rho -2 \lambda ) \right.\notag\\
   &&\left. +n \left(\left(4 \alpha
   ^2-12 \alpha +9\right) \rho +(4 \alpha -6) \lambda \right)-2\right),\notag 
\end{eqnarray}
%%%%%%%%%%%%%%%%%%%%%%%%%%%%%%%%%%%%%%
\begin{eqnarray}
&&{\bf \widehat{\mathcal{C}}(x;n) }=  -2{\bf x^6} (n+1) (n-2 \alpha +2) (2 \alpha -1)^2 \left(n^2+n(-2 \alpha  +3 )+2 \alpha ^2-4 \alpha
   -1\right) \notag\\
   &&-2 {\bf x^5}(2 \alpha -1) \left(-36 \alpha ^3+96 \alpha ^2+\lambda  \left(12 \alpha ^4-56 \alpha ^3+\left(16 \alpha
   ^2-40 \alpha +24\right) \rho \right.\right.\notag\\
   &&\left. +71 \alpha ^2-13 \alpha -30\right)+(16 \alpha -16)
   \lambda ^2+ (24-16 \alpha ) \rho -45 \alpha +n^4 ((4-4 \alpha ) \lambda +4) \notag\\
   && +n^3
   \left(\left(16 \alpha ^2-40 \alpha +24\right) \lambda -16 \alpha +24\right)+ n^2
   \left(\left(-16 \alpha ^2+48 \alpha -36\right) \rho ^2+32 \alpha ^2 \right.\notag\\
   &&\left. +\lambda 
   \left(-24 \alpha ^3+96 \alpha ^2+(48-32 \alpha ) \rho -109 \alpha +37\right)-84
   \alpha -16 \lambda ^2+41\right)+\notag\\
  && n \left(-32 \alpha ^3+120 \alpha ^2+\left(32 \alpha
   ^3-144 \alpha ^2+216 \alpha -108\right) \rho ^2+\lambda  \left(16 \alpha ^4-88
   \alpha ^3 \right.\right.\notag\\
   &&\left.\left. +\left(64 \alpha ^2-192 \alpha +144\right) \rho +146 \alpha ^2-77 \alpha
   +3\right)+\left.(32 \alpha -48) \lambda ^2-118 \alpha +15\right)-15\right) \notag\\
   %%%%%%%%%%%%%%%%%%%%%%%%%
   &&-2{\bf x^4 }\left(-128 \alpha ^3+298 \alpha ^2+\left(104 \alpha ^4-432 \alpha ^3+488 \alpha ^2-63 \alpha
   -61\right) \lambda +\left(-8 \alpha ^5+46 \alpha ^4 \right.\right.\notag\\
   &&\left. -77 \alpha ^3+28 \alpha ^2+31
   \alpha -20\right) \lambda ^2+\left(16 \alpha ^5-120 \alpha ^4+292 \alpha ^3-234
   \alpha ^2-2 \alpha +30\right) \rho  \notag\\
   &&\left. -127 \alpha+n^4 \left(\left(4 \alpha ^2-8 \alpha
   +4\right) \lambda ^2+\left(-8 \alpha ^2+16 \alpha -6\right) \rho +(12-16 \alpha )
   \lambda +4\right)\right.\notag\\
   && +n^3 \left(\left(64 \alpha ^2-144 \alpha +72\right) \lambda
   +\left(-16 \alpha ^3+56 \alpha ^2-64 \alpha +24\right) \lambda ^2+\left(32 \alpha
   ^3-112 \alpha ^2 \right.\right.\notag\\
   &&\left.\left.\left. +120 \alpha -36\right) \rho -16 \alpha +24\right)+n^2 \left(64
   \alpha ^2+\left(-128 \alpha ^3+440 \alpha ^2-410 \alpha +106\right) \lambda \right.\right.\notag\\
   && +\left(24 \alpha ^4-120 \alpha ^3+193 \alpha ^2-122 \alpha +25\right) \lambda
   ^2\notag\\
   &&+\left(-48 \alpha ^4+240 \alpha ^3-404 \alpha ^2+268 \alpha -60\right) \rho \notag\\
   &&\left. -128
   \alpha +43\right)+ n \left(-96 \alpha ^3+304 \alpha ^2+\left(128 \alpha ^4-592 \alpha
   ^3+844 \alpha ^2-362 \alpha -6\right) \lambda \right.\notag\\
   && +\left(-16 \alpha ^5+104 \alpha ^4-210
   \alpha ^3+139 \alpha ^2 +16 \alpha -33\right) \lambda ^2+\left(32 \alpha ^5-208
   \alpha ^4+472 \alpha ^3 \right. \notag\\
   &&\left.\left.\left. -452 \alpha ^2+168 \alpha -18\right) \rho  -254 \alpha
   +21\right)-13\right)\notag\\
   &&- 2 {\bf x^3}\left(-112 \alpha ^2+\left(-32 \alpha ^4+148 \alpha ^3 -186 \alpha ^2 +20 \alpha +50\right)
   \lambda ^2\right. \notag\\
   &&+\left(56 \alpha ^4-308 \alpha ^3+484 \alpha ^2-202 \alpha -30\right) \rho +\lambda  \left(176 \alpha ^3-548 \alpha ^2 \right.\notag\\
   &&\left. +\left(-8 \alpha ^5+60 \alpha ^4-140
   \alpha ^3+90 \alpha ^2+58 \alpha -60\right) \rho +364 \alpha +8\right)+168 \alpha \notag\\
   &&\left. +n^4 \left(\lambda  \left(\left(8 \alpha ^2-20 \alpha +12\right) \rho -8\right)+(8
   \alpha -8) \lambda ^2+(12-8 \alpha ) \rho \right)\right.\notag\\
   &&+n^3 \left(\left(-32 \alpha ^2+80 \alpha -48\right) \lambda ^2+
   \left(32 \alpha ^2-96 \alpha +72\right) \rho \right.\notag\\
   &&\left.+\lambda  \left(\left(-32 \alpha ^3+128 \alpha ^2-168 \alpha +72\right) \rho+32 \alpha
   -48\right)\right) \notag\\
   &&+n^2 \left(\left(64 \alpha ^3-240 \alpha ^2+230 \alpha -54\right)
   \lambda ^2+\left(-80 \alpha ^3+312 \alpha ^2-342 \alpha +81\right) \rho \right.\notag\\
   &&\left.\left. +\lambda 
   \left(-128 \alpha ^2+\left(48 \alpha ^4-264 \alpha ^3+470 \alpha ^2-311 \alpha
   +57\right) \rho +280 \alpha -94\right)+32 \alpha -24\right)\right.\notag\\
   && +n \left(-64 \alpha
   ^2+\left(-64 \alpha ^4+320 \alpha ^3-460 \alpha ^2+150 \alpha +54\right) \lambda
   ^2+\left(96 \alpha ^4-480 \alpha ^3+756 \alpha ^2 \right.\right.\notag\\
   &&\left. -324 \alpha -81\right) \rho \notag\\
   &&+\lambda  \left(192 \alpha ^3-656 \alpha ^2+\left(-32 \alpha ^5+224 \alpha ^4-484
   \alpha ^3+304 \alpha ^2+141 \alpha -153\right) \rho\right.\notag\\
   &&\left.\left.\left.  +596 \alpha -66\right)+144
   \alpha -72\right)-26\right) \notag\\
   %%%%%%%%%%%%%%%%%%%%%%%
   &&- 2 {\bf x^2}\left(\left(-48 \alpha ^3+166 \alpha ^2-140 \alpha +22\right) \lambda ^2+\left(72 \alpha
   ^3-276 \alpha ^2 +288 \alpha -54\right) \rho\right.\notag\\
   && +\lambda  \left(144 \alpha ^2+\left(-24
   \alpha ^4+132 \alpha ^3-216 \alpha ^2+108 \alpha \right) \rho -296 \alpha
   +92\right)-48 \alpha \notag\\
   && +n^4 \left(\left(4 \alpha ^2-12 \alpha +9\right) \rho ^2+(8
   \alpha -12) \lambda  \rho +4 \lambda ^2\right)+ n^3 \left(\left(-32 \alpha ^2+96
   \alpha -72\right) \lambda  \rho \right.\notag\\
   &&\left. +\left(-16 \alpha ^3+72 \alpha ^2-108 \alpha
   +54\right) \rho ^2+(24-16 \alpha ) \lambda ^2\right)+n^2 \left(\left(64 \alpha
   ^2-152 \alpha +44\right) \lambda ^2 \right.\notag\\
   && +\left(-48 \alpha ^2+112 \alpha -60\right) \rho
   +\lambda  \left(\left(80 \alpha ^3-328 \alpha ^2+352 \alpha -60\right) \rho -64
   \alpha +56\right)\notag\\
   &&\left. +\left(24 \alpha ^4-144 \alpha ^3+274 \alpha ^2-174 \alpha
   +9\right) \rho ^2+8\right)\notag\\
   &&+n \left(\left(-96 \alpha ^3+352 \alpha ^2-328 \alpha
   +24\right) \lambda ^2 +\left(96 \alpha ^3-368 \alpha ^2+456 \alpha -180\right) \rho \right.\notag\\
   &&+\lambda  \left(128 \alpha ^2+\left(-96 \alpha ^4+512 \alpha ^3-824 \alpha ^2+312 \alpha +144\right) \rho-304 \alpha +168\right) \notag\\
   &&\left.\left. +\left(-16 \alpha ^5+120 \alpha
   ^4-260 \alpha ^3+90 \alpha ^2+270 \alpha -216\right) \rho ^2-16 \alpha +24\right)+34\right) \notag\\
   %%%%%%%%%%%%%%%%%%%%%%%%%%%%
    &&-8{\bf  x}\left(\left(-8 \alpha ^2+18 \alpha -10\right) \lambda ^2+\left(10 \alpha ^2-25 \alpha
   +15\right) \rho \right.\notag\\
   && +\lambda  \left(\left(-6 \alpha ^3+23 \alpha ^2-29 \alpha +12\right)
   \rho + 14 \alpha -14\right)\notag\\
   &&+n^2 \left(\lambda  \left(\left(12 \alpha ^2-30 \alpha
   +18\right) \rho -4\right)+\left(4 \alpha ^3-16 \alpha ^2+21 \alpha -9\right) \rho
   ^2+(8 \alpha -8) \lambda ^2\right.\notag\\
   &&\left. +(6-4 \alpha ) \rho \right)+n \left(\left(-16 \alpha
   ^2+40 \alpha -24\right) \lambda ^2+\left(8 \alpha ^2-24 \alpha +18\right) \rho\right.\notag\\
   &&+\lambda  \left(\left(-24 \alpha ^3+96 \alpha ^2 -126 \alpha +54\right) \rho +8
   \alpha -12\right)+\notag\\
   &&\left.\left.\left(-8 \alpha ^4+44 \alpha ^3-90 \alpha ^2+81 \alpha -27\right)
   \rho ^2\right)-2\right)  \notag\\
   && +4 (2 \lambda +2 \alpha  \rho -3 \rho ) \left((2 \alpha -2) \lambda +n^2 ((3-2 \alpha ) \rho -2 \lambda )\right.\notag\\
   &&\left.+n \left(\left(4 \alpha
   ^2-12 \alpha +9\right) \rho +(4 \alpha -6) \lambda \right)-2\right),\notag 
\end{eqnarray}
%%%%%%%%%%%%%%%%%%%%%%%%%%%%%%%%%%%%%%
\begin{eqnarray}
&&{\bf \widehat{\mathcal{D}}(x;n)} =   -2 {\bf x^5}(n+1) (n-2 \alpha +2) (2 \alpha -1)^2 \left(n^2-+n(2 \alpha  +3 )+2 \alpha ^2-4 \alpha
   +2\right) -\notag\\
   %%%%%%%%%%%%%%%%%%%%%%%%%%%%%%%
   &&2{\bf x^4} (2 \alpha -1) \left(-24 \alpha ^3+\left(32 \alpha ^2-48 \alpha +16\right) \lambda ^2+78 \alpha ^2+ \lambda 
   \left(16 \alpha ^4-70 \alpha ^3+116 \alpha ^2 \right.\right.\notag\\
   &&\left. +\left(32 \alpha ^3-96 \alpha ^2+88
   \alpha -24\right) \rho -86 \alpha +24\right)-84 \alpha +n^4 ((6-6 \alpha ) \lambda
   +6)\notag\\
   &&+n^3 \left(\left(24 \alpha ^2-60 \alpha +36\right) \lambda -24 \alpha
   +36\right)\notag\\
   &&+n^2 \left(32 \alpha ^2+\lambda  \left(-36 \alpha ^3+\left(-80 \alpha
   ^2+200 \alpha -120\right) \rho +139 \alpha ^2-181 \alpha +62\right)\right.\notag\\
   &&\left.+\left(-48 \alpha^3+192 \alpha ^2-252 \alpha +108\right) \rho ^2+(32-32 \alpha ) \lambda ^2+(24-16
   \alpha ) \rho -107 \alpha +81\right)\notag\\
   &&+n \left(-16 \alpha ^3+\left(64 \alpha ^2-160
   \alpha +96\right) \lambda ^2+\left(32 \alpha ^2-96 \alpha +72\right) \rho +94 \alpha^2 \right. \notag\\
  && +\lambda  \left(24 \alpha ^4-122 \alpha ^3+239 \alpha ^2+ \left(160 \alpha ^3-640
   \alpha ^2+840 \alpha -360\right) \rho -181 \alpha +24\right) \notag\\
  &&\left.\left. +\left(96 \alpha ^4-528
   \alpha ^3+1080 \alpha ^2-972 \alpha +324\right) \rho ^2-159 \alpha +81\right)+30\right)\notag\\
   %%%%%%%%%%%%%%%%%%%%%%%%%%%%%%%
   &&-4 {\bf x^3}\left(-24 \alpha ^3+70 \alpha ^2  +\left(48 \alpha ^4-208 \alpha ^3+328 \alpha ^2-218 \alpha
   +50\right) \lambda \right.\notag\\
   &&+\left(-8 \alpha ^5+46 \alpha ^4-101 \alpha ^3+106 \alpha ^2-53
   \alpha +10\right) \lambda ^2\notag\\
   &&+\left(8 \alpha ^5-60 \alpha ^4+170 \alpha ^3-225 \alpha
   ^2+137 \alpha -30\right) \rho -63 \alpha +n^4 \left(\left(4 \alpha ^2-8 \alpha
   +4\right) \lambda ^2 \right.\notag\\
   &&\left. +\left(-8 \alpha ^2+16 \alpha -6\right) \rho +(12-16 \alpha )
   \lambda +4\right)+n^3 \left(\left(64 \alpha ^2-144 \alpha +72\right) \lambda \right.\notag\\
   &&\left. +\left(-16 \alpha ^3+56 \alpha ^2-64 \alpha +24\right) \lambda ^2+\left(32 \alpha
   ^3-112 \alpha ^2+120 \alpha -36\right) \rho -16 \alpha +24\right)\notag\\
   && +n^2 \left(16
   \alpha ^2+\left(-96 \alpha ^3+350 \alpha ^2-427 \alpha +157\right) \lambda +\left(24
   \alpha ^4-120 \alpha ^3+217 \alpha ^2 \right.\right.\notag\\
   &&\left.\left. -170 \alpha +49\right) \lambda ^2 +\left(-48
   \alpha ^4+220 \alpha ^3-392 \alpha ^2+309 \alpha -81\right) \rho -74 \alpha
   +52\right) \notag\\
   && +n \left(52 \alpha ^2+\left(64 \alpha ^4-316 \alpha ^3+608 \alpha ^2-515
   \alpha +147\right) \lambda  \right.\notag\\
   &&+\left(-16 \alpha ^5+104 \alpha ^4 -258 \alpha ^3+307
   \alpha ^2-176 \alpha +39\right) \lambda ^2 \notag\\
   &&\left.\left. +\left(32 \alpha ^5-168 \alpha ^4+388
   \alpha ^3-498 \alpha ^2 +333 \alpha-81\right) \rho -110 \alpha +48\right)+17\right)\notag\\
   %%%%%%%%%%%%%%%%%%%%%%%%%%%%%%%%%
   && -2 {\bf x^2}\left(-16 \alpha ^2+\left(-48 \alpha ^4+236 \alpha ^3-410 \alpha ^2+292 \alpha 
   -70\right)\lambda ^2\right.\notag\\
   && +\left(32 \alpha ^4-176 \alpha ^3+328 \alpha ^2-244 \alpha +60\right) \rho +\lambda  \left(96 \alpha ^3-308 \alpha ^2 \right.\notag\\
   &&\left. +\left(-16 \alpha ^5+120 \alpha ^4-340
   \alpha ^3+450 \alpha ^2-274 \alpha +60\right) \rho +294 \alpha -82\right)+24 \alpha
   \notag\\
   &&\left. +n^4 \left(\lambda  \left(\left(20 \alpha ^2-50 \alpha +30\right) \rho
   -20\right)+(20 \alpha -20) \lambda ^2+(30-20 \alpha ) \rho \right)\right.\notag\\
   && +n^3
   \left(\left(-80 \alpha ^2+200 \alpha -120\right) \lambda ^2+\left(80 \alpha ^2-240
   \alpha +180\right) \rho \right. \notag\\
   &&\left. +\lambda  \left(\left(-80 \alpha ^3+320 \alpha ^2-420 \alpha
   +180\right) \rho +80 \alpha -120\right)\right)\notag\\
   &&\left. +n^2 \left(\left(128 \alpha ^3-504
   \alpha ^2+634 \alpha -258\right) \lambda ^2+\left(-136 \alpha ^3+558 \alpha ^2-769
   \alpha +357\right) \rho \right.\right.\notag\\
   &&\left.\left. +\lambda  \left(-128 \alpha ^2+\left(120 \alpha ^4-650
   \alpha ^3+1295 \alpha ^2-1125 \alpha +360\right) \rho +370 \alpha -262\right)-16
   \alpha -6\right)\right.\notag\\
   &&\left. +n \left(32 \alpha ^2+\left(-96 \alpha ^4+512 \alpha ^3-980 \alpha
   ^2+798 \alpha -234\right) \lambda ^2+\left(112 \alpha ^4-564 \alpha ^3+1052 \alpha
   ^2 \right.\right.\right.\notag\\
   &&\left. -861 \alpha +261\right) \rho +\lambda  \left(96 \alpha ^3-404 \alpha ^2+\left(-80
   \alpha ^5+540 \alpha ^4-1420 \alpha ^3+1815 \alpha ^2-1125 \alpha \right.\right.\notag\\
   &&\left.\left.\left.\left.+270\right) \rho
   +554 \alpha -246\right)-36 \alpha -18\right)-8\right)\notag\\
   %%%%%%%%%%%%%%%%%%%%%%%%%%%%%%%%%
  && - 2 {\bf x}\left(\left(-48 \alpha ^3+168 \alpha ^2-168 \alpha +48\right) \lambda ^2 +\left(16 \alpha
   ^3-48 \alpha ^2+44 \alpha -12\right) \rho  \right.\notag\\
   && +\lambda \left(32 \alpha ^2+\left(-32
   \alpha ^4+176 \alpha ^3-328 \alpha ^2+244 \alpha -60\right) \rho-48 \alpha
   +16\right) \notag\\
   && +n^4 \left(\left(12 \alpha ^2-36 \alpha +27\right) \rho ^2+(24 \alpha -36)
   \lambda  \rho +12 \lambda ^2\right)\notag\\
   &&+n^3 \left(\left(-96 \alpha ^2+288 \alpha
   -216\right) \lambda  \rho +\left(-48 \alpha ^3+216 \alpha ^2-324 \alpha +162\right)
   \rho ^2\right.\notag\\
   &&\left.+(72-48 \alpha ) \lambda ^2\right)\notag\\
   && +n^2 \left(\left(96 \alpha ^2-264 \alpha
   +156\right) \lambda ^2+\left(-16 \alpha ^2+48 \alpha -36\right) \rho +\lambda 
   \left(\left(176 \alpha ^3-760 \alpha ^2 \right.\right.\right.\notag\\
   &&\left.\left.\left. +1040 \alpha -444\right) \rho
   +8\right)+\left(72 \alpha ^4-432 \alpha ^3+942 \alpha ^2-882 \alpha +297\right) \rho
   ^2-8\right)\notag\\
   &&\left. +n \left(\left(-96 \alpha ^3+384 \alpha ^2-456 \alpha +144\right) \lambda
   ^2+\left(32 \alpha ^3-144 \alpha ^2+216 \alpha -108\right) \rho \right.\right.\notag\\
   &&\left.\left. +\lambda 
   \left(\left(-160 \alpha ^4+896 \alpha ^3-1768 \alpha ^2+1416 \alpha -360\right) \rho
   -16 \alpha +24\right)\right.\right.\notag\\
   &&\left.\left. +\left(-48 \alpha ^5+360 \alpha ^4-1020 \alpha ^3+1350 \alpha
   ^2-810 \alpha +162\right) \rho ^2+16 \alpha -24\right)\right)
   \notag\\
   &&+4 (2 \lambda +2 \alpha  \rho -3 \rho ) \left(\left(4 \alpha ^2-6 \alpha +2\right) \lambda +n^2 \left(\left(-6 \alpha ^2+15 \alpha
   -9\right) \rho +(4-4 \alpha ) \lambda -2\right)\right.\notag\\
   &&\left. +n \left(\left(8 \alpha ^2-20 \alpha
   +12\right) \lambda +\left(12 \alpha ^3-48 \alpha ^2+63 \alpha -27\right) \rho +4
   \alpha -6\right) \right),\notag 
\end{eqnarray}
%%%%%%%%%%%%%%%%%%%%%%%%%%%%%%%%%%%%%%
\begin{eqnarray}
&&{\bf \widehat{\mathcal{E}}(x;n) } =  {\bf x^4}(n+1)^3 (n-2 \alpha +2)^3 (2 \alpha -1)^2 - 
2{\bf x^3} (n+1) (n-2 \alpha +2) (2 \alpha -1)\notag\\
   &&\left(-12 \alpha ^2 +\left(4 \alpha ^3-17 \alpha ^2+23 \alpha -10\right) \lambda +27 \alpha
   +n^4 ((2 \alpha -2) \lambda -2)\right.\notag\\
   && +n^3 \left(\left(-8 \alpha ^2+20 \alpha -12\right)
   \lambda +8 \alpha -12\right)+n^2 \left(-8 \alpha ^2+\left(8 \alpha ^3-37 \alpha
   ^2+56 \alpha -27\right) \lambda \right.\notag\\
   &&\left.\left. +35 \alpha -30\right)+n \left(-22 \alpha ^2+\left(10
   \alpha ^3-43 \alpha ^2+60 \alpha -27\right) \lambda +57 \alpha -36\right)-15\right) \notag\\
   && +{\bf x^2} (n+1)(n-2 \alpha +2)\left(48 \alpha ^2+\left(-48 \alpha ^3+188 \alpha ^2-222 \alpha +70\right)
   \lambda  \right.\notag\\
   && +\left(-16 \alpha ^4+104 \alpha ^3-236 \alpha ^2+214 \alpha-60\right) \rho -92 \alpha  \notag\\
   && +n^4 \left(\left(4 \alpha ^2-8 \alpha
   +4\right) \lambda ^2+\left(-8 \alpha ^2+16 \alpha -6\right) \rho+(12-16\alpha) \lambda +4\right) \notag\\
   &&+n^3 \left(\left(64 \alpha ^2-144 \alpha
   +72\right) \lambda +\left(-16 \alpha ^3+56 \alpha ^2-64 \alpha
   +24\right) \lambda ^2 \right.\notag\\
   &&\left. +\left(32 \alpha ^3-112 \alpha ^2+120 \alpha
   -36\right) \rho -16 \alpha +24\right)\notag\\
   &&+n^2 \left(16 \alpha ^2+\left(-64
   \alpha ^3+292 \alpha ^2 -424 \alpha +166\right) \lambda \right.\notag\\
   &&+\left(16 \alpha
   ^4-84 \alpha ^3+161 \alpha ^2-134 \alpha +41\right) \lambda ^2\notag\\
   &&\left.\left. +\left(-32
   \alpha ^4+168 \alpha ^3-364 \alpha ^2+342 \alpha -99\right) \rho -88
   \alpha +59\right)\right.\notag\\
   &&+n \left(80 \alpha ^2+\left(-104 \alpha ^3+428 \alpha
   ^2-524 \alpha +174\right) \lambda\right. \notag\\
   &&+\left(8 \alpha ^4-38 \alpha ^3+67
   \alpha ^2-52 \alpha +15\right) \lambda ^2\notag\\
   &&\left.\left. +\left(-16 \alpha ^4+176 \alpha
   ^3-480 \alpha ^2+468 \alpha -135\right) \rho -166 \alpha +69\right)+34\right) \notag\\
   && +2{\bf x}  (n+1) (n-2 \alpha +2)\left(\left(-24 \alpha ^2+60 \alpha -24\right) \lambda +\left(-16 \alpha ^3+72
   \alpha ^2-92 \alpha +30\right) \rho  \right.\notag\\
   && +n^4 \left(\lambda 
   \left(\left(4 \alpha ^2-10 \alpha +6\right) \rho -4\right)+(4 \alpha -4)
   \lambda ^2+(6-4 \alpha ) \rho \right)\notag\\
   &&+n^3 \left(\left(-16 \alpha ^2+40 \alpha -24\right) \lambda ^2 +\left(16 \alpha ^2-48 \alpha +36\right)
   \rho \right.\notag\\
   &&\left.+\lambda  \left(\left(-16 \alpha ^3+64 \alpha ^2-84 \alpha
   +36\right) \rho +16 \alpha -24\right)+8 \alpha\right)\notag\\
   &&\left. +n^2 \left(\left(16 \alpha
   ^3-68 \alpha ^2+96 \alpha -44\right) \lambda ^2+\left(-16 \alpha ^3+82
   \alpha ^2-131 \alpha +66\right) \rho \right.\right.\notag\\
   &&\left.\left. +\lambda  \left(-16 \alpha
   ^2+\left(16 \alpha ^4-86 \alpha ^3+177 \alpha ^2-164 \alpha +57\right)
   \rho +70 \alpha -56\right)-6\right)\right.\notag\\
   &&\left. +n \left(\left(8 \alpha ^3-36 \alpha
   ^2+52 \alpha -24\right) \lambda ^2+\left(-20 \alpha ^3+76 \alpha ^2-93
   \alpha +36\right) \rho \right.\right.\notag\\
   &&\left.\left. +\lambda  \left(-44 \alpha ^2+\left(-4 \alpha
   ^4+12 \alpha ^3-5 \alpha ^2-12 \alpha +9\right) \rho +106 \alpha
   -60\right)+12 \alpha -18\right)-4\right) \notag\\
   && +(n+1) (n-2 \alpha +2) (2 \lambda +2
   \alpha  \rho -3 \rho ) \left(-8 \alpha +n^4 ((2 \alpha -3) \rho +2 \lambda )\right.\notag\\
   && +n^3 \left(\left(-8 \alpha^2+24 \alpha -18\right) \rho +(12-8 \alpha ) \lambda \right)\notag\\
   &&+n^2 \left(\left(8 \alpha ^2-26 \alpha +16\right) \lambda +\left(8 \alpha
   ^3-32 \alpha ^2+40 \alpha -15\right) \rho +6\right)\notag\\
   &&\left. +n \left(\left(4
   \alpha ^2-2 \alpha -6\right) \lambda +\left(-8 \alpha ^3+40 \alpha ^2-66
   \alpha +36\right) \rho -12 \alpha +18\right)+4\right).\notag 
\end{eqnarray}

%%%%%%%%%%%%%%%%%%%%%%%%%%%%%%%%%%%%%%%%%%%%%%%%%%%%
\end{appendices}
%%%%%%%%%%%%%%%%%%%%%%%%%%%%%%%%%%%%%%%%%%%%%%%%%%%%

%\clearpage

\bibliographystyle{plain}
\bibliography{sn-bibliography-ADLHC0R}

\end{document}